\documentclass{article}

\usepackage{amsmath,amsthm,amsfonts}

\usepackage{fullpage}

\input{xy}
\xyoption{all}
\xyoption{matrix}

\def\cl#1{{\langle #1\rangle}}
\def\TO{\Longrightarrow}
\def\abel{\hbox{\footnotesize abelian}}
\def\nonabel{\hbox{\footnotesize non abel}}
\def\w{\wedge}

\def\om{\omega}
\def\ra{\overline}
\def\Aut{\mathrm{Aut}}
\def\Ker{\mathrm{Ker\ }}
\def\GL{\mathrm{GL}}
\def\SU{\mathrm{SU}}
\def\SL{\mathrm{SL}}
\def\tr{\mathrm{tr}}
\def\dimm{\hbox{{\footnotesize dim}}}

\def\g{\mathfrak{g}}
\def\r{\mathfrak{r}}
\def\h{\mathfrak{h}}

\def\su{\mathfrak{su}}
\def\sl{\mathfrak{sl}}
\def\aff{\mathfrak{aff}}
\def\D{\mathcal{D}}
\def\Z{\mathcal{Z}}
\def\R{\mathbb{R}}
\def\K{\mathbb{K}}
\def\C{\mathbb{C}}
\def\ie{{\em i.e. }}

\def\ad{\mathrm{ad}}

\theoremstyle{plain} 
\newtheorem{theorem}{Theorem}[section] 
\newtheorem{lemma}[theorem]{Lemma} 
\newtheorem{proposition}[theorem]{Proposition} 
\newtheorem{corollary}[theorem]{Corollary} 
\newtheorem{remark}[theorem]{Remark}
\date{}

\begin{document}
\title{Three dimensional real  Lie bialgebras}

\author{Marco A. Farinati
\thanks{mfarinat@dm.uba.ar. Member of CONICET. Partially supported by UBACyT X051 and PICT
2006-00836. } 
and A. Patricia Jancsa
\thanks{pjancsa@dm.uba.ar. Partially supported by UBACyT X051 and PICT 2006-00836. } 
}
 
\maketitle
\begin{abstract}
We classify all real three dimensional Lie bialgebras. In each case, their automorphism group
as Lie bialgebras is also given.
\end{abstract}

\section*{Introduction}

Our goal is to classify the {\em real} three dimensional Lie bialgebras. 
Recall that a Lie bialgebra over a field $\K$ is a triple $(\g,[-,-],\delta)$
where $(\g,[-,-])$ is a Lie algebra over $\K$
 and $\delta:\g\to\Lambda^2\g$
is such that 
\begin{itemize}\itemsep=-0.5ex
 \item  the induced map $\delta^*:\Lambda^2\g^*\subseteq(\Lambda^2\g)^*\to \g^*$
is a Lie algebra structure on $\g^*$, 
 \item $\delta:\g\to\Lambda^2\g$ is a 1-cocycle in the Chevalley-Eilenberg
complex of the Lie algebra $(\g,[-,-])$ with coefficients in $\Lambda^2\g$.
\end{itemize}
The Jacobi condition for $\delta^*$ is called co-Jacobi condition.
A Lie bialgebra is said factorizable,
if there exist $r\in\g\otimes \g$ such that $\delta (x)=\ad _x(r)$ $\forall x\in\g$,
$r$ satisfies the classical Yang-Baxter equation
and the symmetric component of $r$ induces a nondegenerate inner product on $\g ^*$. According to \cite{AJ}, a real Lie bialgebra is said almost factorizable if the complexification is factorizable.

In \cite{G}, the author gives a classification of three dimensional Lie bialgebras,
but for example, in the $\sl(2,\R)$ case, we find differences between his result and our.
Namely, we find for $\sl(2,\R)$, three families of isomorphism classes of Lie bialgebras, apart from the
 co-abelian
(see section \ref{seccionsl2} for the details).
Our first case is a 1-parameter family: the cobracket is a positive multiple 
of $\ad(r)$ with $r=y\w x$ in the standard basis $\{h,x,y\}$. The multiple can always
be chosen positive because we prove that $(\g,\delta)$ and $(\g,-\delta)$ give isomorphic
Lie bialgebras in this case, while in \cite{G} they appear as non isomorphic.
 Our second 1-parameter family coincides with the one in \cite{G}.
On the other hand, for the third type,
$r_{\pm}=\pm \frac12 h\w x$ give two non-isomorphic (triangular) Lie bialgebras
on $\sl(2,\R)$, unlike the situation in \cite{G} where only one possibility of sign is 
found.

In \cite{RA-H-R}  all the $r$-matrices for real 3-dimensional Lie algebras are computed;
but for 3-dimensional solvable Lie algebras $H^1(\g,\Lambda^2\g)$ is not trivial.
 Besides, in our work, we distinguish the isomorphism classes of Lie bialgebras.
Although we do not find explicitly the $r$-matrices in the cases of coboundary Lie bialgebras,
it is not hard to compute them. We compute 
 $(\Lambda ^2\g) ^\g$ and all the 1-cocycles, which imply the computation of
$\dim H^1(\g,\Lambda^2 \g)$ (see table below), since the space of coboundaries is isomorphic to 
$\Lambda ^2 \g/(\Lambda ^2 \g)^\g$.

\begin{center}
{\em \small Dimension of $H^1(\g,\Lambda^2\g)$ for real 3-dimensional Lie algebras}
\end{center}
\[
\begin{array}{|c|c|c|c|c|c|c|c|c|c|}
\hline
\g&\h_3&\r_3&\r_{3,\lambda}&\r_{3,\lambda}&\r_{3,\lambda}
&\r_{3,\lambda}'&\r_{3,\lambda}'&\su(2)& \sl(2,\R)\\
&&&{}^{\lambda\neq \pm1}&{}^{\lambda=-1}&{}^{\lambda=1}
&{}^{\lambda\neq 0}&{}^{\lambda=0}&&\\
\hline
\dimm(\Lambda^2\g)^\g &2&0&0&1&0&0&1&0&0\\
\dimm( \hbox{1-{\footnotesize coboundaries}})&1&3&3&2&3&3&2&3&3\\
\dimm( \hbox{1-{\footnotesize cocycles}})&6&4&4&4&6&4&4&3&3\\
\dimm (H^1(\g,\Lambda^2\g))&5&1&1&2&3&1&2&0&0\\
\hline
\end{array}
\]

Our method is direct: we fix a Lie algebra structure,  find all possible
1-cocycles,  solve the co-Jacobi condition and let the Lie algebra
automorphisms group  act on the set of solutions; in this way we find 
simultaneously the isoclasses of Lie bialgebras and its automorphism group 
as Lie bialgebras.
Our main result is the complete classification of the real 3-dimensional Lie bialgebras, which is given case by case in each section; the Lie bialgebras automorphisms groups 
are given as well.

Both authors thank M.L. Barberis for helpful comments 
 on the automorphism groups   and very  fruitful discussions.


\section{General results}

{\bf The center}.
Given a Lie bialgebra $(\g,[-,-],\delta)$, if
one fixes the Lie algebra structure and varies $\delta$, the 1-cocycle condition
can be viewed as a set of linear equations in the
matrix coefficients of the cobracket. 
Anyway, in some cases, the
following property simplifies computations:

\begin{proposition}\label{center}
 Let $\g$ be a Lie algebra and $\delta:\g\to\Lambda^2\g$ a 1-cocycle, 
then $\delta(\Z \g)\subseteq (\Lambda^2\g)^\g$.
\end{proposition}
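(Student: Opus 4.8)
The plan is to unpack the definitions of the two objects involved and show that the cocycle condition, when restricted to a central element, forces the image to be invariant. Recall that $\delta:\g\to\Lambda^2\g$ being a 1-cocycle in the Chevalley--Eilenberg complex with coefficients in the $\g$-module $\Lambda^2\g$ means precisely that, for all $x,y\in\g$,
\[
\delta([x,y]) = x\cdot\delta(y) - y\cdot\delta(x),
\]
where the dot denotes the adjoint action of $\g$ on $\Lambda^2\g$, namely $x\cdot(a\w b)=[x,a]\w b + a\w[x,b]$ (i.e.\ $x\cdot \xi = \ad_x(\xi)$). The target $(\Lambda^2\g)^\g$ is the subspace of invariants, $\{\xi\in\Lambda^2\g : x\cdot\xi=0\ \forall x\in\g\}$.

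First I would take an arbitrary central element $z\in\Z\g$ and an arbitrary $y\in\g$, and apply the cocycle identity to the pair $(z,y)$. Since $z$ is central, $[z,y]=0$, so the left-hand side $\delta([z,y])$ vanishes. This yields
\[
0 = z\cdot\delta(y) - y\cdot\delta(z),
\]
which rearranges to $y\cdot\delta(z) = z\cdot\delta(y)$. The second key observation is that the central element $z$ acts trivially on $\Lambda^2\g$: for any $a\w b$, we have $z\cdot(a\w b)=[z,a]\w b + a\w [z,b]=0$ because $z$ commutes with everything. Hence $z\cdot\delta(y)=0$, and the displayed equation collapses to $y\cdot\delta(z)=0$.

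Since $y\in\g$ was arbitrary, this shows $\delta(z)$ is annihilated by the whole adjoint action, i.e.\ $\delta(z)\in(\Lambda^2\g)^\g$. As $z\in\Z\g$ was arbitrary, we conclude $\delta(\Z\g)\subseteq(\Lambda^2\g)^\g$, as desired. I do not anticipate a genuine obstacle here: the argument is a direct two-line computation once the module structure on $\Lambda^2\g$ is spelled out. The only point demanding a little care is to be explicit that the \emph{same} centrality of $z$ is used twice --- once to kill the bracket $[z,y]$ on the left-hand side, and once to kill the action term $z\cdot\delta(y)$ --- so that both contributions involving $z$ drop out and leave the pure invariance statement for $\delta(z)$.
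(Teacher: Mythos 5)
Your proof is correct and follows exactly the paper's argument: apply the cocycle identity to a pair consisting of a central element and an arbitrary element, use centrality once to kill $\delta([z,y])$ and once to kill the action term $z\cdot\delta(y)$ (the paper writes this as $[\delta x,z]=0$ since $[z,\Lambda^2\g]=0$), leaving the invariance condition $y\cdot\delta(z)=0$ for all $y$. No differences worth noting beyond notation.
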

\begin{proof}
 Let $z\in\Z\g$ and $x\in\g$ arbitrary, the 1-cocycle condition reads
\[
 \delta[x,z]=[\delta x,z]+[x,\delta z]
\]
But $z\in\Z\g$ implies $[x,z]=0$ for any $x$, and also $[z,\Lambda^2\g]=0$, so we conclude
$[x,\delta z]=0$, $ \forall x\in \g$,
namely, $\delta z\in(\Lambda^2\g)^\g$.
\end{proof}
The above proposition will be useful when
$\Z\g$ is ``big'' and $(\Lambda^2\g)^\g$ ``small''. So, 
it will be useful
to start computing the center and the invariant part or $\Lambda^2\g$.

\

\noindent {\bf The derived ideal $ [\g, \g ]$\label{sgg}}.
Recall that a coideal in a Lie bialgebra $\g$ is a subspace $V\subseteq \g$ such that  $\delta V\subseteq V\wedge \g$.
Such a subspace occurs as kernel of a Lie coalgebra map.
The 1-cocycle condition for $\delta$ implies the following:

\begin{proposition} 
\label{gg}
Let $(\g,\delta)$ be a Lie bialgebra, then $[\g,\g]$ is a coideal. In particular, 
the quotient $\g / [\g, \g ]$ admits a unique Lie bialgebra structure such that the canonical projection is a Lie bialgebra map. 
Moreover, if  $(\g,\delta _1)\cong (\g,\delta _2)$ as Lie bialgebras, then 
$(\g /[\g,\g],\ra{\delta }_1)\cong (\g /[\g,\g],\ra{\delta} _2)$.
\end{proposition}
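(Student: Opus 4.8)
The plan is to treat $[\g,\g]$ simultaneously as the usual Lie ideal and as a coideal, so that the quotient inherits both structures compatibly, and then to transport these observations along the canonical projection $\pi\colon\g\to\g/[\g,\g]$.

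First I would establish that $[\g,\g]$ is a coideal. Since $[\g,\g]$ is spanned by brackets and $\delta$ is linear, it suffices to show $\delta[x,y]\in[\g,\g]\w\g$ for all $x,y$. The 1-cocycle condition gives $\delta[x,y]=[\delta x,y]+[x,\delta y]$, where each summand is, up to sign, the natural action of an element of $\g$ on $\Lambda^2\g$. Writing such an action via the Leibniz rule $[x,a\w b]=[x,a]\w b+a\w[x,b]$, every resulting term has at least one tensor factor lying in $[\g,\g]$; hence both $[\delta x,y]$ and $[x,\delta y]$ belong to $[\g,\g]\w\g$, and so does $\delta[x,y]$. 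This is exactly the coideal condition.

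Next I would build the quotient Lie bialgebra. The kernel of $\Lambda^2\pi\colon\Lambda^2\g\to\Lambda^2(\g/[\g,\g])$ is precisely $[\g,\g]\w\g$, so the coideal property just proved says $\Lambda^2\pi\circ\delta$ vanishes on $[\g,\g]$ and therefore descends to a unique map $\ra\delta\colon\g/[\g,\g]\to\Lambda^2(\g/[\g,\g])$ with $\ra\delta\circ\pi=\Lambda^2\pi\circ\delta$; uniqueness, together with the assertion that $\pi$ is a coalgebra map, is forced by surjectivity of $\pi$. The 1-cocycle condition for $\ra\delta$ is then automatic, because $\g/[\g,\g]$ is abelian and its adjoint action on $\Lambda^2(\g/[\g,\g])$ vanishes. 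The only substantive point is co-Jacobi, which I would verify by dualizing: the coideal condition $\delta([\g,\g])\subseteq[\g,\g]\w\g$ is equivalent to the annihilator $[\g,\g]^{\perp}\cong(\g/[\g,\g])^{*}$ being a Lie subalgebra of $(\g^{*},\delta^{*})$, and the cobracket dual to this subalgebra bracket is exactly $\ra\delta$; co-Jacobi for $\ra\delta$ is then merely the Jacobi identity inherited by a subalgebra. I expect this dualization to be the main obstacle — not because it is hard, but because checking that the coalgebra axioms survive passage to the quotient is where the bookkeeping concentrates, and the dual picture is what makes it transparent.

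Finally I would show the induced isomorphism class on the quotient is well defined. Any Lie bialgebra isomorphism $\Phi\colon(\g,\delta_1)\to(\g,\delta_2)$ is in particular a Lie algebra automorphism, hence $\Phi([\g,\g])=[\g,\g]$ and $\Phi$ descends to a linear automorphism $\ra\Phi$ of $\g/[\g,\g]$ satisfying $\ra\Phi\circ\pi=\pi\circ\Phi$; this $\ra\Phi$ is automatically a Lie algebra map. For the cobracket I would chase the identities $\ra\delta_i\circ\pi=\Lambda^2\pi\circ\delta_i$ and $\delta_2\circ\Phi=\Lambda^2\Phi\circ\delta_1$ to obtain $\ra\delta_2\circ\ra\Phi\circ\pi=\Lambda^2\ra\Phi\circ\ra\delta_1\circ\pi$, and then cancel $\pi$ on the right by surjectivity to conclude $\ra\delta_2\circ\ra\Phi=\Lambda^2\ra\Phi\circ\ra\delta_1$. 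Thus $\ra\Phi$ is a Lie bialgebra isomorphism $(\g/[\g,\g],\ra\delta_1)\cong(\g/[\g,\g],\ra\delta_2)$, as required.
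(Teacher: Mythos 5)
Your proof is correct. The paper in fact gives no explicit proof of this proposition---it is stated as an immediate consequence of the 1-cocycle condition---and your argument supplies precisely the details the paper leaves implicit: the Leibniz-rule computation showing $\delta[x,y]\in[\g,\g]\w\g$, the factorization through $\Lambda^2\pi$, and the identification of $\ra{\delta}$ with a Lie algebra structure on $(\g/[\g,\g])^*\cong[\g,\g]^{\perp}\subseteq\g^*$, which is exactly the duality alluded to in the paper's remark immediately following the proposition.
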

Notice that the Lie algebra structure on $\g/[\g,\g]$ is trivial, so a Lie bialgebra structure on
$\g/[\g,\g]$ is equivalent to an usual Lie algebra structure on
$(\g/[\g,\g])^*$.


\begin{lemma}\label{lgg} Let $\g$ be a Lie algebra and $\psi :\g\to\g$ a Lie algebra automorphism, then 
$\psi$ induces Lie algebra morphisms $\psi |_{[\g,\g]} :[\g,\g]\to[\g,\g]$ and
 $\ra{\psi }:\g /[\g,\g]\to\g /[\g,\g]$. The applications $\Aut(\g)\to\Aut([\g,\g])$ and
$\Aut(\g)\to\Aut(\g /[\g,\g])$ defined by $\psi \mapsto\psi |_{[\g,\g]}$ and
$\psi \mapsto\ra{\psi}$ are group homomorphisms.
\end{lemma}

\begin{remark}
Proposition \ref{gg} says that by trivializing the bracket one gets a quotient Lie bialgebra.
 The dual statement of Proposition \ref{gg} is about a subobject of $\g$ instead of a quotient:
$\Ker\delta$ is a Lie subalgebra (due to the 1-cocycle condition) and it is 
obviously maximal with respect to the property of having trivial cobracket. If
$\g_1\cong \g_2$ are two isomorphic Lie bialgebras, then
$\Ker\delta_1 \cong \Ker\delta_2$ as Lie algebras and also as  bialgebras with trivial cobracket.
\end{remark}

\noindent 
{\bf The characteristic bi-derivation}.
Let $(\g,[-,-],\delta)$ be a Lie bialgebra. 
The characteristic endomorphism $\D:\g\to \g$ given by 
\[
\xymatrix{
\g\ar[r]^{\delta}\ar@<-0.5ex>@/_1pc/[rr]_{\D:=[-,-]\circ\delta}&\Lambda^2\g\ar[r]^{[-,-]}&\g
}
\]
is clearly preserved by Lie bialgebra isomorphisms. Namely, if $\phi:\g\to\g'$ is a Lie bialgebra isomorphism
and $\D_{\g}$ and $\D_{\g'}$ denote the endomorphism associated to $\g$ and $\g'$ respectively,
then $\D_{\g'}=\phi\D_{\g}\phi^{-1}$. As a consecuence, any function in $\D$,
which is invariant under conjugation, provides an invariant of the isomorphism class of the Lie bialgebra.
For example, $\det(\D)$ and  $\tr(\D)$ are (real)  numerical invariants. The characteristic polynomial
of $\D$ and its Jordan form are also invariants.
Lie bialgebras  $\g$ such that $\D_\g=0$ are called {\bf involutive}, but, in many cases $\D$ is far from being zero.
The following proposition is standard. 
\begin{proposition}
 Let $\g$ be a Lie bialgebra and $\D=[-,-]\circ\delta$. Then $\D$ is a derivation
with respect to the bracket and a coderivation with respect to the cobracket.
\end{proposition}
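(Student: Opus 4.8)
The plan is to prove the two assertions separately: the derivation property follows directly from the 1-cocycle condition together with the Jacobi identity, and the coderivation property is then obtained for free by a duality argument, avoiding a second explicit computation.

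For the derivation property I would start from the 1-cocycle identity $\delta[x,y]=[\delta x,y]+[x,\delta y]$, where on $\Lambda^2\g$ the symbol $[z,-]$ denotes the adjoint action $\ad_z$ extended as a derivation, so that $[z,a\w b]=[z,a]\w b+a\w[z,b]$ and, in particular, $[\delta x,y]=-\ad_y(\delta x)$. Composing both sides with the bracket map $[-,-]:\Lambda^2\g\to\g$ gives $\D[x,y]=[-,-]\big([\delta x,y]+[x,\delta y]\big)$, and the crux is to simplify each summand. Writing $\delta y=\sum_j c_j\w d_j$, the term $[-,-][x,\delta y]=\sum_j\big([[x,c_j],d_j]+[c_j,[x,d_j]]\big)$ collapses, by the Jacobi identity, to $\sum_j[x,[c_j,d_j]]=[x,\D y]$. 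Symmetrically, writing $\delta x=\sum_i a_i\w b_i$, a second application of Jacobi turns $[-,-][\delta x,y]$ into $\sum_i[[a_i,b_i],y]=[\D x,y]$. Adding the two identities yields exactly $\D[x,y]=[\D x,y]+[x,\D y]$.

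For the coderivation property I would invoke the dual Lie bialgebra rather than repeat a dual computation with the co-Jacobi condition. Recall that $(\g^*,\delta^*,[-,-]^*)$ is again a Lie bialgebra: the co-Jacobi condition for $\g$ is precisely the Jacobi identity for $\delta^*$, while the compatibility condition is self-dual, so $[-,-]^*$ is a 1-cocycle for $(\g^*,\delta^*)$. Under the identification $(\Lambda^2\g)^*\cong\Lambda^2\g^*$, the characteristic endomorphism of the dual is the transpose of the original, $\D_{\g^*}=(\D_\g)^*$. By the derivation part applied to $\g^*$, the map $(\D_\g)^*$ is a derivation of the bracket $\delta^*$; dualizing this identity and using that the transpose of a derivation of a Lie algebra is a coderivation of the dual Lie coalgebra shows that $\D_\g$ is a coderivation with respect to $\delta$.

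I expect the main obstacle to be bookkeeping rather than ideas. One must fix once and for all the sign conventions for $[\delta x,y]$ on $\Lambda^2\g$ and carry them correctly through both Jacobi collapses, and one must verify the identification $\D_{\g^*}=(\D_\g)^*$ together with the self-duality of the cocycle condition, where the isomorphism $(\Lambda^2\g)^*\cong\Lambda^2\g^*$ introduces the only delicate signs. Once these conventions are pinned down, both the Jacobi simplification and the duality step are routine, consistent with the statement being standard.
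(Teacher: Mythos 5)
Your proof is correct and follows essentially the same route as the paper: the derivation property is obtained by applying $[-,-]$ to the 1-cocycle identity and collapsing both summands via the Jacobi identity (your explicit sums $\sum_i a_i\w b_i$ are just the paper's Sweedler-type notation), and the coderivation property is deduced by dualization. The only difference is that you spell out the dualization step---identifying $\D_{\g^*}=(\D_\g)^*$ and using the self-duality of the cocycle condition---which the paper dismisses in one line, so your write-up is if anything more complete.
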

\begin{proof}
 We will prove that $ \D$ is a derivation, the second claim follows by dualization.
Let us write the 1-cocycle condition using Sweedler-type notation:
$
\delta[x,y]=[\delta x,y]+[x,\delta y] =[x_1\wedge x_2,y]+[x,y_1\wedge y_2]
=[x_1,y]\wedge x_2 +x_1\wedge [x_2,y]+[x,y_1]\wedge y_2+y_1\wedge [x, y_2]$.
Then it follows that
$
 \D[x,y]
=[[x_1,y],x_2] +[x_1,[x_2,y]]+[[x,y_1], y_2]+[y_1,[x, y_2]]
$. 
Using Jacobi identity and the definition of $\D$, we get
\[ \D[x,y]
=[[x_1,x_2],y] +[x,[y_1, y_2]]
=[\D x,y]+[x,\D y]
\]
\end{proof}

\section{Two dimensional Lie bialgebras\label{dim2}}

In a similar way that one proves that there are only two isoclasses of Lie algebras of dimension 2, an 
easy manipulation of basis
shows that the following list exhausts the isoclases of two dimensional Lie bialgebras. The
structure is given in a basis $\{h,x\}$ of $\g$.
\begin{center}{\em {\small{2-dimensional Lie bialgebras isomorphism classes}}}\end{center}
\[
\begin{array}{|c|c|c|c|c|c|c|}
\hline
\g&\g^*&[-,-]&\delta&\hbox{Invariants}&\hbox{Name}&\tr(\D)\\
\hline
\abel &\abel&0&0&&&0\\
\hline
\abel      &\nonabel&0       &\delta h=x\wedge h; \delta x=0&&&0\\
\hline
\aff(\R)& \abel    &[h,x]=x& 0 & &&0\\
\hline
\aff(\R)& \nonabel&[h,x]=x&\delta h=h\wedge x;
\delta x=0
&\Ker\delta=[\g,\g];&\aff_{2,0}&\\
&&&&\delta=\partial r, r=h\wedge x&&0\\
\hline
\aff(\R)& \nonabel&[h,x]=x&\delta h=0;
\mu\neq 0&\Ker\delta\neq [\g,\g]&\aff_{2,\mu}&\mu\\
&&& \delta x=\mu h\wedge x;
&&&\\
\hline
\end{array}
\]
where $\aff(\K)$ is the non-abelian 2-dimensional  Lie algebra over $\K$.

The first four lines are clearly non isomorphic among them, 
and non isomorphic to any of the last line.
Finally, thanks to the invariant given by the trace of
the characteristic derivation, one sees that they are not isomorphic
to each other for different $\mu$.
The same table is valid for any field $\K$, replacing $\aff(\R)$
by $\aff(\K)$.

\begin{remark}A similar table appears in \cite{K-S}, but without the parameter $\mu$, which can not be
eliminated, because $\tr(\D)$ is an invariant  of the Lie bialgebra.
\end{remark}
Simply by inspection,  notice the following:
\begin{proposition}If $(\g,[-,-],\delta)$ is a Lie bialgebra with $\dim\g=2$ and 
$\D=[-,-]\circ\delta$, then, whithin the non abelian and non co-abelian cases, 
$\tr(\D)$ is a total invariant. With notation as in the table, $\aff_{2,\mu}\cong
\aff_{2,\mu'}$ if and only if $\mu=\mu'$.
\end{proposition}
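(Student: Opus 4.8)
The plan is to combine the invariance of $\tr(\D)$ under Lie bialgebra isomorphisms, already recorded for the general characteristic endomorphism, with the explicit trace values in the last column of the table. Since any Lie bialgebra isomorphism $\phi$ satisfies $\D_{\g'}=\phi\,\D_\g\,\phi^{-1}$, the number $\tr(\D)$ is conjugation invariant and hence an isomorphism invariant; this settles the ``only if'' direction immediately, because $\aff_{2,\mu}\cong\aff_{2,\mu'}$ would force $\mu=\tr(\D)=\mu'$.

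First I would compute $\D=[-,-]\circ\delta$ in the basis $\{h,x\}$ for each structure in the non-abelian, non-co-abelian rows. For $\aff_{2,\mu}$ with $\mu\neq0$ one has $\delta h=0$ and $\delta x=\mu\,h\w x$, so $\D h=0$ and $\D x=\mu[h,x]=\mu x$, giving $\tr(\D)=\mu$; for $\aff_{2,0}$ one has $\delta h=h\w x$ and $\delta x=0$, so $\D h=x$, $\D x=0$ and $\tr(\D)=0$. Thus $\mu\mapsto\tr(\D)$ is the identity on the whole family and distinct parameters give distinct traces. Because the table exhausts the isoclasses of two-dimensional Lie bialgebras and the non-abelian, non-co-abelian ones are exactly $\{\aff_{2,\mu}\}_{\mu\in\R}$, the trace separates all classes in this subclass, which is precisely the assertion that $\tr(\D)$ is a total invariant; the ``if'' direction is trivial since equal parameters give literally equal structures.

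The only substantive input is the trace computation together with the completeness of the classification. If one prefers an argument independent of that completeness, the main work is to describe $\Aut(\aff(\R))$: the Lie algebra condition $\phi[h,x]=[\phi h,\phi x]$ forces $\phi h=h+bx$ and $\phi x=dx$ with $d\neq0$. Imposing the cobracket compatibility $\delta'\circ\phi=(\phi\w\phi)\circ\delta$ between $\aff_{2,\mu}$ and $\aff_{2,\mu'}$ (both with nonzero parameter) then yields $b\mu'=0$ from the equation on $h$ and $d\mu'=\mu d$ from the equation on $x$, whence $\mu=\mu'$; the case $\mu=0$ is separated from the $\mu\neq0$ ones either by the same trace or by the invariant $\Ker\delta$ versus $[\g,\g]$ noted in the table. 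I expect no genuine obstacle beyond these routine verifications.
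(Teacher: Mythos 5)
Your proposal is correct, and its main line is exactly the paper's argument: the paper proves the proposition ``simply by inspection,'' meaning the conjugation-invariance $\D_{\g'}=\phi\,\D_\g\,\phi^{-1}$ established in the section on the characteristic bi-derivation, the trace values $\tr(\D)=\mu$ recorded in the last column of the table (your computations $\D h=0$, $\D x=\mu x$ for $\aff_{2,\mu}$, $\mu\neq 0$, and $\D h=x$, $\D x=0$ for $\aff_{2,0}$ match the matrices $\D_\mu$ and $\D$ displayed after the proposition), and the completeness of the table, which the paper also only asserts (``an easy manipulation of basis shows that the following list exhausts the isoclases'').

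Your third paragraph goes beyond what the paper writes down, and it is a worthwhile addition: by describing $\Aut(\aff(\R))$ explicitly ($\phi h=h+bx$, $\phi x=dx$, $d\neq 0$) and imposing $\delta'\circ\phi=(\phi\w\phi)\circ\delta$, you obtain $b\mu'=0$ and $d\mu'=\mu d$, hence $\mu=\mu'$, directly. This makes the statement ``$\aff_{2,\mu}\cong\aff_{2,\mu'}$ iff $\mu=\mu'$'' independent of the completeness of the classification, whereas both the paper's argument and your first route need completeness to conclude that $\tr(\D)$ is a \emph{total} invariant (i.e., that equal traces imply isomorphism). Your computation is also consistent with the automorphism groups the paper lists right after the proposition: for $\mu\neq 0$ the constraint $b\mu=0$ forces $b=0$, which is precisely the group $\left\{\left(\begin{smallmatrix}1&0\\ 0&d\end{smallmatrix}\right):d\neq 0\right\}$ given there. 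In short: same primary approach as the paper, plus a self-contained verification of the second claim that the paper leaves implicit.
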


\begin{corollary}
Let $a,b,c,d\in\R$ such that $(a,b)\neq (0,0)$ and $(c,d)\neq (0,0)$; consider
$\g_{abcd}$ the Lie
bialgebra  given by
$
[h,x]=ah+bx,\
\delta h=ch\w x,\ \delta x=dh\w x$, 
then $\g_{abcd}\cong \aff_{2,\mu}(\R)$ with $\mu=ac+bd$.
\end{corollary}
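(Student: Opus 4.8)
The plan is to deduce the statement from the preceding proposition, which says that within the non-abelian, non-co-abelian class the trace of the characteristic endomorphism $\D=[-,-]\circ\delta$ is a total invariant and that the representatives of that class are exactly the $\aff_{2,\mu}(\R)$, with $\mu=\tr(\D)$. So it suffices to check that $\g_{abcd}$ really lands in that class and then to read off its trace; I do not expect to need an explicit change of basis.

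First I would confirm that $\g_{abcd}$ is a genuine Lie bialgebra. Since $\dim\g=2$, both the Jacobi identity for $[-,-]$ and the co-Jacobi identity for $\delta^*$ are automatic: the relevant Jacobiator is alternating and trilinear, hence vanishes on a two-dimensional space. So only the 1-cocycle condition remains. Because $\Lambda^2\g=\R\,(h\w x)$ is one-dimensional, this reduces to the single identity $\delta[h,x]=[\delta h,x]+[h,\delta x]$; expanding both sides from $[h,x]=ah+bx$, $\delta h=ch\w x$, $\delta x=dh\w x$, using the conventions of the derivation lemma for the action of $[-,-]$ on $\Lambda^2\g$ together with $h\w h=x\w x=0$, yields $(ac+bd)\,h\w x$ on each side, so the cocycle condition holds for all values of the parameters. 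Next I would locate $\g_{abcd}$ in the correct case: since $(a,b)\neq(0,0)$ the bracket $[h,x]=ah+bx$ is nonzero, so $(\g,[-,-])\cong\aff(\R)$ is non-abelian; since $(c,d)\neq(0,0)$ the cobracket is nonzero, so (in dimension two) $\g^*$ is non-abelian, i.e. $\g_{abcd}$ is non-co-abelian. Hence $\g_{abcd}$ belongs exactly to the family treated by the proposition.

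Finally I would compute the invariant. From $\D h=[-,-](ch\w x)=c(ah+bx)$ and $\D x=[-,-](dh\w x)=d(ah+bx)$, the matrix of $\D$ in the basis $\{h,x\}$ is $\begin{pmatrix} ca & da \\ cb & db\end{pmatrix}$, so $\tr(\D)=ac+bd$. As $\aff_{2,\mu}(\R)$ satisfies $\tr(\D)=\mu$ and the trace is a total invariant in this class, I conclude $\g_{abcd}\cong\aff_{2,\mu}(\R)$ with $\mu=ac+bd$ (the value $\mu=0$ being permitted, and corresponding to $\aff_{2,0}$). There is no genuine conceptual obstacle here; the only points requiring care are keeping the sign conventions straight when acting with $[-,-]$ on $\Lambda^2\g$ in the cocycle check, and invoking the completeness of the dimension-two classification table so that the proposition's total-invariant statement applies to $\g_{abcd}$.
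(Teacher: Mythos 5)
Your proposal is correct and follows essentially the same route as the paper: observe that $(a,b)\neq(0,0)$ and $(c,d)\neq(0,0)$ place $\g_{abcd}$ in the non-abelian, non-co-abelian class, compute $\D(h)=c[h,x]$ and $\D(x)=d[h,x]$ to get $\tr(\D)=ac+bd$, and invoke the preceding proposition that $\tr(\D)$ is a total invariant there. The only difference is that you additionally verify the 1-cocycle and co-Jacobi conditions, which the paper takes as given in the statement; this is harmless extra care, not a different argument.
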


\begin{proof}Since $(a,b)\neq (0,0)$ and $(c,d)\neq (0,0)$ we are not in the abelian or
co-abelian case. It suffices to compute the trace of $\D$. 
The computations
$\D(h)=c[h,x]$ and  $\D(x)=d[h,x]$ give $\tr(\D)=ac+db$.
\end{proof}

{\bf Automorphisms groups in the non abelian and non co-abelian cases}.
Consider the ordered basis $\{h,x\}$ of $\g$,
then
the Lie bialgebra automorphisms groups in the non abelian  and non co-abelian cases are as follows:

$\bullet$ Case $\g=\aff(\R)$ with $[h,x]=x$ and $\delta h=h\wedge x; \ \delta x=0 $; 
\[
\Aut(\g)=\left\{  
\left(\begin{array}{cc}
1 &0 \\
b &1\\
\end{array}
\right):\ b\in\R
\right \}
\]
\noindent In particular, any of these maps is the exponential of a multiple of \\ 
$
\D=[-,-]\circ\delta=
\left(\begin{array}{cc}
0 &0 \\
1 &0\\
\end{array}
\right)
$. 

$\bullet$ Case $\g=\aff(\R)$ with $[h,x]=x$ and $\delta_\mu h=0;$ $ \delta_\mu x
=\mu h\wedge x $:
\[
\Aut(\g)=\left\{  
\left(\begin{array}{cc}
1 &0 \\
0 &d\\
\end{array}
\right):\ 0\neq d \in\R 
\right \}
\]
\noindent Any of these maps with $d>0$ is the exponential of a multiple of  \\
$
\D_{\mu}=[-,-]\circ\delta_{\mu}=
\left(\begin{array}{cc}
0 &0 \\
0 &\mu\\
\end{array}
\right)
$. 
The fact that the exponential of (a multiple of) the endomorphism $\D$ gives an automorphism
of the Lie bialgebra is not surprising, since we already knew that
$\D$ is a derivation and a coderivation.

\section{Three dimensional real Lie algebras}

\begin{theorem}\cite{GOV}
The following list exhausts the 3-dimensional solvable real Lie algebras:
\[
\begin{array}{rl}
\R^3:&\hbox{ the  three dimensional abelian;}\\
\h_3:& [e_1,e_2]=e_3, \hbox{ the three dimensional Heisenberg;}\\
\r_3:&[e_1,e_2]=e_2,\ [e_1,e_3]=e_2+e_3;\\
\r_{3,\lambda}:&[e_1,e_2]=e_2,\ [e_1,e_3]=\lambda e_3,\ |\lambda|\leq 1;\\
\r'_{3,\lambda}:&[e_1,e_2]=\lambda e_2-e_3,\ [e_1,e_3]=e_2+\lambda e_3,\ \lambda\geq 0.\\
\end{array}\]
Denote $u=\frac{ih}{2}$, $v=\frac{x-y}{2}$, $w=\frac{i(x+y)}{2}$;
the semisimple 3-dimensional real Lie algebras are
\[
\sl(2,\R): [h,x]=2x,\ [h,y]=-2y,\ [x,y]=h
\]
\[
\su(2): [u,v]=w,\ [v,w]=u,\ [w,u]=v.
\]
\end{theorem}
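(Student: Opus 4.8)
This is the classical Gorbatsevich--Onishchik--Vinberg list, cited from \cite{GOV}; the plan I would follow to derive it proceeds by first separating the semisimple and solvable cases and then stratifying the solvable ones by the dimension of the derived ideal. By the Levi decomposition, any real $\g$ is the semidirect product of its radical with a semisimple subalgebra; since there is no nonzero semisimple Lie algebra of dimension $1$ or $2$, a $3$-dimensional $\g$ is either solvable (trivial Levi factor) or semisimple with trivial radical, and in the latter case it is necessarily simple, because the smallest simple Lie algebra already has dimension $3$.

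For the solvable case I would stratify by the invariant $d=\dim[\g,\g]$. If $d=0$, then $\g=\R^3$. If $d=1$, I would write $[\g,\g]=\langle e_3\rangle$ and split according to whether this ideal is central: if it is, the single nonzero structure constant normalizes to give $\h_3$; if it is not, choosing $e_1$ with $\ad(e_1)e_3=e_3$ and completing the basis yields $\aff(\R)\oplus\R$, which is exactly $\r_{3,\lambda}$ at $\lambda=0$. If $d=2$, then $[\g,\g]$ is a $2$-dimensional ideal which must be abelian, since a $2$-dimensional Lie algebra has derived algebra of dimension at most $1$ and solvability forbids $[[\g,\g],[\g,\g]]=[\g,\g]$. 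Hence $[\g,\g]\cong\R^2$ is an abelian ideal and $\g$ is the semidirect product determined by the action of $A=\ad(e_1)|_{[\g,\g]}$ on it; moreover $A$ is invertible, since $\dim[\g,\g]$ equals the rank of $A$.

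The heart of the argument, and the step I expect to be the main obstacle, is the classification of $A$ up to isomorphism of $\g$. Two such semidirect products are isomorphic precisely when the corresponding matrices are conjugate after rescaling $e_1$ by a nonzero real scalar, so the relevant invariant is the real canonical form of $A$ taken up to conjugation and overall scaling. Normalizing in this way produces exactly three mutually exclusive families: two nonzero real eigenvalues, which after scaling and swapping coordinates give $\mathrm{diag}(1,\lambda)$ and hence $\r_{3,\lambda}$ with $\lambda\neq0$; a single nonsplit Jordan block $\bigl(\begin{smallmatrix}1&1\\0&1\end{smallmatrix}\bigr)$, giving $\r_3$; and a pair of complex-conjugate eigenvalues, giving $\bigl(\begin{smallmatrix}\lambda&1\\-1&\lambda\end{smallmatrix}\bigr)$ and hence $\r'_{3,\lambda}$. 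The delicate point is pinning down the sharp ranges $|\lambda|\le1$ and $\lambda\ge0$ so that the list is simultaneously exhaustive and non-redundant: the freedom $\lambda\mapsto1/\lambda$ from swapping the two eigendirections forces $|\lambda|\le1$, while the reflection exchanging the complex-conjugate directions forces $\lambda\ge0$. Scale-invariant data, namely the sign of the discriminant $(\tr A)^2-4\det A$ and the value of $\det A/(\tr A)^2$, then confirm that distinct normalized parameters give non-isomorphic algebras.

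For the semisimple (hence simple) case I would use that the Killing form $\kappa$ is nondegenerate by Cartan's criterion and that its signature is an isomorphism invariant. A $3$-dimensional simple real Lie algebra is a real form of $\sl(2,\C)$, and the two real forms are distinguished by this signature: indefinite for $\sl(2,\R)$ and negative definite for $\su(2)$. The explicit change of basis $u=\frac{ih}{2}$, $v=\frac{x-y}{2}$, $w=\frac{i(x+y)}{2}$ shows that both complexify to $\sl(2,\C)$, while the sign behaviour of $\kappa$ separates them, so they are non-isomorphic and together exhaust the semisimple case.
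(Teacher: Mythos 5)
The paper offers no proof of this theorem: it is imported verbatim from \cite{GOV}, so there is no internal argument to compare yours against, and your reconstruction follows the standard route (Levi decomposition to split the solvable and simple cases, stratification of the solvable case by $d=\dim[\g,\g]$, classification of $A=\ad(e_1)|_{[\g,\g]}$ up to conjugation and rescaling, Killing form signature for the simple case). Most of the structural steps are correct: the solvable/simple dichotomy, the two $d=1$ subcases, the identification $[\g,\g]=\mathrm{Im}(A)$ forcing $A$ invertible, the isomorphism criterion ``conjugate up to a nonzero scalar'', and the normalizations $|\lambda|\le 1$ and $\lambda\ge 0$.

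However, your justification that $[\g,\g]$ is abelian when $d=2$ does not work as stated, and this is the one step that genuinely fails. You argue that a $2$-dimensional Lie algebra has derived algebra of dimension at most $1$ and that solvability forbids $[[\g,\g],[\g,\g]]=[\g,\g]$; but if $[\g,\g]$ were isomorphic to $\aff(\R)$, then $[[\g,\g],[\g,\g]]$ would be $1$-dimensional, the derived series of $\g$ would still reach $0$, and $\g$ would still be solvable, so the two facts you invoke produce no contradiction. Ruling out $[\g,\g]\cong\aff(\R)$ needs a different input: either the theorem (a corollary of Lie's theorem in characteristic zero) that the derived algebra of a solvable Lie algebra is nilpotent, which excludes the non-nilpotent $\aff(\R)$, or a direct computation: writing $[\g,\g]=\langle u,v\rangle$ with $[u,v]=v$, every derivation $D$ of this algebra satisfies $D(u),D(v)\in\langle v\rangle$, so for any $w\notin[\g,\g]$ one gets $[\g,\g]=\ad(w)\bigl([\g,\g]\bigr)+\langle v\rangle=\langle v\rangle$, contradicting $d=2$. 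A second, minor slip: the invariants $(\tr A)^2-4\det A$ (its sign) and $\det A/(\tr A)^2$ do not separate $\r_3$ from $\r_{3,\lambda}$ at $\lambda=1$ (both give discriminant $0$ and ratio $1/4$), and the ratio is undefined when $\tr A=0$; you should instead record the full real canonical form of $A$ up to scaling, which your own reduction already shows is a complete invariant, so the extra numerical invariants are unnecessary as well as insufficient.
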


{\bf Three dimensional real Lie bialgebras: general strategy}.
In order to classify all real three dimensional Lie bialgebras we will procede as follows:
\begin{enumerate}
\item Given a Lie algebra $\g$, we find the general 1-cocycle $\delta:\g\to\Lambda^2\g$.
\item  Determine when $\delta$ satisfies the co-Jacobi identity.
\item Study the action of $\Aut(\g,[-,-])$ on the set of cobrackets $\delta$.
\item Find a set of representatives, hence, the list of isomorphism classes of
Lie bialgebras with underlying Lie algebra $\g$.
\end{enumerate}

To give a Lie bialgebra structure on the abelian Lie algebra $\R^3$ is the same
as giving a Lie algebra structure on $(\R^3)^*$, so the list of
all three dimensional Lie bialgebras with underlying Lie algebra $\R^3$
is in obvious bijection with the list of three dimensional Lie algebras.

Next, we proceed with the other cases: first $\h_3$, the
only 3-dimensional nilpotent and non abelian Lie algebra, secondly the solvable non nilpotent
$\r_3$, $\r_{3,\lambda}$ and $\r'_{3,\lambda}$, and finally the simple 
$\su(2)$ and $\sl(2,\R)$. 

\subsection{The general co-Jacobi condition\label{gcJc}}

If $\g$ is any three dimensional Lie algebra, we will write the structure in terms of basis
$\{x,\ y, \ h\}$ of $\g$ and
$\{x\wedge y,\ y\wedge h, \ h\wedge x\}$ for $\Lambda ^2 (\g)$.
Write, with $a_i$, $b_i$, $c_i\in\R$, $i=1,2,3$,
$\delta x=a_1x\wedge y+a_2y\wedge h+a_3h\wedge x$;\\
$\delta y=b_1x\wedge y+b_2y\wedge h+b_3h\wedge x$;
$\delta h=c_1x\wedge y+c_2y\wedge h+c_3h\wedge x$.

\noindent 
For a linear map $\delta:\g\to\Lambda^2\g$, 
the co-Jacobi condition is equivalent to the equations
\begin{eqnarray*}
-a_1b_2+a_2 (b_1-c_3)+a_3 c_2=0,
\\
b_1 a_3-b_2 c_3+b_3 (-a_1+c_2)=0,
\\
c_1(a_3-b_2)  +c_2 b_1-c_3a_1=0.
\end{eqnarray*}

\section{Lie bialgebra structures on $\h_{3}$}

Recall that the Lie algebra $\h _{3 }$ has a basis $\{x,\ y, \ h\}$, with the relations
$[h,x]=0$, $[h,y]=0$,  $[x,y]=h$. We list general properties of $\h_3$:
\begin{itemize}
\item 
$[\h_3,\h_3]=\R h$; $[\h_3,[\h_3,\h_3]]=0$.
\item $\Z (\h_3)=\R h$, $(\Lambda^2\h_3)^{\h_3}=\R y\wedge h\oplus\R h\wedge x$.
\item The automorphisms group of  $\h _{3 }$
is the following 
subgroup of $\GL (3,\R)$:
\[
 \Aut(\h _3)=\left\{\phi _{\mu,\rho,\sigma,\nu,a,b}  =
\left(\begin{array}{ccc}
\mu &\rho & 0\\
\sigma &\nu &0\\
a &b & \lambda
\end{array}
\right):\ \mu \nu -\rho\sigma =\lambda\neq 0 
\right \}
\]
\end{itemize}

{\bf The 1-cocycle condition}.
Consider the basis $\{x\wedge y,\ y\wedge h, \ h\wedge x\}$ of $\Lambda ^2 (\h_3)$ and
write $\delta$ as in  \ref{gcJc}. Proposition
\ref{center} implies $\delta h=c_2y\wedge h+c_3h\wedge x$, namely 
$c_1=0$. The 1-cocycle condition for $[h,x]$ and $[h,y]$ is the content of the
proof of this proposition, so it gives no further information  in this case.
Besides, the 1-cocycle for $[x,y]=h$ reads $\delta h=\delta[x,y]=[\delta x,y]+[x,\delta y]$,  then 
\begin{eqnarray*}
\delta h&=&c_2y\wedge h+c_3h\wedge x\\
&=&
[a_1x\wedge y+a_2y\wedge h+a_3h\wedge x,y]+
[x,b_1x\wedge y+b_2y\wedge h+b_3h\wedge x]\\
&=&a_1h\wedge y+ b_1x\wedge h
\end{eqnarray*}
so  $c_2=-a_1$ and $c_3=-b_1$.
 Hence, then general  1-cocycle  is 
\[\begin{array}{rcl}
\delta (x)&=&a_1x\wedge y+a_2 y\wedge h+ a_3h\wedge x;\\
\delta (y)&=&b_1x\wedge y +b_2y\wedge h+b_3h\wedge x;\\
\delta (h)&=&
\quad\quad\quad - a_1y\wedge h-b_1 h\wedge x
\end{array}
\]
In matrix notation, 
$\delta =
\left(\begin{array}{ccc}
a_1&b_1&0\\
a_2& b_2 &-a_1\\
a_3& b_3 & -b_1
\end{array}\right).
$


The  co-Jacobi condition (see \ref{gcJc}) restricted to a 1-cocycle in $\h_3$ reduces to
\begin{eqnarray*}
2 a_2 b_1-a_1 (a_3+b_2)&=0,\\
b_1 a_3+ b_1 b_2-2 a_1 b_3&=0,
\end{eqnarray*}
which are not so easy to solve, so we use a dimensional reduction procedure, thanks to the results of section \ref{sgg}.

\subsection{Consequence of the general result for $\h_3 / [\h_3, \h_3 ]$}

\begin{lemma} For $\g =\h_3$, the natural application of Lemma
\ref{lgg}\\
$\Aut(\g)\to\Aut(\g /[\g,\g])$, defined by 
$\psi \mapsto\ra{\psi}$, is a  split epimorphism.
\end{lemma}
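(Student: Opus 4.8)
The plan is to make the homomorphism $\psi\mapsto\ra\psi$ of Lemma \ref{lgg} completely explicit and then exhibit a section by hand. First I would identify the quotient: since $[\h_3,\h_3]=\R h$, the Lie algebra $\h_3/[\h_3,\h_3]$ is the $2$-dimensional abelian algebra with basis $\{\ra x,\ra y\}$ (the images of $x$ and $y$), so $\Aut(\h_3/[\h_3,\h_3])=\GL(2,\R)$. Reading the automorphism $\phi_{\mu,\rho,\sigma,\nu,a,b}$ in the ordered basis $\{x,y,h\}$, the induced map $\ra\psi$ is exactly the top-left block $\left(\begin{smallmatrix}\mu&\rho\\\sigma&\nu\end{smallmatrix}\right)$; its determinant is $\mu\nu-\rho\sigma=\lambda\neq0$, which is consistent with the fact that $h=[x,y]$ must transform by the determinant of that block. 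This identifies the map of the lemma with sending $\phi_{\mu,\rho,\sigma,\nu,a,b}$ to its upper $2\times2$ block.

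Next I would construct a splitting directly. Define $s:\GL(2,\R)\to\Aut(\h_3)$ by $s(A)=\phi_{\mu,\rho,\sigma,\nu,0,0}$ for $A=\left(\begin{smallmatrix}\mu&\rho\\\sigma&\nu\end{smallmatrix}\right)$, i.e. the block-diagonal matrix $\mathrm{diag}(A,\det A)$. Because $\det A=\mu\nu-\rho\sigma\neq0$, this matrix satisfies the defining constraint of $\Aut(\h_3)$, so $s(A)$ is a genuine automorphism; and by construction $\ra{s(A)}=A$. In particular this already shows the map is surjective, so the only remaining point is that $s$ is a group homomorphism.

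To verify the homomorphism property I would compute $s(A)s(B)=\mathrm{diag}(AB,\det A\,\det B)$ and invoke the multiplicativity of the determinant, $\det A\,\det B=\det(AB)$, to conclude $s(A)s(B)=\mathrm{diag}(AB,\det(AB))=s(AB)$. Thus $s$ is a homomorphic section of $\psi\mapsto\ra\psi$, which is therefore a split epimorphism.

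The one genuinely non-automatic point, which I would flag as the main obstacle, is that the naive candidate section $A\mapsto\mathrm{diag}(A,1)$ fails: the constraint $\mu\nu-\rho\sigma=\lambda$ forces the $(h,h)$-entry to equal $\det A$, not $1$, so $\mathrm{diag}(A,1)$ does not lie in $\Aut(\h_3)$ unless $\det A=1$. Placing $\det A$ in the bottom-right corner is exactly what simultaneously makes $s$ land in $\Aut(\h_3)$ and keeps it multiplicative; everything else is routine.
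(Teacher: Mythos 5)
Your proposal is correct and matches the paper's own proof: the paper defines the splitting by sending $\left(\begin{smallmatrix}\mu&\rho\\\sigma&\nu\end{smallmatrix}\right)$ to the block matrix with that block in the upper-left corner and $\mu\nu-\rho\sigma$ in the $(h,h)$-entry, i.e. exactly your $s(A)=\mathrm{diag}(A,\det A)$. Your added remarks (multiplicativity of $\det$ for the homomorphism property, and why $\mathrm{diag}(A,1)$ fails) are correct elaborations of the same construction.
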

\begin{proof}Consider the basis $\{x,y,h\}$, the splitting may be defined as
\begin{eqnarray*}
\Aut(\g /[\g,\g])&\to &\Aut(\g)\\
\phi =
\left(\begin{array}{cc}
\mu &\rho \\
\sigma &\nu 
\end{array}
\right)
&\mapsto &
\widehat{\phi }=
\left(\begin{array}{cc|c}
\mu &\rho & 0\\
\sigma &\nu &0\\
\hline
0 &0 & \mu\nu-\rho\sigma
\end{array}
\right)
\end{eqnarray*}
\end{proof}

We know $\dim( \h_3 /[\h_3, \h_3 ])=2$ 
and  $(\h_3 /[\h_3, \h_3 ])$ is abelian.
According to the  section \ref{dim2}, 
there are only 
 two classes of isomorphisms of 2-dimensional Lie bialgebras with abelian bracket:
 the co-abelian one and the one with $\ra{\delta }(\ra x)=0$ and $\ra{\delta }(\ra y)=\ra{x}\wedge \ra{y}$.
Observe that, in virtue of the form of the Lie algebra automorphims, there is no lost of generality in 
assuming that the basis $\{\ra{x},\ \ra{y}\}$ is the one which allows us to write $\ra{\delta}$ in this form since 
any automorphism of $( \h_3 /[\h_3, \h_3 ])$ may be lifted to an automorphism of $\h_3$.
Explicitly, we may assume $a_1=0$ and there are two possibilities for $b_1$, namely, $b_1=0$ or $b_1=1$.
In matrix notation,
\[
\delta _{b_1=0}=
\left(\begin{array}{ccc}
0&0&0\\
a_2& b_2 &0\\
a_3& b_3 & 0
\end{array}\right)
\ \hbox{ and }\
\delta _{b_1=1}=
\left(\begin{array}{ccc}
0&1&0\\
a_2& b_2 &0\\
a_3& b_3 & -1
\end{array}\right).
\]
Returning to the  co-Jacobi condition with the assumption $a_1=0$, 
it is automatically satisfied in the case $b_1=0$, and
it reduces to $b_2+a_3=0=2a_2$ if $b_1=1$.


\noindent {\em Case $b_1=1$}.
In this case,
$\delta=
\left(
\begin{array}{ccc}
0&1&0\\
0&-a_3&0\\
a_3&b_3&-1
\end{array}\right)
$.
After conjugation 
by $\phi _{\mu, \nu, \rho , \sigma, a, b}$, we get
\[
\delta'=
\frac{1}{\mu \nu - \rho \sigma}
\left(\begin{array}{ccc}
\sigma&\nu&0\\
\frac{ b_3\sigma^2}{\mu\nu - \rho\sigma}&
\frac{a   \nu - a_3\mu\nu  - b\sigma + b_3\nu\sigma + a_3\rho \sigma}
{\mu\nu - \rho \sigma}&-\sigma\\
\frac{-a\nu + a_3\mu\nu  + b\sigma + b_3\nu\sigma - a_3\rho\sigma}
{\mu\nu - \rho\sigma}&\frac{b_3\nu^2 }{\mu\nu - \rho\sigma}&
-\nu\\
\end{array}\right)
\]
If one wants to preserve the condition $a'_1=0$ then it must be $\sigma=0$, so 
$
\delta'=
\left(\begin{array}{ccc}
0&\frac{1}{\mu}&0\\
0&\frac{a   - a_3\mu }{\mu^2\nu }&0\\
\frac{-a + a_3\mu  }{\mu^2\nu}&\frac{b_3 }{\mu^2 }
&-\frac{1}{\mu}\\
\end{array}\right)
$. 
The condition $b'_1=1$ forces $\mu=1$, so, with $\sigma=0$ and $\mu=1$,
$
\delta'=
\left(\begin{array}{ccc}
0&1&0\\
0&\frac{a - a_3 }{\nu }&0\\
\frac{-(a- a_3) }{\nu}&b_3&-1\\
\end{array}\right)
$.\\
Taking an automorphism with $a=a_3$ we get $a_3'=0$, namely
$\delta$ changes into
\[
\delta'=\delta_{b_3}=
\left(\begin{array}{ccc}
0&1&0\\
0&0&0\\
0&b_3&-1\\
\end{array}\right)
\]
An automorphim preserving also $a_3'=0$
must have $a=0$, and in this case $\delta'=\delta$.
Finally, the list of isoclasses of Lie bialgebras is
given by the cobrackets $\{\delta_{b_3}:b_3\in \R\}$ given above. 
For each of these, the automorphism group of Lie bialgebras is
\[
G=
\left\{\phi_{\rho,\nu, b}=
\left(\begin{array}{ccc}
1&\rho&0\\
0&\nu&0\\
0&b&\nu\\
\end{array}\right):\nu\neq 0,b,\rho\in \R\right\}\]

\

\noindent {\em Case $b_1=0$}:
In this situation, co-Jacobi is automatically satisfied.
Let $\delta =
\left(
\begin{array}{ccc}
0&0&0\\
a_2&b_2&0\\
a_3&b_3&0
\end{array}
\right)
$ and let us compute $\delta ':=(\phi\wedge\phi)^{-1}\delta\phi$ with 
$\phi =\phi _{\mu, \nu, \rho , \sigma, a, b}$  then
\[
\delta' =
\frac{1}{(\mu\nu-\rho\sigma)^{2}}
\left(
\begin{array}{ccc}0 &0&0\\
a_2\mu ^2+\sigma \mu (a_3 +b_2)+b_3\sigma ^2&
b_2 \mu \nu+a_2 \mu \rho+b_3 \nu \sigma+a_3 \rho \sigma
 & 0\\
a_2 \mu \rho+b_2 \rho\sigma +b_3 \nu \sigma+a_3 \nu \mu
&  b_3 \nu^2+(a_3+b_2)\rho \nu+a_2 \rho^2 & 0\\
\end{array}
\right)
\]

\noindent Although the matrix $\left(
\begin{array}{cc}
a_2&b_2\\
a_3&b_3
\end{array}\right)
$ does not correspond to a symmetric bilinear form, it changes according to the following rule:
\[
\left(
\begin{array}{cc}
a_2'&b'_2\\
a'_3&b'_3
\end{array}\right)
=
\frac{1}{(\mu\nu-\rho\sigma)^{2}}
\left(
\begin{array}{cc}
\mu&\sigma\\
\rho&\nu
\end{array}\right)
\left(
\begin{array}{cc}
a_2&b_2\\
a_3&b_3
\end{array}\right)
\left(
\begin{array}{cc}
\mu&\rho\\
\sigma&\nu
\end{array}\right)
\]
Namely, it changes as a bilinear form, divided by the square of the determinant $\mu\nu  -\rho\sigma $.
We know that it can be diagonalized; hence, we may assume that, up
to isomorphism, $b_2=a_3=0$.
Also, with an automorphism with $\rho=\sigma=0$, the coefficients change according to the rule
$a'_2=a_2/\nu^2$, $a'_3=0$, $b'_2=0$, $b'_3=b_3/\mu^2$, so the full list of possibilities,
up to isomorphism, are
$a_2=0,\pm1$ and $b=0\pm 1$.
Using the automorphism with $\mu=\nu=0$, $\rho=\sigma=1$ we get $a_2'=b_2$ and $b_2'=a_3$,
so the Lie bialgebra with cobracket with $a_2=1=-b_3$ is isomorphic to the one with cobracket with $a_2=-1=-b_3$.

We also know that the signature is an invariant of bilinear forms and  it is an invariant also in this case,
because the difference between the action on bilinear forms and our case is the multiplication by the square
of the determinant, which is a positive number. Similar consideration holds for the rank.
 We conclude that the list of isomorphism classes is given by $\delta _{a_2, b_3}$ obtained by
choosing the parameters $(a_2,b_3)=(0,0),\ (1,1)$, $ (-1,-1)$, $ (1,-1)$, $ (1,0)$, $ (-1,0)$.
This completes the proof of the  following statement.

\begin{theorem}
For the Lie algebra $\h_3$ the exhaustive list
of  the isomorphism classes of Lie bialgebra structures is parametrized by the following set of cobrackets:
\[\delta _{b_3}=
\left(\begin{array}{ccc}
0&1&0\\
0&0&0\\
0&b_3&0\\
\end{array}\right):b_3\in\R
,\ \hbox{ and }
\delta _{a_2,b_3}=
\left(
\begin{array}{ccc}
0&0&0\\
a_2&0&0\\
0&b_3&0
\end{array}
\right)
\]
with
$(a_2,b_3)=(0,0),\ (1,1)$, $ (-1,-1)$, $ (1,-1)$, $ (1,0)$, $ (-1,0)$.

\end{theorem}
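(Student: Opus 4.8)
The plan is to cut the six-parameter problem down to two tractable normal forms by passing to the abelianization, and then to run the automorphism action on each. Since $[\h_3,\h_3]=\R h$, Proposition \ref{gg} guarantees that $[\h_3,\h_3]$ is a coideal and that any Lie bialgebra isomorphism descends to the two-dimensional abelian quotient $\h_3/[\h_3,\h_3]$; moreover, by the preceding lemma, $\Aut(\h_3)\to\Aut(\h_3/[\h_3,\h_3])$ is a split epimorphism, so every normalizing change of basis downstairs lifts to $\h_3$. By the two-dimensional table of section \ref{dim2}, the induced cobracket $\ra\delta$ is either co-abelian or equivalent to the one with $\ra\delta(\ra x)=0$, $\ra\delta(\ra y)=\ra x\w\ra y$. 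Translated back, this lets me assume $a_1=0$ and either $b_1=0$ or $b_1=1$, splitting the analysis into exactly these two cases.

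First I would dispatch the branch $b_1=1$. Here the two co-Jacobi equations collapse to $a_2=0$ and $b_2=-a_3$, leaving only $a_3,b_3$. I would then restrict the conjugation by $\phi_{\mu,\rho,\sigma,\nu,a,b}$ to those automorphisms preserving the normal form: demanding $a_1'=0$ forces $\sigma=0$, demanding $b_1'=1$ forces $\mu=1$, and a remaining shear with $a=a_3$ clears $a_3$. This leaves the single parameter $b_3$, producing the family $\delta_{b_3}$, and reading off which of these still fix $a_3=0$ (i.e.\ $a=0$) yields the stated automorphism group $G$.

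The substantive branch is $b_1=0$ (together with $a_1=0$), where co-Jacobi holds identically and the whole cobracket is encoded in the $2\times2$ block $\left(\begin{smallmatrix}a_2&b_2\\a_3&b_3\end{smallmatrix}\right)$. The key structural fact, which I would verify by direct conjugation, is that this block transforms as $M\mapsto(\det P)^{-2}P^tMP$ with $P=\left(\begin{smallmatrix}\mu&\rho\\\sigma&\nu\end{smallmatrix}\right)$, that is, by congruence twisted by the positive scalar $(\det P)^{-2}$, the parameters $a,b$ acting trivially. For the symmetric part of $M$ this is ordinary congruence up to a positive factor, so Sylvester's law applies: the rank and the signature are complete invariants (the positive factor rescales eigenvalues without changing signs), and after diagonalizing and rescaling one reaches the representatives $(a_2,b_3)\in\{(0,0),(1,0),(-1,0),(1,1),(-1,-1),(1,-1)\}$; the automorphism with $\mu=\nu=0,\ \rho=\sigma=1$ swaps $a_2\leftrightarrow b_3$ and removes the redundant $(-1,1)$.

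The main obstacle is precisely the completeness of this last step, because $M$ need not be symmetric. Under the twisted congruence the skew part of $M$ does not transform by congruence but merely rescales, $c\mapsto(\det P)^{-1}c$ with $c=\tfrac12(b_2-a_3)$; and $b_2-a_3$ is, up to sign, $\tr(\ad_{h^*})$ on the dual algebra $\g^*$, so it is genuinely isomorphism-invariant and cannot be absorbed into the symmetric normal form. The heart of the argument is therefore to settle exactly when the skew part can be normalized away and to confirm that on the locus where it vanishes the symmetric invariants (rank and signature, stable under the positive scalar) deliver the six classes above. Once the behaviour of $c$ is pinned down, assembling the two branches completes the classification.
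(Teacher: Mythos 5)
Up to its final step, your proposal \emph{is} the paper's proof: the same reduction through Proposition \ref{gg} and the split epimorphism $\Aut(\h_3)\to\Aut(\h_3/[\h_3,\h_3])$ to the normal forms $a_1=0$, $b_1\in\{0,1\}$; the same treatment of the branch $b_1=1$ (co-Jacobi gives $a_2=0$, $b_2=-a_3$; then $\sigma=0$, $\mu=1$, $a=a_3$ produce $\delta_{b_3}$ and the group $G$); and, for $b_1=0$, the same twisted-congruence rule $M\mapsto(\det P)^{-2}P^{t}MP$ for $M=\left(\begin{smallmatrix}a_2&b_2\\ a_3&b_3\end{smallmatrix}\right)$, with rank and signature settling the symmetric case.

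Where you stop, however, is exactly where the paper's argument is wrong, and the question you leave open cannot be resolved in favour of the statement. The paper dismisses non-symmetric $M$ with the sentence ``We know that it can be diagonalized'', which is false: as you observe, the skew part merely rescales, $c\mapsto c/\det P$, so $c\neq0$ can never be removed; and co-Jacobi is automatic on the stratum $a_1=b_1=0$ (the paper says so itself), so such cobrackets genuinely occur. Concretely, $\delta x=h\w x$, $\delta y=-y\w h$, $\delta h=0$, i.e.\ $M=\left(\begin{smallmatrix}0&-1\\ 1&0\end{smallmatrix}\right)$, is a Lie bialgebra structure on $\h_3$. Its dual Lie algebra is $\r_{3,1}$ (one finds $[x^*,y^*]=0$ and $\ad_{h^*}=\mathrm{id}$ on $\langle x^*,y^*\rangle$), whereas the duals of the six $\delta_{a_2,b_3}$ are $\R^3$, $\r'_{3,0}$, $\r_{3,-1}$ and $\h_3$, none isomorphic to $\r_{3,1}$; and it is not isomorphic to any $\delta_{b_3}$ either, since those induce a nonzero cobracket on $\h_3/[\h_3,\h_3]$ while it induces zero (Proposition \ref{gg}). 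So the theorem's list is not exhaustive, and your hoped-for conclusion---that settling the skew part recovers exactly the six classes---is unattainable. Worse, the omission is not a single class: normalizing $c=1$ leaves the residual group $\{\det P=1\}$, under which the determinant of the symmetric part of $M$ is a continuous invariant, so whole one-parameter families such as $M_s=\left(\begin{smallmatrix}s&1\\ -1&s\end{smallmatrix}\right)$, $s\geq 0$, are pairwise non-isomorphic and all missing. (That such structures must exist is visible in the paper's own tables: the cobracket above is precisely the dual of $(\r_{3,1},\ \delta h=x\w y,\ \delta x=\delta y=0)$, which the paper lists.) In short, your write-up is incomplete as a proof, but the step it leaves open is not fillable: what you have actually located is an error in the diagonalization step of the paper's proof and, consequently, in the statement of the theorem itself.
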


\section{Lie bialgebra structures on $\r _{3}$}

Recall  $\r_3$ is the real Lie algebra with basis $\{x,y,h\}$ and Lie brackets
given by \[
          [h,x]=x,\ [h,y]=x+y,\ [x,y]=0\]

\begin{lemma} 
$\Z (\r_3)=0$ and $(\Lambda ^2\r_3)^{\r_3}=0$.
\end{lemma}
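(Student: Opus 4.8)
The plan is to verify both claims by direct computation in the given basis $\{x,y,h\}$, exploiting the very explicit bracket relations $[h,x]=x$, $[h,y]=x+y$, $[x,y]=0$.

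First I would compute the center $\Z(\r_3)$. An arbitrary element is $z=\alpha x+\beta y+\gamma h$, and I would impose $[z,w]=0$ for all $w\in\r_3$; it suffices to test $w=x,y,h$. Computing $[z,x]$, $[z,y]$ and $[z,h]$ using bilinearity and the structure constants, each bracket is a linear expression in $\alpha,\beta,\gamma$. Setting all coefficients to zero yields a homogeneous linear system; I expect it to force $\alpha=\beta=\gamma=0$, giving $\Z(\r_3)=0$. The key qualitative reason is that $h$ acts on the plane $\R x\oplus\R y$ by the matrix $\left(\begin{smallmatrix}1&1\\0&1\end{smallmatrix}\right)$, which has no kernel, so no nonzero combination of $x,y$ is central, and $h$ itself is not central since $[h,x]=x\neq0$.

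Next I would compute $(\Lambda^2\r_3)^{\r_3}$, the invariants of the adjoint action on $\Lambda^2\r_3$. Using the basis $\{x\wedge y,\ y\wedge h,\ h\wedge x\}$, I would write a general element $\om=p\,x\wedge y+q\,y\wedge h+s\,h\wedge x$ and require $\ad_w(\om)=0$ for $w=x,y,h$, where $\ad_w$ acts as a derivation on $\Lambda^2$. The action of $\ad_h$ on the three basis bivectors is the cleanest to compute and should already be quite restrictive; combining it with the conditions from $\ad_x$ and $\ad_y$ gives a linear system in $p,q,s$. I expect this system to have only the trivial solution, so $(\Lambda^2\r_3)^{\r_3}=0$, consistent with the table in the introduction, which lists $\dim(\Lambda^2\g)^\g=0$ for $\g=\r_3$.

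The computations are entirely routine, so there is no serious obstacle; the only point requiring care is the correct bookkeeping of signs and the Leibniz rule when letting $\ad_w$ act on wedge products, for instance $\ad_h(x\wedge y)=[h,x]\wedge y+x\wedge[h,y]=x\wedge y+x\wedge(x+y)=x\wedge y$. I would present the two verifications as short separate paragraphs, each ending with the conclusion that the relevant space is zero.
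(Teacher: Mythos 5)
Your proposal takes the same route as the paper: both claims are verified by direct computation in the given basis, reducing each to a homogeneous linear system. However, the single computation you actually exhibit is wrong: since $x\w(x+y)=x\w y$, the two terms add, and
\[
\ad_h(x\w y)=[h,x]\w y+x\w[h,y]=x\w y+x\w(x+y)=2\,x\w y,
\]
not $x\w y$ as you wrote (the factor $2$ is visible in the paper's formula $\ad_h(\om)=2a\,x\w y+(b+c)\,x\w h+c\,y\w h$). The slip happens to be harmless here: the coefficient of $x\w y$ in $\ad_h(\om)$ becomes $2p$ rather than $p$, and either way the invariance equation forces $p=0$. Still, it is exactly the Leibniz/sign bookkeeping that you yourself single out as the only delicate point, so it must be corrected before the ``routine'' verification is actually a proof.

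One remark on economy, comparing with the paper's proof: you do not need to impose invariance under all of $x$, $y$, $h$. The operator $\ad_h$ alone is injective on $\Lambda^2\r_3$, since $\ad_h(p\,x\w y+q\,y\w h+s\,h\w x)=2p\,x\w y+q\,y\w h+(s-q)\,h\w x$ vanishes only for $p=q=s=0$; this single computation is all the paper checks for the second claim. Likewise for the center, two brackets suffice: $[x,z]=-\gamma x$ already kills the $h$-component of $z=\alpha x+\beta y+\gamma h$ (note that this also disposes of the mixed elements with $\gamma\neq 0$ and $(\alpha,\beta)\neq(0,0)$, which your qualitative remark about ``combinations of $x,y$'' and ``$h$ itself'' does not explicitly cover), and then the invertibility of $\ad_h$ on $\R x\oplus\R y$ --- your observation --- kills the rest.
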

\begin{proof}
$\ad_x(ax+by+ch)=-cx=0$ implies $c=0$
and $0=\ad_h(ax+by)=ax+b(x+y)$ implies $a=0=b$.
If $\om= ax\w y+bx\w h+cy\w h
\in(\Lambda ^2\r_3)^{\r_3}$ then
\[
\ad _h(\om)=2ax\w y+(b+c)x\w h+cy\w h=0
\]
implies $a=b=c=0$.
\end{proof}

{\bf 1-cocycles on $\r_3$.}
Let us write, as in \ref{gcJc},  with $a_i$, $b_i$, $c_i\in\R$,
$\delta (x)=a_1x\wedge y+a_2 y\wedge h+ a_3h\wedge x$,
$\delta (y)=b_1x\wedge y +b_2y\wedge h+b_3h\wedge x$,
$\delta (h)=c_1x\wedge y+c_2y\wedge h+c_3 h\wedge x$.
The 1-cocycle  condition for $[x,y]=0$ and $\delta[x,y]=[\delta x,y]+[x,\delta y]$ give
\[\begin{array}{rcl}
0&=&[a_1x\wedge y+a_2 y\wedge h+ a_3h\wedge x,y]
+[x,b_1x\wedge y+b_2 y\wedge h+ b_3h\wedge x]
\\
&=&a_2 y\wedge x+ a_3y\wedge x-b_2 y\wedge x
\end{array}\]
so $a_2+a_3-b_2=0$.
Now, $[h,x]=x$ and $ \delta[h,x]=[\delta h,x]+[h,\delta x]$ imply 
\[\begin{array}{rcl}
&&a_1x\wedge y+a_2 y\wedge h+ a_3h\wedge x\\
&=&[c_1x\wedge y+c_2y\wedge h+c_3 h\wedge x,x]
+[h,a_1x\wedge y+a_2y\wedge h+a_3 h\wedge x]
\\
&=&c_2y\wedge x
+2a_1x\wedge y+a_2(x+y)\wedge h+a_3 h\wedge x
\end{array}\]
so
$a_1=-c_2+2a_1$ and $a_3=-a_2+a_3$. This is equivalent to $a_1=c_2$ and $a_2=0$.
Finally, $[h,y]=x+y$ and 
$\delta[h,y]=[\delta h,y]+[h,\delta y]$
imply
\[\begin{array}{rcl}
&&(a_1+b_1)x\wedge y+(a_2+b_2) y\wedge h+ (a_3+b_3)h\wedge x\\
&=&[c_1x\wedge y+c_2y\wedge h+c_3 h\wedge x,y]+
[h,b_1x\wedge y +b_2y\wedge h+b_3h\wedge x]
\\
&=&c_2y\wedge x+c_3 y\wedge x+
2b_1x\wedge y +b_2(x+y)\wedge h+b_3h\wedge x
\end{array}\]
So, $a_1+b_1=-c_2-c_3+2b_1$, $a_2+b_2=b_2$, $a_3+b_3=-b_2+b_3$.
Solving all the linear equations, we obtain
$a_2=a_3=b_2=0$, $c_2=a_1$, $c_3=b_1-2a_1$.
Hence, a general 1-cocycle $\delta$, is given by
\[\begin{array}{rcll}
\delta (x)&=&a_1x\wedge y\\
\delta (y)&=&b_1x\wedge y &+b_3h\wedge x\\
\delta (h)&=&c_1x\wedge y+a_1y\wedge h&+(b_1-2a_1)h\wedge x
\end{array}
\]
Or, in matrix notation:
\[
\delta=
\left(\begin{array}{ccc}
a_1&b_1&c_1\\
0    &0    &a_1\\
0    &b_3&b_1-2a_1
\end{array}
\right)
\]
The co-Jacobi condition for a general 1-cocycle is simply 
$2a_1^2=0$. Hence, a 1-cocycle satisfying also co-Jacobi is of the form
\[
\delta=
\left(\begin{array}{ccc}
0&b_1&c_1\\
0&0&0\\
0&b_3&b_1
\end{array}
\right)
\]
The Lie algebras automorphism group of $\r_3$ is the following subgroup of $\GL(3,\R)$
\[
\left\{\phi _{\mu,\rho,a,b}=
\left(
\begin{array}{ccc}
\mu&\rho&a\\
0&\mu&b\\
0&0&1
\end{array}
\right):\mu,\rho,a,b\in\R,\ \mu\neq 0
\right\}
\]
Under the action of the automorphism group, a 1-cocycle $\delta$ maps into
$
\delta'=
\left(\begin{array}{ccc}
0 &\frac{b_1+b b_3}{\mu}&\frac{2bb_1+b^2b_3+c_1}{\mu^2}\\
0&0&0\\
0&b_3&\frac{b_1+bb_3}{\mu}
\end{array}\right)
$, 
then $b_3$ is an invariant.

\noindent {\em Case $b_3\neq 0$}.
Taking $b=-b_1/b_3$ we get $b_1'=0$, so we may assume $b_1=0$.
The conditions $b_1'=0$ is preserved only if $b=0$, and in that case $\delta$ changes into
\[
\delta'=
\left(\begin{array}{ccc}
0 &0&\frac{c_1}{\mu^2}\\
0&0&0\\
0&b_3&0
\end{array}\right)\]
So, $c_1$ can be chosen up to positive scalar and we can take the numbers
$c=0,\pm 1$  as representatives.
We conclude that the isomorphism classes of Lie bialgebras with coefficient $b_3\neq 0$ consist of three 1-parameter families with cobrackets:
\[
\delta _{b_3,c_1}=
\left(\begin{array}{ccc}
0 &0&c_1\\
0&0&0\\
0&b_3&0
\end{array}\right): c_1=0,1,-1,\ b_3\neq 0\]
For $c_1=0$, the automorphism group consists of
$\left\{\left(
\begin{array}{ccc}
\mu&\rho&a\\
0&\mu&0\\
0&0&1
\end{array}\right):\mu,a\in\R,\mu\neq 0\right\}
$, and for $c_1=\pm1$, we have $c_1'=c_3/\mu^2$, so  $\mu^2$ must be equal to 1, and the automorphism group is 
$\left\{\left(
\begin{array}{ccc}
\mu&\rho&a\\
0&\mu&0\\
0&0&1
\end{array}\right):a\in\R,\mu=\pm 1\right\}
$.

\

\noindent {\em Case $b_3=0$}.
We have then
$
\delta=
\left(\begin{array}{ccc}
0&b_1&c_1\\
0&0&0\\
0&0&b_1
\end{array}
\right)\mapsto
\delta'=
\left(\begin{array}{ccc}
0& b_1  /\mu&
(2 b b_1   + c_1)/\mu^2\\
0&0&0\\
0&0& b_1 /  \mu
\end{array}\right)
$
hence, $b_1$ is determined up to a multiple; 
we consider the cases $b_1\neq 0$ and  $b_1=0$.

\

\noindent \noindent {\em Case $b_1\neq 0$}.
We may assume $b_1=1$ then
$\delta'=
\left(\begin{array}{ccc}
0& 1  /\mu&
(2 b   + c_1)/\mu^2\\
0&0&0\\
0&0& 1 /  \mu
\end{array}\right)
$.
If we whish to preserve $b_1=1$ we need to impose that $\mu =1$; we obtain
$
\delta'=
\left(\begin{array}{ccc}
0& 1  &
2 b   + c_1\\
0&0&0\\
0&0& 1 
\end{array}\right)
$. 
We may choose $b=-\frac {c_1}{2}$, so the new $c_1'=0$
 and take as a representative of the class of isomorphism
$
\delta=
\left(\begin{array}{ccc}
0& 1  & 0\\
0&0&0\\
0&0& 1 
\end{array}\right)
$. 
The automorphisms group of this Lie bialgebra is
\[
G_{b_1\neq 0}=\left\{\left(
\begin{array}{ccc}
1&\rho&a\\
0&1&0\\
0&0&1
\end{array}\right):\rho,a\in\R,\right\}
\]

\noindent {\em Case $b_1= 0$}.
We have then
\[
\delta=
\left(\begin{array}{ccc}
0&0&c_1\\
0&0&0\\
0&0&0
\end{array}
\right)\mapsto
\delta'=
\left(\begin{array}{ccc}
0& 0 & c_1/\mu ^2\\
0&0&0\\
0&0& 0 
\end{array}\right)
\]
Hence, we obtain that $c_1=0,\pm 1$ are all the possibilities for $c_1$.
The automorphisms group of such Lie bialgebra with $c_1=0$ is
\[
G_{b_1= c_1=0}=\left\{\left(
\begin{array}{ccc}
\mu &\rho&a\\
0&\mu &b\\
0&0&1
\end{array}\right):\mu,\rho,a,b\in\R,\mu\neq 0\right\}
\]

On the other hand, the automorphisms group of the Lie bialgebras classes with $c_1=\pm 1$ 
consists of the  Lie algebra maps satisfying $\mu^2=1$; hence
\[
G_{b_1= 0,c_1\neq 0}=\left\{\left(
\begin{array}{ccc}
\mu &\rho&a\\
0&\mu &b\\
0&0&1
\end{array}\right):\mu,\rho,a,b\in\R,\mu =\pm 1\right\}
\]

\begin{theorem}
The isomorphism classes of Lie bialgebra structures on $\r_3$ is given by the following list of cobrackets:
\[
\delta _{b_1\neq 0,b_3=0}=
\left(\begin{array}{ccc}
0& 1&0\\
0&0&0\\
0&0& 1
\end{array}
\right)\ \hbox{ and }
\delta _{c_1,b_3}=
\left(\begin{array}{ccc}
0&0& c_1\\
0&0&0\\
0&b_3&0
\end{array}\right): b_3\in\R,\ c_1=0,\pm 1
\]
\end{theorem}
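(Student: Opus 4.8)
The plan is to run the standard four-step strategy, taking for granted the two genuinely computational inputs already obtained above: that every co-Jacobi $1$-cocycle on $\r_3$ has the form
\[
\delta=
\left(\begin{array}{ccc}
0&b_1&c_1\\
0&0&0\\
0&b_3&b_1
\end{array}
\right),
\]
and that conjugation by $\phi_{\mu,\rho,a,b}$ acts on the triple $(b_1,c_1,b_3)$ only through $\mu$ and $b$, by $b_3\mapsto b_3$, $\ b_1\mapsto(b_1+bb_3)/\mu$, $\ c_1\mapsto(2bb_1+b^2b_3+c_1)/\mu^2$; the parameters $\rho,a$ act trivially. The first consequence is that $b_3$ is an absolute invariant of the isomorphism class, so I would stratify the classification by the value of $b_3$ and, within each stratum, spend the surviving freedom in $\mu$ and $b$ to normalise $b_1$ and $c_1$.

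For $b_3\neq 0$ I would first pick $b=-b_1/b_3$ to force $b_1=0$; the stabiliser of the condition $b_1=0$ then requires $b=0$, and the residual $\mu$-action is $c_1\mapsto c_1/\mu^2$. Because $\mu^2$ ranges over all positive reals, this rescales $c_1$ without altering its sign, so the orbits are exactly $c_1=0,\,1,\,-1$, yielding the family $\delta_{c_1,b_3}$ with $b_3\neq 0$. For $b_3=0$ the law degenerates to $b_1\mapsto b_1/\mu$, so the condition $b_1=0$ is itself invariant and cleanly splits the stratum. If $b_1\neq 0$ I would scale $b_1$ to $1$ (which pins $\mu=1$) and then translate $c_1$ to $0$ using $b$, leaving the single class $\delta_{b_1\neq0,b_3=0}$. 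If $b_1=0$ the $2bb_1$ term vanishes and the action is again $c_1\mapsto c_1/\mu^2$, producing $c_1=0,\pm1$, i.e.\ the $b_3=0$ members of $\delta_{c_1,b_3}$.

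Assembling the strata shows that the two displayed families exhaust every class, so the remaining task is to certify that the listed representatives are pairwise non-isomorphic. I would read this off the same transformation law via three invariants: $b_3$ separates cobrackets with different values of $b_3$; the sign of $c_1$, being only ever rescaled by the positive factor $\mu^2$ once $b_1$ is normalised, keeps $c_1=1$, $c_1=-1$ and $c_1=0$ mutually distinct; and, among cobrackets with $b_3=0$, the invariance of the predicate ``$b_1=0$'' separates $\delta_{b_1\neq0,b_3=0}$ from every $\delta_{c_1,0}$.

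The part needing the most care---and the only real obstacle---is this non-isomorphism bookkeeping, namely checking that the order in which one normalises does not secretly create or destroy identifications. Concretely one must verify that the full orbit of a cobracket with a fixed $b_3$ meets the normalised slice $b_1=0$ in a single $\mu$-orbit, so that the sign of the normalised $c_1$ is genuinely well defined and $c_1=\pm1$ do not merge; and, dually, that once $b_1$ is scaled to $1$ in the $b_3=0$ stratum no surviving automorphism can reach a cobracket with $b_1=0$. Tracking the stabiliser at each stage---equivalently, the explicit automorphism groups $G_{b_1\neq0}$, $G_{b_1=c_1=0}$ and $G_{b_1=0,c_1\neq0}$ recorded above---is exactly what delivers both exhaustiveness and the absence of further coincidences, completing the proof.
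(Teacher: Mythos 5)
Your proposal is correct and takes essentially the same route as the paper: the invariance of $b_3$, the normalization $b=-b_1/b_3$ in the stratum $b_3\neq 0$ followed by the residual action $c_1\mapsto c_1/\mu^2$ giving $c_1=0,\pm 1$, and the split of the stratum $b_3=0$ by the invariant condition $b_1=0$ (scaling $b_1$ to $1$ and translating $c_1$ to $0$ when $b_1\neq 0$) reproduce exactly the paper's case analysis. The stabilizer bookkeeping you single out as the delicate point is also present in the paper, in the form of the checks that preserving $b_1'=0$ forces $b=0$ and preserving $b_1'=1$ forces $\mu=1$.
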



\section{Lie bialgebra structures on $\r_{3,\lambda }$ , with $|\lambda |\leq 1$.}

Recall that $\r _{3,\lambda }$ is the Lie algebra with bases $\{x,\ y, \ h\}$, and bracket
$[h,x]=x$, $[h,y]=\lambda y$,  $[x,y]=0$. 
We list general properties for  $\r_{3,\lambda}$:
\begin{itemize}
\item if $\lambda\neq 0$ then $\Z (\g)=0$; if $\lambda=0$ then $\Z(\g)=\cl{y}$.
\item \label{lambda2r3} If $\lambda = -1$ then $ \Lambda ^2(\g)^{\g}=\cl{x \w y}$.
If $\lambda \neq -1$ then $\Lambda ^2(\g)^{\g}=0$.
\end{itemize}

%
%
%
%
%
%
%
%

\begin{proposition}
All the 1-cocycles on the Lie algebra  $\r_{3,\lambda }$  with $|\lambda |\leq 1$
are
\[
\delta =
\left(\begin{array}{ccc}
a_1&\lambda c_3&c_1\\
0&\lambda a_3&\lambda a_1\\
a_3&0&c_3
\end{array}\right)\hbox{if $\lambda\neq 1$,}
\hskip2cm
\delta =
\left(\begin{array}{ccc}
a_1&c_3&c_1\\
a_2& a_3& a_1\\
a_3&b_3&c_3
\end{array}\right)\hbox{if $\lambda=1$}
\]
where we consider the basis 
$\{x\wedge y,\ y\wedge h, \ h\wedge x\}$ of $\Lambda ^2 (\g)$ and notations as in  \ref{gcJc}.
\end{proposition}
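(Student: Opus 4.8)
The plan is to impose the 1-cocycle condition directly and solve the resulting linear system, following steps (1)--(2) of the general strategy exactly as was done for $\h_3$ and $\r_3$. First I would record the adjoint action of $\{x,y,h\}$ on the basis $\{x\w y,\,y\w h,\,h\w x\}$ of $\Lambda^2\g$, using $\ad_z(u\w v)=[z,u]\w v+u\w[z,v]$ together with $[h,x]=x$, $[h,y]=\lambda y$, $[x,y]=0$. The only nonzero values are $\ad_h(x\w y)=(1+\lambda)x\w y$, $\ad_h(y\w h)=\lambda\,y\w h$, $\ad_h(h\w x)=h\w x$, $\ad_x(y\w h)=x\w y$ and $\ad_y(h\w x)=\lambda\,x\w y$.

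The structural point that keeps the computation finite is that the 1-cocycle condition, written as $\delta[u,v]=\ad_u(\delta v)-\ad_v(\delta u)$, is linear in the pair $(u,v)$ and so defines a linear map on $\Lambda^2\g$; hence it is satisfied for all pairs as soon as it holds on the basis $\{x\w y,\,y\w h,\,h\w x\}$, equivalently on the three brackets $[x,y]=0$, $[h,x]=x$ and $[h,y]=\lambda y$. Evaluating the condition on $[x,y]=0$ yields $b_2=\lambda a_3$; on $[h,x]=x$ it yields $c_2=\lambda a_1$ together with $(\lambda-1)a_2=0$; and on $[h,y]=\lambda y$ it yields $b_1=\lambda c_3$ together with $(\lambda-1)b_3=0$.

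At this stage the case distinction $\lambda\neq1$ versus $\lambda=1$ is forced precisely by the two equations $(\lambda-1)a_2=0$ and $(\lambda-1)b_3=0$. For $\lambda\neq1$ these impose $a_2=b_3=0$, leaving $a_1,a_3,c_1,c_3$ free and $b_1=\lambda c_3$, $b_2=\lambda a_3$, $c_2=\lambda a_1$ determined, which reassembles into the first displayed matrix. For $\lambda=1$ the two degenerate equations disappear, so $a_2$ and $b_3$ survive as extra free parameters, and the surviving relations $b_1=c_3$, $b_2=a_3$, $c_2=a_1$ produce the second matrix. As a consistency check I would specialize to $\lambda=0$: since $\Z(\r_{3,0})=\cl{y}$ and $(\Lambda^2\g)^\g=0$ for $\lambda\neq-1$, Proposition \ref{center} forces $\delta y=0$, and indeed setting $\lambda=0$ in the formula gives $b_1=b_2=b_3=0$.

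There is no serious obstacle: the argument is a single finite linear computation, and the only points requiring care are tracking the signs in $\ad_u(\delta v)-\ad_v(\delta u)$ and observing that the parameter $\lambda$ degenerates exactly two of the defining equations when $\lambda=1$, which is what splits the answer into the two stated forms. The dimension count serves as a final sanity check, giving four free parameters for $\lambda\neq1$ and six for $\lambda=1$, in agreement with the ``1-cocycles'' row of the table in the Introduction.
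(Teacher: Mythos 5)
Your proposal is correct and follows essentially the same route as the paper: both impose the 1-cocycle condition on the three bracket pairs $[x,y]=0$, $[h,x]=x$, $[h,y]=\lambda y$ and solve the resulting linear system, arriving at the identical relations $b_2=\lambda a_3$, $c_2=\lambda a_1$, $b_1=\lambda c_3$, with $(\lambda-1)a_2=0$ and $(\lambda-1)b_3=0$ forcing the split between $\lambda\neq1$ and $\lambda=1$. Your added consistency checks (the $\lambda=0$ specialization via Proposition \ref{center} and the dimension count against the table) are a nice touch but not part of the paper's argument.
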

\begin{proof} 
Let $\delta :\g\to\Lambda ^2 (\g)$ be a 1-cocyle, then
$\delta[h,x]=[\delta h,x]+[h,\delta x]$ and $[h,x]=x$ imply
\[\begin{array}{rl}
&a_1x\wedge y+a_2y\wedge h+a_3h\wedge x\\
=&
[c_1x\wedge y+c_2y\wedge h+c_3h\wedge x,x]+
[h,a_1x\wedge y+a_2y\wedge h+a_3h\wedge x]
\\
	=&c_2y\wedge x+a_1(1+\lambda )x\wedge y+\lambda a_2y\wedge h+a_3h\wedge x
\end{array}
\]
We conclude $\lambda a_1=c_2$, $a_2=\lambda a_2$, so $a_2=0$ if $\lambda\neq 1$, and no condition in $a_2$ for $\lambda=1$.
In an analogous way, using the cocycle condition $\delta[h,y]=[\delta h,y]+[h,\delta y]$
for  $[h,y]=\lambda y$, 
we get
\[\begin{array}{rl}
&\lambda (b_1x\wedge y+b_2y\wedge h+b_3h\wedge x)
\\
&=
[c_1x\wedge y+c_2y\wedge h+c_3h\wedge x,y]+
[h,b_1x\wedge y+b_2y\wedge h+b_3h\wedge x]
\\
&=\lambda c_3y\wedge x+(1+\lambda )b_1x\wedge y+\lambda b_2y\wedge h+b_3h\wedge x
\end{array}\]
so $b_1=\lambda c_3$ and $b_3=\lambda b_3$, so again $b_3=0$ if $\lambda\neq 1$ and 
no restriction on $b_3$ for $\lambda=1$.
The third condition is $\delta[x,y]=[\delta x,y]+[x,\delta y]$; since $[x,y]=0$, we get
\[
0=[a_1x\wedge y+a_2y\wedge h+a_3h\wedge x,y]+
[x,b_1x\wedge y+b_2y\wedge h+b_3h\wedge x]
=\lambda a_3y\wedge x -b_2y\wedge x
\]
so  $b_2=\lambda a_3$. 
As a consequence, the general form of a 1-cocycle is, for $\lambda\neq 1$:
\[\begin{array}{rcccccl}
\delta (x)&=&a_1x\wedge y&& &+&  a_3h\wedge x\\
\delta (y)&=&\lambda c_3x\wedge y &+&\lambda a_3y\wedge h\\
\delta (h)&=&c_1x\wedge y&+&\lambda a_1y\wedge h&+&c_3h\wedge x
\end{array}
\]
and for $\lambda=1$:
\[\begin{array}{rcccccl}
\delta (x)&=&a_1x\wedge y&+&a_2y\w h&+& a_3h\wedge x\\
\delta (y)&=& c_3x\wedge y &+&a_3y\wedge h&+&b_3 h\w x\\
\delta (h)&=&c_1x\wedge y&+&a_1y\wedge h&+&c_3h\wedge x
\end{array}
\]
\end{proof}

\noindent \noindent {\em Case  $\r _{3,\lambda }$ with $|\lambda |\leq 1, \lambda \neq  \pm 1$.}


 \begin{proposition}The automorphism group of the Lie algebra $\r_{3,\lambda}$ with   $\lambda \neq \pm1$,
is the following subgroup of $\GL (3,\R)$:
\[\Aut(\g)=\left\{  
\left(\begin{array}{ccc}
\mu &0 & a\\
0 &\nu &b\\
0 &0 & 1
\end{array}
\right):\ \mu \nu \neq 0 
\right \}
\]
\end{proposition}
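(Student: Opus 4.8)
The plan is to pin down an arbitrary $\phi\in\Aut(\g)$ by its action on a distinguished characteristic ideal and then on the transverse direction, finally checking that every matrix of the asserted shape really is an automorphism.

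First I would single out $\mathfrak n=\langle x,y\rangle$ as a characteristic ideal: it is the nilradical of $\g$ (and equals $[\g,\g]$ when $\lambda\neq0$), hence preserved by every automorphism, so $\phi(\mathfrak n)=\mathfrak n$. In the ordered basis $\{x,y,h\}$ this already forces $\phi$ to be block upper triangular, since $\phi(x),\phi(y)\in\mathfrak n$ makes the third coordinate of the first two columns vanish. Writing $\phi(h)=a x+b y+\kappa h$, the matrix of $\phi$ then has the form $\left(\begin{smallmatrix}\ast&\ast&a\\ \ast&\ast&b\\ 0&0&\kappa\end{smallmatrix}\right)$, with top-left $2\times2$ block $\phi_{\mathfrak n}:=\phi|_{\mathfrak n}$ invertible (as $\phi$ maps $\mathfrak n$ bijectively onto itself) and $\kappa\neq0$.

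Next I would exploit compatibility with $\ad_h$. Because $\mathfrak n$ is abelian ($[x,y]=0$), for $v\in\mathfrak n$ one has $[\phi(h),\phi(v)]=\kappa[h,\phi(v)]$, which together with $\phi([h,v])=[\phi(h),\phi(v)]$ gives $\phi_{\mathfrak n}\,A=\kappa\,A\,\phi_{\mathfrak n}$, where $A=\ad_h|_{\mathfrak n}=\mathrm{diag}(1,\lambda)$ in the basis $\{x,y\}$. Thus $A$ and $\kappa A$ are conjugate, so $\tr(A)=\tr(\kappa A)$, i.e. $1+\lambda=\kappa(1+\lambda)$. Since $\lambda\neq-1$, we cancel $1+\lambda\neq0$ and conclude $\kappa=1$. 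With $\kappa=1$ the relation becomes $\phi_{\mathfrak n}A=A\phi_{\mathfrak n}$, so $\phi_{\mathfrak n}$ commutes with $\mathrm{diag}(1,\lambda)$; as $\lambda\neq1$ the two eigenvalues of $A$ are distinct, whence its centralizer is exactly the diagonal matrices. Therefore $\phi_{\mathfrak n}=\mathrm{diag}(\mu,\nu)$ with $\mu\nu\neq0$, producing precisely the shape in the statement. I would close with the converse, a one-line check on the brackets $[h,x]=x$, $[h,y]=\lambda y$, $[x,y]=0$ that every such matrix preserves the structure.

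The delicate points are twofold. The first is justifying that $\mathfrak n$ is characteristic \emph{uniformly} in $\lambda$: for $\lambda=0$ one must use that $\mathfrak n$ is the nilradical, not the derived ideal $\langle x\rangle$. The second is the numerical step, where both parts of the hypothesis $\lambda\neq\pm1$ are genuinely needed: $\lambda\neq-1$ is what forces $\kappa=1$ through the trace identity, while $\lambda\neq1$ is what forces $\phi_{\mathfrak n}$ to be diagonal through the distinct-eigenvalue centralizer computation. This is also exactly why the automorphism group is strictly larger in the excluded cases $\lambda=\pm1$.
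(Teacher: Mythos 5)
Your proof is correct, and it takes a noticeably different route from the paper's. The paper argues concretely: it identifies $\langle x,y\rangle$ with the derived ideal $[\g,\g]$, observes that for $z=ch+ax+by\notin[\g,\g]$ the operator $\ad_z$ has $x$, $y$ as eigenvectors with eigenvalues $c$ and $c\lambda$, and --- using $|\lambda|<1$ --- separates $x$ from $y$ by the absolute value of the eigenvalue, so that any automorphism satisfies $\phi(x)=\mu x$, $\phi(y)=\nu y$; the coefficient of $h$ in $\phi(h)$ is then forced to equal $1$ by the single bracket $[\phi(h),\phi(x)]=\phi(x)$. You instead get invariance of $\langle x,y\rangle$ from its being the nilradical, derive the intertwining relation $\phi_{\mathfrak{n}}A=\kappa A\phi_{\mathfrak{n}}$, deduce $\kappa=1$ from the trace (using $\lambda\neq-1$), and then get diagonality of $\phi_{\mathfrak{n}}$ from the centralizer of $\mathrm{diag}(1,\lambda)$ (using $\lambda\neq 1$). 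Your version buys two things. First, uniformity in $\lambda$: at $\lambda=0$ the paper's characterization is genuinely deficient, since there $[\g,\g]=\langle x\rangle$ is one-dimensional, a vector $z\notin[\g,\g]$ may have zero $h$-component, and for $z$ with nonzero $h$-component the $0$-eigenspace of $\ad_z$ is two-dimensional, so ``eigenvector of $\ad_z$ generating $[\g,\g]$'' no longer pins down $y$; your nilradical argument covers this case cleanly, where the paper would need the center (or an intersection of kernels over all $z$) to repair the argument. Second, independence of the normalization: you only use that the eigenvalues $1,\lambda$ of $\ad_h|_{\mathfrak{n}}$ are distinct and have nonzero sum, not the ordering $|\lambda|\le 1$. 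What the paper's route buys is concreteness: it exhibits the lines $\R x$ and $\R y$ as canonically distinguished (by eigenvalue magnitude), which makes visible why the swap $x\leftrightarrow y$ appears exactly in the excluded case $\lambda=-1$; your trace identity encodes the same obstruction more algebraically. Both proofs end with the same routine verification that every matrix of the stated form is indeed an automorphism.
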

\begin{proof}
Let us see that if $\phi:\g\to\g$ is an automorphism of  Lie algebra,
then
\[\phi(x)=\mu x;\ \phi(y)=\nu y;\ \phi(h)=h+a x+b y
\]
for some $\mu,\nu ,a,b\in\R,\ \mu,\nu \neq 0$. Actually, the elements $x$ and $y$ may be characterized, up to scalar multiple,
as the generators of
 $[\g, \g]$ and eigenvectors of $\ad_z$, for all $z\in\g\setminus[\g,\g]$.
Moreover, given such  $z$, the element $x$ distinguishes from $y$ as being the eigenvector
corresponding to the eigenvalue with smaller absolute value.
 Explicitely, if $z\notin[\g,\g]$, $z=ch+ax+by$ with $c\neq 0$, 
\[
\ad _z(x)=[ch+ax+by, x]=cx,\quad
\ad _z(y)=[ch+ax+by, y]=c\lambda y
\]
where $|\lambda|<1$. This implies  $\phi(x)=\mu x $ and $\phi(y)=\nu y$, for some $\mu,\ \nu\neq 0$.

Let $\phi (x)=\tilde{x}=\mu x$,  $\phi (y)=\tilde{y}=\nu y$,  $\phi (h)=\tilde{h}=ch+ax+by$; 
if $\phi$ is a Lie algebra morphism, then
 $[\tilde h,\tilde{x}]=\tilde{x}$, so $c=1$. 
\end{proof}

Let $\phi$ be as before, and let
$\delta ':=(\phi\wedge\phi)^{-1}\delta\phi$; explicitly
\[
\delta '=
\left(\begin{array}{ccc}
\frac{a_1+ a_3b}{\nu}&\lambda \frac{c_3+ aa_3}{\mu}
& \frac{c_1 
 +a (a_1+a_3 b) (1+\lambda)+b c_3 (1+\lambda)}{\mu\nu}
 \\
0&\lambda  a_3 &  \lambda  \frac{a_1+ a_3b}{\nu}
\\
 a_3&0&\frac{c_3+aa_3   }{\mu}\end{array}\right)
\]
Notice that, if $a_3\neq 0$, by means of an automorphism with $b=-a_1/a_3$ and $a=-c_3/a_3$,
we get $\delta'$ with $a_1'=0=c_3'$; explicitly
$\delta '=
\left(\begin{array}{ccc}
0& 0
& \frac{a_3c_1 -a_1c_3 (1+\lambda)}{\mu\nu}
 \\
0&\lambda a_3 & 0\\
 a_3&0&0\end{array}\right)
=
\left(\begin{array}{ccc}
0& 0
& c_1'
 \\
0&\lambda a_3 & 0\\
 a_3&0&0\end{array}\right)
$.

\

Co-Jacobi condition for $\delta$ (recall $\lambda \neq \pm 1$) is
$(1-\lambda)(a_3c_1-a_1c_3(1+\lambda))=0$, or equivalently
$a_3c_1-a_1c_3(1+\lambda))=0$.
If $a_3=0$, this condition reduces to $a_1b_1=0$.
Note that, up to isomorphism,
we may independently change  $a_1$ and $b_1$ by $a_1'=a_1/\nu$ and
$c_3'=c_3/\mu$ respectively. But also, because one of them is zero, 
we have the following possibilities:
\begin{itemize}
\item
$(a_1,c_3)=(0,0)$, $c_1=0$ or $1$ because $c_1$ is determined (up to isomorphism) up to scalar multiple.
\item
$(a_1,c_3)=(1,0)$, and $c_1$ changes into
 $c_1'=(c_1+a(1+\lambda))/\mu$ (we need $\nu=1$ in order to preserve $a_1'=1$),
we see that we can  choose $a$ such that $c'_1=0$.
\item
$(a_1,c_3)=(0,1)$, and $c_1$ changes into $c_1'=(c_1+b(1+\lambda))/\nu$,
so we can also choose $c_1=0$.
\end{itemize}
If $a_3\neq 0$, we may assume $a_1=0=c_3$, then
co-Jacobi implies $c_1=0$. Hence, every cocycle with $a_3\neq 0$ satisfying co-Jacobi
is equivalent to
\[
\left\{
\delta _{a_3}=
\left(\begin{array}{ccc}
0 & 0 &0 \\
0&a_3 \lambda & 0 \\
a_3&0&0
\end{array}\right):0\neq a_3\in\R
\right\}
\]
The parameter
$a_3$ can not be modified using a Lie algebra automorphism, it is an invariant;
we have a 1-parameter family of isomorphism classes, parametrized by $a_3$.

\noindent 
The  bialgebra automorphisms are of the form
$
\phi =
\left(\begin{array}{ccc}
\mu & 0 &0\\
0& \nu & 0 \\
 0&0&1
\end{array}\right)
:\mu,\ \nu\neq 0
$.
We have finished the proof of the next statement.

\begin{theorem}
The set of representatives of all isomorphisms classes of Lie bialgebras on
$\r_{3,\lambda} : \lambda\neq \pm1$ is given by the following cobrackets
\[\left\{
\left(
\begin{array}{ccc}
0&0&1\\
0&0&0\\
 0&0&0
\end{array}
\right);
\quad
\left(
\begin{array}{ccc}
1&0&0\\
0&0&\lambda\\
 0&0&0
\end{array}
\right);
\quad
\left(
\begin{array}{ccc}
0&\lambda&0\\
0&0&0\\
0&0&1
\end{array}
\right)\
\hbox{and}
\left(
\begin{array}{ccc}
0&0&0\\
0&\lambda a_3&0\\
a_3&0&0
\end{array}
\right)
:a_3\in\R
 \right\}
\]
\end{theorem}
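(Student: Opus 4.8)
The plan is to follow the four-step strategy: start from the general 1-cocycle displayed in the previous proposition (for $\lambda\neq 1$), impose the co-Jacobi identity, and then let $\Aut(\g)$ act to extract orbit representatives. First I would substitute the cocycle matrix into the three co-Jacobi equations of Section \ref{gcJc}; since most of the structure constants already vanish, this collapses to the single relation $(1-\lambda)(a_3c_1-a_1c_3(1+\lambda))=0$, and because $\lambda\neq\pm1$ this is equivalent to $a_3c_1-a_1c_3(1+\lambda)=0$. Next I would record the conjugation formula $\delta'=(\phi\w\phi)^{-1}\delta\phi$ for the automorphism $\phi$ with parameters $\mu,\nu,a,b$ displayed above; the crucial observations are that the $(2,2)$-entry transforms as $a_3'=a_3$ and that the characteristic derivation has $\tr(\D)=a_3(1-\lambda^2)$, so $a_3$ is an honest isomorphism invariant. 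This immediately splits the analysis into the branches $a_3\neq0$ and $a_3=0$.

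For $a_3\neq0$ I would use the two translation parameters: choosing $b=-a_1/a_3$ and $a=-c_3/a_3$ forces $a_1'=c_3'=0$, after which the co-Jacobi relation forces $c_1'=0$ as well, leaving exactly the one-parameter family $\delta_{a_3}$. Since $a_3$ cannot be altered further, these are pairwise non-isomorphic, and the residual stabiliser (the diagonal $\mu,\nu$) gives the stated bialgebra automorphism group. For $a_3=0$ the constraint becomes $a_1c_3=0$ (here the hypothesis $\lambda\neq-1$ is essential), so I would run three subcases according to the vanishing pattern of $(a_1,c_3)$: when both vanish, $c_1$ rescales by $\mu\nu$ and normalises to $0$ or $1$ (the value $0$ coalescing with the $a_3=0$ member of the family, and $c_1=1$ producing the first representative); when $a_1\neq0=c_3$, rescale $a_1$ to $1$ and kill $c_1$ by the shift $a=-c_1/(1+\lambda)$; when $a_1=0\neq c_3$, symmetrically rescale $c_3$ to $1$ and kill $c_1$ by $b=-c_1/(1+\lambda)$. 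These produce the remaining three listed cobrackets.

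The one genuinely delicate point is the pairwise non-isomorphism of the three sporadic representatives, since the characteristic derivation only separates them partially: the cobracket with $c_1=1$ is involutive ($\D=0$), but the other two both yield a rank-one nilpotent $\D$ and hence the same trace, determinant and Jordan type. The hard part is therefore to see that these last two lie in distinct $\Aut(\g)$-orbits by a finer invariant; this is resolved directly from the transformation formula, which on the locus $a_3=0$ preserves the zero-pattern of the pair $(a_1,c_3)$ — one orbit always has $a_1\neq0,\ c_3=0$ and the other always has $a_1=0,\ c_3\neq0$ — so no automorphism can interchange them. Assembling the four branches together with their stabilisers then yields the complete list and finishes the proof.
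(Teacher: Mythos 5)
Your proposal is correct and follows essentially the same route as the paper: the same reduction of co-Jacobi to $a_3c_1-a_1c_3(1+\lambda)=0$, the same split into $a_3\neq 0$ (normalize $a_1=c_3=0$ via $b=-a_1/a_3$, $a=-c_3/a_3$, then $c_1=0$) and $a_3=0$ (with the three vanishing patterns of $(a_1,c_3)$ and the shifts $a,b=-c_1/(1+\lambda)$). Your extra touches — reading the invariance of $a_3$ from $\tr(\D)=a_3(1-\lambda^2)$ and spelling out that the zero-pattern of $(a_1,c_3)$ is preserved, hence the sporadic representatives are pairwise non-isomorphic — only make explicit what the paper leaves implicit in its transformation formula ($a_1'=a_1/\nu$, $c_3'=c_3/\mu$).
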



\noindent{\em Case  $\g=\r_{3,\lambda}$ with $\lambda=-1$}. 
Let us recall that $\r _{3,\lambda =-1}$ is the Lie algebra with bases $\{x,\ y, \ h\}$, and bracket
$[h,x]=x$, $[h,y]=- y$,  $[x,y]=0$.
The automorphisms group of $\r_{3,\lambda=-1}$, 
in the ordered basis $\{x,\ y, \ h\}$, identifies with the following subgroup of $\GL (3,\R)$:
\[\Aut(\r_{3,\lambda=-1})=\left\langle  
\phi _0=\left(\begin{array}{ccc}
0 &1 & 0\\
1 &0&0\\
0 &0 & -1
\end{array}
\right),\ 
\phi _{\mu, \nu,a,b}=
\left(\begin{array}{ccc}
\mu &0 & a\\
0 &\nu &b\\
0 &0 & 1
\end{array}
\right):\ \mu \nu \neq 0
\right\rangle
\]
In fact, $\phi _{\mu, \nu,a,b}$ is an automorphism by the same reasons as above, but in this case, the absolute value of the eigenvalues of $x$ and $y$ are the same. Actually,
\[
x\mapsto y;\quad y\mapsto x;\quad h\mapsto -h
\]
is an automorphism, which we denote by $\phi_0$. Moreover, any automorphism is obtained by compositions of the above ones.

The set of 1-cocycles was computed for any $\lambda$ but for convenience  in case $\lambda = -1$ we parametrize them by
$
\delta =
\left(\begin{array}{ccc}
a_1&b_1&c_1\\
0&- a_3&- a_1\\
a_3&0&-b_1
\end{array}\right)
$ 
instead of $
\delta =
\left(\begin{array}{ccc}
a_1&\lambda c_3&c_1\\
0&\lambda a_3&\lambda a_1\\
a_3&0&c_3
\end{array}\right)$.
For these cocycles, co-Jacobi reads $2a_3c_1=0$.
The action of the automorphism group is
\[
\xymatrix@+2.5pc{
 \delta =
\left(\hbox{$
\begin{array}{ccc}
a_1&b_1&c_1\\
0&- a_3&- a_1\\
a_3&0&-b_1
\end{array}$}\right)
\ar@{|->}[r]^{\phi _{\mu, \nu,a,b}}
&
\delta '=
\left(\hbox{$
\begin{array}{ccc}
\frac{a_1+a_3b}{\nu}&\frac{b_1-aa_3}{\mu}& \frac{c_1}{\mu\nu}           
 \\
0&-a_3&\frac{-a_1-a_3b}{\nu}\\
a_3&0&\frac{-b_1+aa_3}{\mu}
\end{array}$}
\right)}
\]\[
\xymatrix@+2.5pc{
 \delta =
\left(\hbox{$
\begin{array}{ccc}
a_1&b_1&c_1\\
0&- a_3&- a_1\\
a_3&0&-b_1
\end{array}$}\right)
\ar@{|->}[r]^{\phi _0}
&
\delta '=
\left(\hbox{$
\begin{array}{ccc}
-b_1 & -a_1 & c_1 \\
0& a_3 &b_1 \\
-a_3 & 0 &-b_1
\end{array}$}
\right)
}
\]


\noindent {\em Case $a_3\neq 0$:} Co-Jacobi implies $c_1=0$. But also,
taking an automorphism $\phi$ with  parameters $a=b_1/a_3$ and
$b=-a_1/a_3$ we get $\delta '$ with $a_1'=b_1'=0$.
Besides, using $\phi _0$, $a_3\mapsto a_3'=-a_3$, so
we may choose $a_3>0$.
We conclude that inside this isomorphism class, we have the representative
\[
\delta=\left(\begin{array}{ccc}
0&0&0\\
0&-a_3&0\\
a_3&0&0
\end{array}\right):a_3>0
\]

\noindent {\em Case $a_3=0$:} In this case, co-Jacobi condition is automatically satisfied.
For each 3-uple $(a_1,b_1,c_1)$ we have
$\delta_{a_1,b_1,c_1} \cong \delta_{\frac{a_1}{\mu},\frac{b_1}{\nu},\frac{c_1}{\mu\nu}}$ \ie
\[
 \delta_{a_1,b_1,c_1} =
\left(\begin{array}{ccc}
a_1&b_1&c_1\\
0&0&- a_1\\
0&0&-b_1
\end{array}\right)
\cong
\delta_{\frac{a_1}{\mu},\frac{b_1}{\nu},\frac{c_1}{\mu\nu}}=
\left(\begin{array}{ccc}
\frac{a_1}{\nu}&\frac{b_1}{\mu}& \frac{c_1}{\mu\nu}           
 \\
0&0&\frac{-a_1}{\nu}\\
0&0&\frac{-b_1}{\mu}
\end{array}\right)
\]
By means of the isomorphism $\phi_0$
 we obtain aditionally
$ \delta_{a_1,b_1,c_1} \cong
 \delta_{-b_1,-a_1,c_1}$.
Choosing conveniently  $\mu$ and $\nu$, we arrive at the following
list of iso classes:
\begin{eqnarray*}
\delta_{0,0,0}=0;\quad
\delta_{0,0,1}=&
\left(\begin{array}{ccc}
0&0&1\\
0&0&0\\
0&0&0
\end{array}\right);\quad
\delta_{1,0,0}&=\left(\begin{array}{ccc}
1&0&0\\
0&0&-1\\
0&0&0
\end{array}\right);
\\
\delta_{1,0,1}=&\left(\begin{array}{ccc}
1&0&1\\
0&0&-1\\
0&0&0
\end{array}\right)
;\quad
\delta _{1,1,c_1}&=\left(\begin{array}{ccr}
1&1&c_1\\
0&0&-1\\
0&0&-1
\end{array}\right)
\end{eqnarray*}
where we identify $\delta _{1,0,0}\cong\delta _{0,1,0}$; $\delta _{1,0,1}\cong\delta _{0,1,1}$.




\section{Lie bialgebra structures on  $\r _{3,\lambda }$ with $\lambda = 1$.}

Recall that $\r _{3, 1 }$ is the Lie algebra with ordered
bases $\{x,\ y, \ h\}$ and bracket determined by
$[h,x]=x$, $[h,y]= y$,  $[x,y]=0$.
It can be easily verified that the automorphism group is the  subgroup
of $\GL (3,\R)$ expressed as matrices as:
\[
 \Aut(\r_{3,1})=\left\{  \phi_{\mu ,\nu,\rho,\sigma}^{ a ,b}=
\left(\begin{array}{ccc}
\mu &\rho & a\\
\sigma &\nu &b\\
0 &0 & 1
\end{array}
\right):\ \mu \nu -\rho\sigma\neq 0 
\right \}
\]
%
Recall that the 1-cocycles are  
in matrix notation given by:
$
\delta =
\left(\begin{array}{ccc}
a_1&b_1&c_1\\
a_2& a_3& a_1\\
a_3&b_3&b_1
\end{array}\right)
$
with $a_1,a_2, a_3,b_1,b_3,c_1\in\R$.
The co-Jacobi identity is always satisfied.

\begin{theorem}
For the Lie algebra $\g=\r_{3,\lambda=1}$ the exhaustive list
of representatives of the isomorphism classes of Lie bialgebras, or equivalently,
the exhaustive list of a representative set of cobrackets is the following :
\end{theorem}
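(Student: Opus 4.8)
The plan is to exploit the two features already established for $\r_{3,1}$: the co-Jacobi condition is automatic, and $\Aut(\r_{3,1})$ is the semidirect product of $\GL(2,\R)$ (acting on $[\g,\g]=\cl{x,y}$) with the translations $h\mapsto h+ax+by$. Hence classifying Lie bialgebras is exactly describing the orbits of this six-dimensional group on the six-dimensional space of cocycles $\delta$. The engine will be the characteristic endomorphism $\D=[-,-]\circ\delta$, which is conjugation-equivariant and therefore supplies isomorphism invariants. First I would compute $\D$ in the basis $\{x,y,h\}$: applying the bracket to each of $\delta(x),\delta(y),\delta(h)$ lands everything in $[\g,\g]$, so $\D$ has image in $[\g,\g]$ and its $h$-row vanishes; thus $\D$ is controlled by its restriction $M:=\D|_{[\g,\g]}$. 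A direct computation gives the trace-zero matrix $M=\left(\begin{smallmatrix} a_3 & b_3 \\ -a_2 & -a_3 \end{smallmatrix}\right)$, with characteristic polynomial $t^2-(a_3^2-a_2b_3)$, so the eigenvalues of $\D$ are $0$ and $\pm\sqrt{\Delta}$ where $\Delta:=a_3^2-a_2b_3$.

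By Lemma \ref{lgg} every automorphism of $\r_{3,1}$ restricts to an automorphism of $[\g,\g]$, and this restriction $\Aut(\g)\to\GL([\g,\g])=\GL(2,\R)$ is onto, since the block $\left(\begin{smallmatrix}\mu&\rho\\\sigma&\nu\end{smallmatrix}\right)$ can be any invertible matrix. Because $\D$ transforms by honest conjugation, $M$ is determined up to $\GL(2,\R)$-conjugacy, and as $\tr M=0$ its class is pinned down by $\Delta$ together with its Jordan type. This splits the whole analysis into four cases by the conjugacy class of $M$: real-split ($\Delta>0$, eigenvalues $\pm\sqrt\Delta$), elliptic ($\Delta<0$), nonzero nilpotent ($\Delta=0$, $M\neq0$), and zero ($M=0$). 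In each case I would conjugate $M$ to a canonical form — $\mathrm{diag}(\sqrt\Delta,-\sqrt\Delta)$, a rotation-type matrix, $\left(\begin{smallmatrix}0&1\\0&0\end{smallmatrix}\right)$, or $0$. A key point to record is that conjugation preserves $\det M=-\Delta$ exactly and no automorphism rescales $\D$ (the $h$-coefficient is frozen at $1$), so $\Delta$ survives as a genuine continuous invariant; the final list must therefore contain one- and two-parameter families, not merely isolated classes.

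With $M$ fixed, the remaining coordinates $a_1,b_1,c_1$ — which encode the $x\w y$-component of $\delta$ and the $[\g,\g]\w h$-component of $\delta(h)$, linked through the cocycle relations — still carry an action of the residual stabilizer: the centralizer of $M$ in $\GL(2,\R)$, the dilation $x,y\mapsto sx,sy$, and the translations $h\mapsto h+ax+by$. I would work out this residual action explicitly in each of the four cases and use it to normalize $a_1,b_1,c_1$ to $0$ or $\pm1$, just as in the previous sections. Distinctness of the resulting representatives would then be argued by using the eigenvalue invariant of $\D$ to separate the four main strata and the individual values of $\Delta$, supplemented by finer invariants (ranks and residual signs of $a_1,b_1,c_1$) to separate classes within a single stratum.

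The hard part will be these last two steps. Since $\Aut(\r_{3,1})$ is the largest automorphism group among the solvable three-dimensional cases, the coordinates $a_1,b_1,c_1$ transform in an entangled, case-dependent fashion, so the residual normalization is delicate: it is easy either to overlook an identification produced by the big group, or to wrongly merge two genuinely distinct orbits. Establishing completeness — that the chosen invariants cut out exactly the orbits — is where the real care lies: the conjugation-equivariance of $\D$ gives necessity of the invariants immediately, but sufficiency must be proved by exhibiting, for an arbitrary cocycle, the explicit automorphism carrying it to its normal form.
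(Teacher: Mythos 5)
Your framework is sound and it is a genuinely different route from the paper's. The paper never invokes $\D$ in this section: it computes the action of a general automorphism on the six coefficients $(a_1,a_2,a_3,b_1,b_3,c_1)$ and then runs a decision tree on the non-invariant conditions $b_3\neq 0$ / $b_3=0$, $a_2\neq 0$ / $a_2=0$, $a_3\neq 0$ / $a_3=0$, etc., normalizing at each leaf. Your computations check out: $\D x=a_3x-a_2y$, $\D y=b_3x-a_3y$, $\D h=b_1x-a_1y$, so $M=\D|_{[\g,\g]}=\bigl(\begin{smallmatrix}a_3&b_3\\-a_2&-a_3\end{smallmatrix}\bigr)$ is trace-free; since every automorphism fixes $h$ modulo $[\g,\g]$ and restricts to an arbitrary element of $\GL(2,\R)$ on $[\g,\g]$, $M$ transforms by honest conjugation, translations act only on $(a_1,b_1,c_1)$, and $\Delta=a_3^2-a_2b_3$ is an exact invariant. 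Your four strata do match the skeleton of the paper's answer: the split stratum is the paper's family $\delta_{a_3,c_1}$, $a_3>0$ ($\Delta=a_3^2$), the elliptic stratum is $\delta_{a_2,c_1}$, $a_2>0$ ($\Delta=-a_2$), and so on. Three caveats. First, ``$\D$ is controlled by its restriction $M$'' is loose: $\D h=b_1x-a_1y$ is extra data, though you do recover $(a_1,b_1,c_1)$ later. Second, only one-parameter families occur in the end (the continuous invariant is the single number $\Delta$); your ``must contain one- and two-parameter families'' overstates what the argument gives. Third, and most significantly, the residual orbit analysis of $(a_1,b_1,c_1)$ under the stabilizer of $M$ --- which you explicitly defer --- is where the paper spends essentially all of its effort, so as written your text is a correct plan rather than a proof of the stated list.

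There is, however, a concrete payoff of your invariant-first organization worth recording: executed faithfully, it produces a \emph{shorter} list than the theorem states, because the paper's decision tree visits the nilpotent stratum ($\Delta=0$, $M\neq 0$) twice and the resulting entries are pairwise isomorphic. Indeed $\chi\colon x\mapsto -y$, $y\mapsto x$, $h\mapsto h$ is an automorphism of $\r_{3,1}$ (take $\mu=\nu=0$, $\rho=1$, $\sigma=-1$, $a=b=0$ in the paper's parametrization), and the paper's own displayed transformation formula shows that $\chi$ acts by $(a_2,b_3)\mapsto (b_3,a_2)$, $a_3\mapsto -a_3$, $(a_1,b_1)\mapsto(-b_1,a_1)$, $c_1\mapsto c_1$. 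Consequently
\[
\left(\begin{array}{ccc}0&0&c_1\\0&0&0\\0&1&0\end{array}\right)\cong
\left(\begin{array}{ccc}0&0&c_1\\1&0&0\\0&0&0\end{array}\right),
\qquad
\left(\begin{array}{ccc}1&0&0\\0&0&1\\0&1&0\end{array}\right)\cong
\left(\begin{array}{ccc}0&1&0\\1&0&0\\0&0&1\end{array}\right),
\]
i.e.\ the four classes obtained under ($b_3\neq0$, $a_2=0$) coincide with the four obtained under ($b_3=0$, $a_3=0$, $a_2\neq0$), so this stratum contains four classes, not eight. This is precisely the failure mode you warn about --- an identification produced by the big group, overlooked because the case split is not isomorphism-invariant --- except that it is the paper's proof, not your method, that falls into it; your stratification by the conjugacy class of $M$ makes such cross-branch collisions impossible by construction. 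So when you carry out the deferred normalization, expect your final list to disagree with the theorem as printed, and be prepared to justify the discrepancy with the explicit isomorphism above.
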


\[
\xymatrix@-2.3pc{
&a_2\neq 0\TO&
\left(\hbox{$
\begin{array}{ccc}
0&0&c_1\\
a_2&0&0\\
0&1&0
\end{array}
$}\right)&: a_2>0, c_1=0,\pm 1
\\
b_3\neq 0\ar[rd]\ar[ru]&&a_1\neq 0\ar@{=>}[r]
&
\left(\hbox{$
\begin{array}{ccc}
1&0&0\\
0&0&1\\
0&1&0\end{array}$}\right)
\\
&a_2=0\ar[ru]\ar[r]&a_1=0\ar@{=>}[r]&
\left(\hbox{$
\begin{array}{ccc}
0&0&c_1\\
0&0&0\\
0&1&0\end{array}$}
\right):c_1=0,\pm 1
}\]
\[
\xymatrix@-2.3pc{
&a_3\neq 0\TO&
\left(\hbox{$
\begin{array}{ccc}
0&0&c_1\\
0&a_3&0\\
a_3&0&0\end{array}$}
\right)&: a_3>0,c_1=0,1\\
b_3=0\ar[ru]\ar[rd]&&&b_1=0\ar@{=>}[r]&
\left(\hbox{$
\begin{array}{ccc}
0&0&c_1\\
1&0&0\\
0&0&0\end{array}$}
\right):c_1=0,\pm 1
\\
&a_3=0\ar[r]\ar[rd]&a_2\neq 0\ar[r]\ar[ru]&b_1\neq 0\ar@{=>}[r]&
\left(\hbox{$
\begin{array}{ccc}
0&1&0\\
1&0&0\\
0&0&1\end{array}$}
\right)
\\
&&a_2=0\ar[r]\ar[rd]&(a_1,b_1)\neq(0,0)\TO&
\left(\hbox{$
\begin{array}{ccc}
1&0&0\\
0&0&1\\
0&0&0\end{array}$}
\right)
\\
&&&(a_1,b_1)=(0,0)\TO&
\left(\hbox{$
\begin{array}{ccc}
0&0&c_1\\
0&0&0\\
0&0&0\end{array}$}
\right):c_1=0,1}
\]
We dedicate the rest of the section to the proof of this result, which proceeds in the cases
given by the previous diagram.

\noindent {\bf Action of the automorphism group}.
$\delta '=(\phi\wedge\phi)^{-1}\delta\phi$,   $\phi=\phi_{\mu ,\nu,\rho ,\sigma}^{a, b}$, then
$\delta '=$\[
\left(
\begin{array}{ccc}
\frac{\mu(a_1 +a a_2 +a_3 b)+ \sigma(a a_3+b_1+b b_3)}{\mu \nu-\rho \sigma}&
\frac{\nu(a a_3 +b_1 +b b_3)+\rho(a_1 +a a_2 +a_3 b)}{\mu \nu-\rho \sigma}&
\frac{a(aa_2+2 a_1+2 a_3 b)+2 b b_1+b^2b_3+c_1}{\mu\nu-\rho\sigma}\\
\frac{a_2 \mu^2+\sigma (2 a_3 \mu+b_3 \sigma)}{\mu \nu-\rho \sigma}&
\frac{a_3 \mu \nu+a_2 \mu \rho+b_3 \nu \sigma+a_3 \rho \sigma}{\mu \nu-\rho \sigma}&
\frac{\mu(a_1 +a a_2 +a_3 b)+ \sigma(a a_3+b_1+b b_3)}{\mu \nu-\rho \sigma}\\
\frac{a_3 \mu \nu+a_2 \mu \rho+b_3 \nu \sigma+a_3 \rho \sigma}{\mu \nu-\rho \sigma}&
\frac{b_3 \nu^2+\rho (2 a_3 \nu+a_2 \rho)}{\mu \nu-\rho \sigma}&
\frac{\nu(a a_3 +b_1 +b b_3)+\rho(a_1 +a a_2 +a_3 b)}{\mu \nu-\rho \sigma}
\end{array}
\right)
\]
Considering the special type of automorphisms with $\rho=0=\sigma$, we obtain
\[\delta '=
\left(
\begin{array}{ccc}
\frac{a_1+a a_2 +a_3 b}{ \nu}&
\frac{a a_3 +b_1 +b b_3 }{\mu }&
\frac{a^2a_2+a (2 a_1+2 a_3 b)+2 b b_1+b^2b_3+c_1}{\mu\nu}\\
\frac{a_2 \mu}{ \nu}&
a_3&
\frac{a_1 +a a_2 +a_3 b }{ \nu}\\
a_3 &
\frac{b_3 \nu}{\mu }&
\frac{a a_3 +b_1 +b b_3 }{\mu }
\end{array}
\right)
\]
If also $\mu=\nu=1$ and $a=0$, we get
\[\delta '=
\left(
\begin{array}{ccc}
a_1 +a_3 b&
b_1 +b b_3&
2 b b_1+b^2b_3+c_1\\
a_2 &
a_3&
a_1 +a_3 b \\
a_3 &
b_3&
b_1 +b b_3 
\end{array}
\right)
\]
\noindent{\em Case  $b_3\neq 0$}.
From the previous analysis, we see that if $b_3\neq 0$, we can choose
$b$ such that  $b_1'$
transforms into zero. Explicitly, take
$b=-b_1/b_3$. So we may assume from the beginning, 
 that $b_1=0$.
Consider now $\delta$ of type 
$
\delta =
\left(\begin{array}{ccc}
a_1&0&c_1\\
a_2& a_3&a_1\\
a_3&b_3&0
\end{array}\right)
$.
If we use an automorphism with  $\rho=0$ we get
$
\delta '=
\left(
\begin{array}{ccc}
\frac{\mu(a_1+a a_2 +a_3 b)+\sigma(a a_3+b b_3)}{\mu \nu}&\frac{a a_3+b b_3}{\mu}&*\\
*  & * & *\\
* & * & *
\end{array}
\right)
$. 
Then we can preserve the condition $b_1'=0$ if we set for example $a=b_3$, $b=-a_3$. In this case, 
\[\delta'=\frac{1}{\mu\nu}
\left(
\begin{array}{ccc}
(a_1-a_3^2+a_2b_3)\mu&0&2 a_1 b_3-a_3^2b_3+a_2b_3^2+c_1\\
a_2 \mu^2+\sigma(2a_3  \mu+b_3 \sigma)&\nu(a_3 \mu +b_3  \sigma)&(a_1-a_3^2+a_2b_3)\mu\\
\nu(a_3 \mu +b_3  \sigma)&b_3 \nu^2&0\\
\end{array}
\right)
\]
Since $b_3\neq 0$, we may choose $\sigma=-a_3\mu/b_3$ to get $\delta '$
with $a_3'=0$, so we may assume  from the beginning that $a_3=0$, \ie
$
\delta=
\left(
\begin{array}{ccc}
a_1&0&c_1\\
a_2&0&a_1\\
0&b_3&0
\end{array}
\right)
$. 
Using an automorphism  with $b=\rho=\sigma=0$,  $\nu=1$, we get
$
\delta'=
\left(
\begin{array}{ccc}
a_1+aa_2&0&\frac{2aa_1+a^2a_2+c_1}{\mu}\\
a_2\mu & 0 & a_1+a_2\\
0&\frac{b_3}{\mu}&0
\end{array}
\right)
$. 
Notice that we can make $b_3'=1$ by means of $\mu =b_3$
so, we may assume $b_3=1$.
We distinguish two subcases: $a_2\neq 0$ and $a_2=0$.

\noindent {\em If $a_2\neq 0$}, 
we can choose $a=-a_1/a_2$ and get $a_1'=0$, so we start from the beginning with
\[
\delta=
\left(
\begin{array}{ccc}
0&0&c_1\\
a_2&0&0\\
0&1&0
\end{array}
\right)
\]
Now, with a general automorphism, 
\[
\delta'=
\frac{1}{\mu\nu-\rho\sigma}\left(
\begin{array}{ccc}
a     a_2 \mu + b \sigma & b \nu + a a_2 \rho& a^2 a_2  + c_1\\
      a_2 \mu^2 + \sigma^2& a_2 \mu 
        \rho + \nu \sigma& a a_2 \mu + b \sigma\\
      a_2 \mu \rho + \nu \sigma& \nu^2 + a_2 \rho^2& b \nu + a a_2 \rho
      \end{array}
\right)
\]
In order to preserve the conditions $a'_1=b'_1=a'_3=0$ we must solve the equations
\begin{eqnarray}
a     a_2 \mu + b \sigma=0 \\ 
b \nu + a a_2 \rho=0\\ 
a_2 \mu \rho + \nu \sigma=0
\end{eqnarray} 
Computing $(1)\nu-(2)\sigma$ we get
$
a     a_2 (\mu \nu-  \rho\sigma)=0
$.
Since $a_2\neq 0\neq \mu\nu-\rho\sigma$, we conclude $a=0$.
Going back to $\delta'$ but with $a=0$ we have
\[
\delta'=
\frac{1}{\mu\nu-\rho\sigma}\left(
\begin{array}{ccc}
 b \sigma & b \nu &  c_1\\
      a_2 \mu^2 + \sigma^2& a_2 \mu 
        \rho + \nu \sigma&  b \sigma\\
      a_2 \mu \rho + \nu \sigma& \nu^2 + a_2 \rho^2& b \nu 
      \end{array}
\right)
\]
Since $\nu$ and $\sigma$ can not be simultaneously zero, it must be $b=0$,
explicitly
\[
\delta'=
\frac{1}{\mu\nu-\rho\sigma}\left(
\begin{array}{ccc}
 0&0 &  c_1\\
      a_2 \mu^2 + \sigma^2& a_2 \mu 
        \rho + \nu \sigma& 0\\
      a_2 \mu \rho + \nu \sigma& \nu^2 + a_2 \rho^2&0
      \end{array}
\right)
\]
If $a_2<0$ then  there exists an automorphism with convenient $\mu$ and $\sigma$
such that $a_2'=0$, but this case will be considered later, so  assume $a_2>0$.
The condition $a_3'=0$ means
$a_2 \mu \rho + \nu \sigma=0$.
If $\mu\neq 0$, we can solve $\rho=-\nu\sigma/(a_2\mu)$ and get
\[
\delta'=
\left(
\begin{array}{ccc}
 0&0 & \frac{a_2c_1\mu}{\nu(a_2\mu^2+\sigma^2)}\\
\frac{      a_2 \mu}{\nu}&0&0\\
      0&\frac{ \nu}{\mu}&0
      \end{array}
\right)
\]
In order to get $b_3'=1$ we need $\mu=\nu$, so
$
\delta'=
\left(
\begin{array}{ccc}
 0&0 & \frac{a_2c_1}{a_2\mu^2+\sigma^2}\\
      a_2 &0&0\\
      0& 1&0
      \end{array}
\right)
$. 

\noindent It is clear that applying automorphisms with $\mu\neq 0$, $a_2$ is an invariant
and $c_1$ may be chosen up to positive scalar.
If $\mu=0$,  we get
$
\delta'=\frac{1}{-\rho \sigma}
\left(
\begin{array}{ccc}
 0&0 & c_1\\
  \sigma^2&\nu \sigma &0\\
    \nu \sigma & \nu^2+a_2\rho^2&0
      \end{array}
\right)
$. 

\noindent If we want to preserve $a_2'>0$ and $b_2'=0$ we need $\sigma\neq 0$ and $\nu =0$, then\
$ 
\delta'=
\left(
\begin{array}{ccc}
 0&0 & \frac{-c_1}{\rho \sigma}\\
 \frac{- \sigma}{\rho }&0 &0\\
  0 &\frac{- a_2\rho}{\sigma}&0
\end{array}
\right)
$. 
If $b_3'=1$ then $\sigma =-a_2\rho$, hence
$ 
\delta'=
\left(
\begin{array}{ccc}
 0&0 & \frac{c_1}{a_2\rho ^2}\\
a_2&0 &0\\
  0 &1&0
\end{array}
\right)
$. 
So, $a_2$ is an invariant in the isomorphism class and $c_1$ may be chosen up to
positive scalar. Hence, the list of isomorphisms classes in case $b_3\neq 0$,
 $a_2\neq 0$ consists of 
\[\delta _{a_2,c_1}=
\left(
\begin{array}{ccc}
 0&0 & c_1\\
a_2&0 &0\\
  0 &1&0
\end{array}
\right):a_2>0; c_1=0,\pm 1
\]

\noindent {\em Case $a_2=0$},  $\delta=
\left(
\begin{array}{ccc}
a_1&0&c_1\\
0&0&a_1\\
0&b_3&0
\end{array}
\right)
$. 
Applying $\phi_{\mu ,\nu,\rho,\sigma}^{a,b}$ with $a=0=b=\rho=\sigma$, $\nu=1$
\[
\delta '=
\left(
\begin{array}{ccc}
a_1&0&c_1/\mu\\
0&0&a_1\\
0&b_3/\mu &0
\end{array}
\right)
\]
so we may assume $b_3=1$.
Using a general automorphism we obtain
\[\delta '=\frac{1}{\mu\nu-\rho\sigma}
\left(
\begin{array}{ccc}
      a_1 \mu + b \sigma& b \nu + a_1 \rho& 2 a a_1 + b^2 + c_1\\
      \sigma^2& \nu \sigma& a_1 \mu + b \sigma\\
      \nu \sigma& \nu^2& b \nu + a_1 \rho
\end{array}
\right)
\]      
To preserve $a_2'=0$ we need $\sigma=0$, so
$
\delta '=\frac{1}{\mu\nu}
\left(
\begin{array}{ccc}
      a_1 \mu & b \nu + a_1 \rho& 2 a a_1 + b^2 + c_1\\
0& 0& a_1 \mu \\
0& \nu^2& b \nu + a_1 \rho
\end{array}
\right)
$. 
Requiring also $b'_1=0$, then $b=-\frac{a_1\rho}{\nu}$, 
$
\delta '=\frac{1}{\mu\nu}
\left(
\begin{array}{ccc}
      a_1 \mu & 0& 2 a a_1 + \left(\frac{a_1\rho}{\nu}\right)^2 + c_1\\
0& 0& a_1 \mu \\
0& \nu^2& 0
\end{array}
\right)
$.      
The condition  $b'_3=1$ implies $\mu=\nu$, then
$
\delta '=
\left(
\begin{array}{ccc}
      \frac{a_1}{ \mu} & 0&\frac{ 2 a a_1 + \left(\frac{a_1\rho}{\mu}\right)^2 + c_1}
{\mu^2}\\
0& 0& \frac{a_1 }{\mu }\\
0& 1& 0
\end{array}
\right)
$.

\noindent We distinguish the cases $a_1\neq 0$ and $a_1=0$.
If $a_1\neq 0$, set $\mu=a_1$ to get  $a_1'=1$, then
\[\delta '=
\left(
\begin{array}{ccc}
1 & 0& \left(2 a + \rho^2 + c_1\right)\\
0& 0&1 \\
0& 1& 0
\end{array}
\right)
\]      
then $a$ may be chosen such that $c'_1=0$, so the
isoclass has only one representative \newline
$
\delta _0=
\left(
\begin{array}{ccc}
1 & 0&0\\
0& 0&1 \\
0& 1& 0
\end{array}
\right)
$. 
In case $a_1=0$ we have
$
\delta '=
\left(
\begin{array}{ccc}
      0 & 0& c_1/\mu^2\\
0& 0& 0 \\
0& 1& 0
\end{array}
\right)
$, 
so $c_1$ may be chosen up to positive scalar. Hence the set of isoclasses 
with $a_2=0$ and $b_3\neq 0$
 is
\[\left\{
\delta _0=
\left(
\begin{array}{ccc}
1 & 0&0\\
0& 0&1 \\
0& 1& 0
\end{array}
\right);\
\delta _{c_1}=
\left(
\begin{array}{ccc}
0 & 0&c_1\\
0& 0&0 \\
0& 1& 0
\end{array}
\right):c_1=0,\pm 1\right\}
\]      

\noindent{\em Case  $b_3=0$}
If we take an automorphism $\phi_{\rho =0}$,
$ 
\delta '=
\left(
\begin{array}{ccc}
* & \frac{aa_3+b_1}{\mu}  &*\\
\frac{a_2\mu+2a_3\sigma}{\nu}& a_3&* \\
a_3&0& \frac{aa_3+b_1}{\mu}
\end{array}
\right)
$. 

{\em Subcase $b_3=0$ and $a_3\neq 0$}.
If  $a_3\neq 0$, we may choose  $\sigma$ and $\mu$ such that $a_2'=0$
and $a$ such that $b_1'=0$, so we may start with
\[\delta =
\left(
\begin{array}{ccc}
a_1 & 0 &c_1\\
0& a_3&a_1 \\
a_3&0& 0
\end{array}
\right)
\]     
By means of $\phi_{\mu ,\nu,\rho,\sigma}^{a,b}$ with $\rho=0=\sigma =a$, we get 
$\delta '=
\left(
\begin{array}{ccc}
\frac{a_1+a_3b}{\nu} & 0& \frac{c_1}{\mu\nu}\\
0& a_3&\frac{a_1+a_3b}{\nu} \\
a_3&0& 0
\end{array}
\right)
$.\newline 
Besides, if we take a convenient $b$, we get $a'_1=0$. So we may start with
$
\delta =
\left(
\begin{array}{ccc}
0 & 0&c_1\\
0& a_3&0 \\
a_3&0& 0
\end{array}
\right)
$;      
 applying $\phi_{\mu,\nu,\rho,\sigma}^{a,b}$ to the previous $\delta$, we get
\[\delta '=\frac{a_3}{\mu\nu-\rho\sigma}
\left(
\begin{array}{ccc}
         b \mu + a \sigma& a  \nu +  b \rho& 2 a  b + c_1/a_3\\
          2  \mu \sigma&  \mu \nu + \rho \sigma & b \mu + a \sigma\\
           \mu \nu + \rho \sigma& 2 \nu \rho& a \nu +  b \rho
\end{array}
\right)
\]
In order to preserve the
 conditions $a_1=0=a_2=b_1=b_3$, we need to solve the equations
$
          b \mu + a \sigma=0,\  a \nu + b\rho =0,\ 
           \mu \sigma =0,\ 
          \nu \rho =0
$. 
The first two equations imply $a=b=0$ and the last two say that the submatrix 
$
\left(
\begin{array}{cc}
         \mu & \rho\\
          \sigma&  \nu 
\end{array}
\right)
$ has to be one the two possibilities
$
\left(
\begin{array}{cc}
         \mu & 0\\
          0&  \nu 
\end{array}
\right)
\hbox{ or }
\left(
\begin{array}{cc}
         0 & \rho\\
          \sigma&  0
\end{array}
\right)
$. 
Hence, after appyling each of these automorphisms, 
the given $\delta =
\left(
\begin{array}{ccc}
0 & 0&c_1\\
0& a_3&0 \\
a_3&0& 0
\end{array}
\right)$
transforms respectively into
\[
\delta '=
\left(
\begin{array}{ccc}
0 & 0&\frac{c_1}{\mu\nu}\\
0& a_3&0 \\
a_3&0& 0
\end{array}
\right)
\hbox{ or }
\delta '=\left(
\begin{array}{ccc}
0 & 0&-\frac{c_1}{\rho\sigma}\\
0& -a_3&0 \\
-a_3&0& 0
\end{array}
\right)
\]
then we may choose $a_3$ up to sign and $c_1$ up to scalar.

We finally conclude that an exhaustive list of isomorphism classes in the case $a_3\neq 0$, $b_3=0$, consists of the Lie bialgebras 
with the following cobrackets
\[\delta  _{a_3,0}=
\left(
\begin{array}{ccc}
0 & 0&0\\
0& a_3&0 \\
a_3&0& 0
\end{array}
\right);\quad
\delta _{a_3,1}=\left(
\begin{array}{ccc}
0 & 0&1\\
0& a_3&0 \\
a_3&0& 0
\end{array}
\right)
:a_3>0
\]
{\em Subcase $b_3=0$ and $a_3= 0$}.
We start in this case with $\delta =\left(
\begin{array}{ccc}
a_1 & b_1&c_1\\
a_2& 0& a_1\\
0&0& b_1
\end{array}
\right)$; after applying a general automorphism it transforms into
$\delta '=$\[
\frac{1}{\mu\nu-\rho\sigma}
\left(\begin{array}{ccc}
 a_1 \mu + a a_2 
      \mu + b_1 \sigma & b_1 \nu + a_1 \rho  + a 
          a_2 \rho & _2 a a_1 + a^2 a_2 + _2 b b_1 + c_1\\
        a_2 \mu^2& a_2 \mu \rho & a_1 \mu + a a_2 \mu + b_1 \sigma \\
        a_2 \mu \rho & a_2 \rho ^2& b_1 \nu + a_1 \rho  + a a_2 \rho 
\end{array}
\right)
\]
We distinguish the cases  $a_2\neq 0$ or $a_2=0$.
{\em Suppose $b_3=0$, $a_3=0$ and $a_2\neq 0$}.
The maps preserving $b_3=0$ are those with $\rho=0$, then
\[
\delta '=\frac{1}{\mu\nu}
\left(\begin{array}{ccc}
 a_1 \mu + a a_2 
      \mu + b_1 \sigma & b_1 \nu  & 2 a a_1 + a^2 a_2 + 2 b b_1 + c_1\\
        a_2 \mu^2& 0 & a_1 \mu + a a_2 \mu + b_1 \sigma \\
        0 &0& b_1 \nu  
\end{array}
\right)
\]
Besides, if we take $\sigma=0$, $a=-\frac{a_1}{a_2}$, we get $a_1'=0$; so we may start with 
$\delta =\left(
\begin{array}{ccc}
0 & b_1&c_1\\
a_2& 0& 0\\
0&0& b_1
\end{array}
\right)
$.
Applying an automorphism with $\rho=0$, it maps to
\[
\delta '=\frac{1}{\mu\nu}
\left(\begin{array}{ccc}
 aa _2      \mu + b_1 \sigma & b_1 \nu  &  a^2 a_2 + 2 b b_1 + c_1\\
        a_2 \mu^2& 0 &  a a_2 \mu + b_1 \sigma \\
        0 &0& b_1 \nu  
\end{array}
\right)
\]
The condition $a_1'=0$ preserves if  $a=\frac{- b_1 \sigma }{a_2\mu}$, so 
$
\delta '=
\left(\begin{array}{ccc}
 0& \frac{b_1}{\mu}  &  \frac{ 2 b b_1 + c_1+\frac{b_1^2 \sigma^2 }{a_2\mu^2}}{\mu\nu}\\
       \frac{ a_2 \mu}{\nu}& 0 &  0\\
        0 &0&\frac{ b_1}{\mu  }
\end{array}
\right)
$; 
then $a_2$ can be chosen up to scalar. If we choose $a_2=1$ and also $\mu=\nu$,
then
$
\delta '=
\left(\begin{array}{ccc}
 0& b_1 /\mu  &  \frac{ 2 b b_1 + c_1+\frac{b_1^2 \sigma^2 }{\mu^2}}{\mu^2}\\
       1& 0 &  0\\
        0 &0& b_1/\mu  
\end{array}
\right)
$. 
Hence  $b_1$ is determined up to scalar, so the possibilities are $b_1=0$ or $b_1=1$.

\noindent {\em Subcase $b_1=0$}:
$
\delta '=
\left(\begin{array}{ccc}
 0& 0  &  \frac{ c_1}{\mu^2}\\
       1& 0 &  0\\
        0 &0& 0 
\end{array}
\right)
$, 
then $c_1$ may be chosen up to positive scalar.
The list of isomorphisms classes in this case is given by 
$
\left\{
\delta =
\left(\begin{array}{ccc}
 0& 0  &  c_1\\
       1& 0 &  0\\
        0 &0& 0 
\end{array}
\right):c_1=0,\ \pm1\right\}
$. 
{\em Subcase $b_1=1$}. Let $\mu=1$ then
$
\delta '=
\left(\begin{array}{ccc}
 0&1 &  2b+c_1+\sigma ^2\\
1&0&0\\
        0 &0&1  
\end{array}
\right)
$. 
We may choose $c_1=0$. In this case there is a unique isoclass given by
$
\delta =
\left(\begin{array}{ccc}
 0&1 &  0\\
1&0&0\\
        0 &0&1  
\end{array}
\right)
$.\\ 
\noindent {\em Suppose $b_3=0$, $a_3=0$ and $a_2=0$}.
We start with $
\delta =
\left(\begin{array}{ccc}
 a_1&b_1 & c_1\\
0&0&a_1\\
        0 &0&b_1  
\end{array}
\right)$.
After applying a general isomorphism it is mapped into
\[
\delta ' =\frac{1}{\mu\nu-\rho\sigma}
\left(\begin{array}{ccc}
 a_1\mu +b_1\sigma &a_1\rho+b_1\nu  &2aa_1+2bb_1+ c_1\\
0&0&a_1\mu +b_1\sigma \\
        0 &0&a_1\rho+b_1\nu 
\end{array}
\right)
\]
If the pair $(a_1,b_1)\neq (0,0)$ then there exists a linear transformation
$\left(\begin{array}{cc}
 \mu &\sigma \\
\rho&\nu \end{array}
\right) $ of determinant $1$, such that
$(a_1\mu +b_1\sigma ,a_1\rho+b_1\nu )=(1,0)$. 
This says that the cobracket $\delta$ belongs to same isoclass that one with 
$a_1=1$ and $b_1=0$.
If we make such a choice, namely  
$
\delta =
\left(\begin{array}{ccc}
 1&0 & c_1\\
0&0&1\\
        0 &0&0
\end{array}
\right)$, then it transforms under a general automorphism into
\[
\delta '=\frac{1}{\mu\nu-\rho\sigma}
\left(\begin{array}{ccc}
 \mu&\rho & 2a+c_1\\
0&0&\mu\\
        0 &0&\rho
\end{array}
\right)\]
so, if we whish to preserve $b'_1=0$, it must be $\rho =0$, then $\delta '$ equals
$
\delta '=
\left(\begin{array}{ccc}
\frac{1}{ \nu}&0 &\frac{2a+c_1}{\mu\nu}\\
0&0&\frac{1}{ \nu}\\
        0 &0&0
\end{array}
\right)$. 
In order to preserve also $a'_1=1$ we need $\nu=1$; then
$\delta '=
\left(\begin{array}{ccc}
1&0 &\frac{2a+c_1}{\mu}\\
0&0&1\\
        0 &0&0
\end{array}
\right)$
and we may choose $c_1=0$. Hence, in this case we get only one representative given by
$
\delta =
\left(\begin{array}{ccc}
1&0 &0\\
0&0&1\\
        0 &0&0
\end{array}
\right)
$.

{\em If the pair $(a_1,b_1)= (0,0)$} then 
\[
\delta =
\left(\begin{array}{ccc}
0&0 &c_1\\
0&0&0\\
        0 &0&0
\end{array}
\right)
\hbox{ and }\delta '=
\left(\begin{array}{ccc}
0&0 &\frac{c_1}{\mu\nu-\rho\sigma}\\
0&0&0\\
        0 &0&0
\end{array}
\right)
\]
We conclude that there are two representatives for this case, namely
\[
\delta =
\left(\begin{array}{ccc}
0&0 &c_1\\
0&0&0\\
        0 &0&0
\end{array}
\right):c_1=0\hbox{ or }c_1=1.
\]

\section{Lie bialgebra structures on $\r'_{3,\lambda}$}

In basis $\{h,x,y\}$, the Lie algebra $\r'_{3,\lambda}$ has the following brackets:
$
[h,x]=\lambda x-y$, $ [h,y]=x+\lambda y$, $ [x,y]=0
$.
Remark that, in the basis $\{x,y\}$, the linear transformation $\ad_h$ has matrix
$\left(\begin{array}{cc}
\lambda&-1\\
1&\lambda
\end{array}\right)$. Notice the similarity with the matrix associated to multiplication
by the number $\lambda-i$.
A straightforward computation shows
$(\Lambda ^2\g)^\g=0$ if $\lambda\neq 0$
and $(\Lambda ^2\g)^\g=\R x\w y$ if $\lambda =0$,  for $\g=\r'_{3,\lambda}$.

\

\noindent {\bf 1-cocycle condition}. Sea $\delta :\g\to\Lambda ^2\g$ a 1-cocycle.
From $0=\delta[x,y]=[\delta x,y]+[x,\delta y]$ we get
\begin{eqnarray*}
0&=&[a_1x\w y+a_2 y\w h+a_3 h\w x,y]+[x,b_1x\w y+b_2 y\w h+b_3 h\w x]\\ 
&=&(a_2+a_3 \lambda -b_2 \lambda +b_3 )y\w x
\end{eqnarray*}
hence, {$a_2+a_3\lambda-\lambda b_2+b_3=0$}. The 1-cocycle condition for $[h,x]=\lambda x -y$ gives
\begin{eqnarray*}
\lambda\delta x-\delta y&=&
[\delta h,x]+[h,\delta x]\\
&=&[c_1x\w y+c_2 y\w h+c_3 h\w x,x]+[h,a_1x\w y+a_2 y\w h+a_3 h\w x]\\
&=&(-\lambda c_2 +c_3+2\lambda a_1) x\w y+(a_2\lambda +a_3)y\w h+(-a_2+\lambda a_3) h\w x)
\end{eqnarray*}
So
\begin{eqnarray*}
\lambda a_1-b_1&=&-\lambda c_2 +c_3+2\lambda a_1\\
\lambda a_2-b_2&=&a_2\lambda +a_3\\
\lambda a_3-b_3&=&-a_2+\lambda a_3
\end{eqnarray*}
then 
$\lambda a_1+b_1=\lambda c_2 - c_3$, $-b_2=a_3$, $b_3=a_2$.
Similarly, 
$[h,y]=x+\lambda y$ gives
\begin{eqnarray*}
 \delta x+\lambda \delta y&=&[\delta h,y]+[h,\delta y]\\
&=&
(c_2+c_3 \lambda )y\w x+2\lambda b_1 x\w y+(b_2\lambda +b_3)y\w h+(-b_2+\lambda b_3) h\w x
\end{eqnarray*}
then
\[a_1+\lambda b_1=-c_2-\lambda c_3+2\lambda b_1,\quad 
a_2+\lambda b_2=\lambda b_2+b_3,\quad
a_3+\lambda b_3=-b_2+\lambda b_3\]
Summarizing, we get
\[a_2=b_3,\quad a_3=-b_2,\]
\[\lambda b_2-b_3=\lambda a_3+a_2,\quad 
\lambda b_1-a_1=c_2+\lambda c_3,\quad 
b_1+\lambda a_1=\lambda c_2 - c_3\]
The last two equations are equivalent to
\[
c_2=\frac{a_1(\lambda^2-1)+2\lambda b_1}{1+\lambda^2},\quad 
c_3=\frac{b_1(\lambda^2-1)-2\lambda a_1}{1+\lambda^2}
\]
while the first three ones are equivalent to
\[a_2=-\lambda a_3,\ \
b_2=-a_3,\ \ 
b_3=-\lambda a_3\]
The  general 1-cocycle, in basis $\{x,y,h\}$, $\{x\w y, y\w h, h\w x\}$
is given by
\[
\left(
\begin{array}{ccc}
a_1&b_1&c_1\\
-\lambda a_3&-a_3&\frac{a_1(\lambda^2-1)+2\lambda b_1}{1+\lambda^2}\\
a_3&-\lambda a_3&\frac{b_1(\lambda^2-1)-2\lambda a_1}{1+\lambda^2}
\end{array}\right)
\]
For a 1-cocycle, the co-Jacobi condition is
$
2 \frac{ (a_1^2 + b_1^2)\lambda + a_3 c_1(1 + \lambda ^2)}{1 + \lambda ^2}=0
$. 


\begin{proposition}
The automorphism group of the Lie algebra $\r'_{3,\lambda}$, expresed as matrices
in basis $\{x,y,h\}$,  is the following  subgroup of $\GL(3, \R)$
\[
\left\{
\left(\begin{array}{ccc}
\mu&-\sigma&a\\
\sigma&\mu&b\\
0&0&1
\end{array}
\right): \mu,\sigma,a,b\in\R, \ \mu^2+\sigma^2\neq 0
\right\}
\]
\end{proposition}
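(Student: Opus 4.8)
The plan is to first pin down the shape of any automorphism using the derived ideal, and then to collapse the whole problem to a single operator equation on $[\g,\g]$. First I would record that $[\g,\g]=\cl{x,y}$: the vectors $[h,x]=\lambda x-y$ and $[h,y]=x+\lambda y$ are linearly independent, since their coordinates in $\{x,y\}$ have determinant $\lambda^2+1\neq 0$, while $[x,y]=0$. Being the derived ideal, $\cl{x,y}$ is preserved by every $\psi\in\Aut(\g)$ (Lemma \ref{lgg}). Hence each such $\psi$ is block--triangular: it restricts to an invertible $B:=\psi|_{[\g,\g]}$ on $\cl{x,y}$, and on the one--dimensional quotient $\g/[\g,\g]$ it acts by a nonzero scalar $c$, so that $\psi(h)=ch+v$ with $v\in\cl{x,y}$ and $c\neq 0$. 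The relation $[x,y]=0$ is automatically respected, because $\psi(x),\psi(y)\in\cl{x,y}$ and this subspace is abelian; thus the only genuine constraints come from $[h,x]$ and $[h,y]$.

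Next I would turn those two constraints into one identity. Writing $A:=\ad_h|_{[\g,\g]}$ and using that $v$ lies in the abelian ideal, for every $w\in\cl{x,y}$ one gets
\[
B(Aw)=\psi([h,w])=[\psi(h),\psi(w)]=[ch+v,Bw]=c\,A(Bw),
\]
that is, $BA=cAB$ as endomorphisms of $[\g,\g]$. So the task reduces to describing all invertible $B$ together with scalars $c\neq 0$ satisfying $BA=cAB$.

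The \emph{main obstacle} is solving this equation, and here I would invoke the complex structure noted just before the statement: $[\g,\g]\cong\C$ with $A$ acting as multiplication by $\lambda-i$. Any $\R$--linear $B$ has the form $B(z)=\alpha z+\beta\overline{z}$ with $\alpha,\beta\in\C$. Comparing the coefficients of $z$ and $\overline{z}$ in $BA=cAB$ yields $\alpha(1-c)=0$ and $\beta\bigl(\overline{\lambda-i}-c(\lambda-i)\bigr)=0$. Since $\overline{\lambda-i}-(\lambda-i)=2i\neq 0$, taking $c=1$ forces $\beta=0$, so $B$ is multiplication by $\alpha=\mu+i\sigma$, which in the real basis $\{x,y\}$ is precisely $\left(\begin{smallmatrix}\mu&-\sigma\\\sigma&\mu\end{smallmatrix}\right)$, invertible exactly when $\mu^2+\sigma^2\neq 0$. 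The delicate point is ruling out $c\neq 1$: that case demands $\alpha=0$ and $\overline{\lambda-i}=c(\lambda-i)$, which has no real solution with $\beta\neq 0$ once $\lambda\neq 0$, so $c=1$ necessarily. This is exactly where the hypothesis is used: for $\lambda=0$ an extra orientation--reversing automorphism with $c=-1$ would appear. Writing $v=ax+by$ then gives the stated matrix.

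Finally I would verify the converse inclusion, namely that every matrix $\left(\begin{smallmatrix}\mu&-\sigma&a\\\sigma&\mu&b\\0&0&1\end{smallmatrix}\right)$ with $\mu^2+\sigma^2\neq 0$ really is a Lie algebra automorphism. This is a short direct check of $\psi([h,x])=[\psi(h),\psi(x)]$ and $\psi([h,y])=[\psi(h),\psi(y)]$: the terms $ax+by$ drop out since $\cl{x,y}$ is abelian, and the two sides agree because $B$ commutes with $A$; the identity $\psi([x,y])=0$ is immediate, and the determinant $\mu^2+\sigma^2$ guarantees bijectivity. Together with the previous paragraph this identifies $\Aut(\r'_{3,\lambda})$ with the claimed subgroup.
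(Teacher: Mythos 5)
Your argument is correct and runs structurally parallel to the paper's proof: both use that the derived ideal $\cl{x,y}$ is preserved by any automorphism, write $\psi(h)=ch+v$ with $c\neq 0$, and reduce everything to the commutation relation between $B=\psi|_{[\g,\g]}$ and $A=\ad_h|_{[\g,\g]}$ (the paper states it in matrix form as $c\,MJ=JM$ with $J=\left(\begin{smallmatrix}\lambda&-1\\1&\lambda\end{smallmatrix}\right)$). Where you diverge is in how this relation is solved: the paper takes determinants, asserts that this gives $c=1$, and then computes the commutant of $J$; you split $B$ into complex-linear and antilinear parts, $B(z)=\alpha z+\beta\ra{z}$, and run a case analysis on $c$.

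Your route buys something real, because the paper's determinant step is a non sequitur: determinants only yield $c^2=1$, and your case analysis shows that $c=-1$ genuinely occurs when $\lambda=0$. Taking $\alpha=0$, $\beta=1$, i.e. $x\mapsto x$, $y\mapsto -y$, $h\mapsto -h$, gives an automorphism of $\r'_{3,0}$ not of the stated form (check: $[h,x]=-y\mapsto y=[-h,x]$ and $[h,y]=x\mapsto x=[-h,-y]$). The one point to correct in your write-up is the phrase ``this is exactly where the hypothesis is used'': the proposition carries no hypothesis $\lambda\neq 0$ --- the algebra $\r'_{3,\lambda}$ is defined for all $\lambda\geq 0$, and the paper applies this proposition in its subsequent $\lambda=0$ analysis. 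So what you have actually proved is a corrected statement: the proposition holds for $\lambda\neq 0$ and fails for $\lambda=0$, where $\Aut(\r'_{3,0})$ is generated by the stated subgroup together with the conjugation-type automorphism above. This is a genuine (if small) error in the paper, with a downstream effect: at $\lambda=0$ the extra automorphism carries the representative $\delta_{a_3,\lambda=0}$ to $-\delta_{a_3,\lambda=0}=\delta_{-a_3,\lambda=0}$, so the parameters $a_3$ and $-a_3$ give isomorphic Lie bialgebras and the $1$-parameter family $a_3\in\R$ in the paper's classification theorem is redundant there.
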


\begin{proof} 
Using that $[\g,\g]$ is generated by $x$ and $y$ and it is invariant under automorphism, we conclude
that any automorphism
$\phi$ restricted to $[\g,\g]$ must be of the form
$\phi(x)=\mu x+\rho y$ and $\phi y=\sigma x+\nu y$, with $\mu\nu-\rho\sigma\neq 0$.
Also, writing $\phi(h)=ax+by+ch$, since $\phi$ is an automorphism of the Lie algebra, we have
\[c[h,\phi x]=
[\phi h,\phi x]=\lambda\phi x-\phi y,\quad
c[h,\phi y]=[\phi h,\phi y]=\phi x+\lambda \phi y,
\]
in matrix notation,
\[c
\left(\begin{array}{cc}
\mu&\sigma\\
\rho&\nu
\end{array}
\right)
\left(\begin{array}{cc}
\lambda&-1\\
1&\lambda
\end{array}
\right)
=
\left(\begin{array}{cc}
\lambda&-1\\
1&\lambda
\end{array}
\right)
\left(\begin{array}{cc}
\mu&\sigma\\
\rho&\nu
\end{array}
\right)
\]
Taking determinant we get $c=1$, and if
$\left(\begin{array}{cc}
\mu&\sigma\\
\rho&\nu
\end{array}
\right)$ commutes with the matrix 
$\left(\begin{array}{cc}
0&-1\\
1&0
\end{array}
\right)$ then it must be of the form 
$\left(\begin{array}{cc}
\mu&-\sigma\\
\sigma&\mu
\end{array}
\right)$.
\end{proof}
{\bf Action of the automorphisms group on 1-cocycles}.
The efect of an arbitrary automorphism on a general 1-cocycle 
 is the following $\delta '=$
\[=\left(\begin{array}{ccc}
\frac{a_1 \mu  + b_1 \sigma - a_3 (\mu ( a \lambda -b)  +  \sigma (a+ b \lambda ))}{\mu  ^2+\sigma ^2}&
\frac{b_1 \mu -a_1 \sigma   - a_3 (b (\lambda  \mu   + \sigma) + a  (\mu  - \lambda  \sigma))}{\mu ^2+\sigma ^2}&
c_1'\\
-a_3\lambda & -a_3 &*\\
a_3 & -a_3\lambda &*
\end{array}
\right)\]
with
$$c_1'=\frac{c_1 (1 + \lambda ^2) -   \lambda  (-2 a (b_1 + a_1 \lambda ) + a^2 a_3 (1 + \lambda ^2) +b (2 a_1 - 2 b_1 \lambda  + a_3 b (1 + \lambda ^2)))}
{(\mu ^2+\sigma ^2)(1+\lambda ^2)}
$$
Recall that co-Jacobi condition reads 
$
2 \frac{ (a_1^2 + b_1^2)\lambda + a_3 c_1(1 + \lambda ^2)}{1 + \lambda ^2}=0
$, 
or, equivalently
$
 (a_1^2 + b_1^2)\lambda + a_3 c_1(1 + \lambda ^2)=0$. 
We will make some simplifications using the automorphism group.

\noindent{\em Case $a_3\neq 0$}.
If we choose an automorphism with $\mu=1$, $\sigma=0$, then $\delta'$ has $a_1'=a_1+a_3(b-\lambda a)$ 
and $b_1'=b_1-(\lambda b+a)$, so we can choose
$a$ and $b$ such that $a_1'=0=b'_1$; hence, we may suppose from the begining that $a_1=0$
and $b_1=0$. But now $a_1=b_1=0$ together with co-Jacobi imply $a_3 c_1(1 + \lambda ^2)=0$
so $a_3c_1=0$; since $a_3\neq 0$,  $c_1=0$, hence
\[
\delta_{a_3,\lambda }=
\left(
\begin{array}{ccc}
0&0&0\\
-\lambda a_3&-a_3&0\\
a_3&-\lambda a_3&0
\end{array}\right)
\]
To compute the automorphism group, notice that  $\phi_{\mu,\nu,a,b}$
transforms $\delta_{a_1=b_1=c_1=0}$ into $\delta'$ with $c_1'=-\lambda a_3\frac{a^ 2+b^ 2}{
\mu^ 2+\nu^ 2}$,
so in case $\lambda\neq 0$ the only possibility to preserve $c_1=0$ is $a=b=0$.
Hence,  the automorphism group in case $a_3\neq 0$, $\lambda \neq 0$ is
\[
\left\{
\left(\begin{array}{ccc}
\mu&-\sigma&0\\
\sigma&\mu&0\\
0&0&1
\end{array}
\right): \mu,\sigma\in\R, \ \mu^2+\sigma^2\neq 0
\right\}
\]
But if  $\lambda =0$,  $\delta_{a_1=b_1=c_1=0}$ transforms by  $\phi_{\mu,\nu,a,b}$
into 
\[\delta '=
\left(
\begin{array}{ccc}
a_3\frac{b\mu -a\sigma}{\mu ^2+\sigma ^2} 
& -a_3\frac{b\sigma +a\mu}{\mu ^2+\sigma ^2} &  0\\
0 & -a_3&  - a_3\frac{b\mu -a\sigma}{\mu ^2+\sigma ^2}\\
a_3  &0 & a_3\frac{b\sigma +a\mu}{\mu ^2+\sigma ^2}
\end{array}\right)
\]
Since $\left(
\begin{array}{cc}
\mu & -\sigma\\
\sigma & \mu 
\end{array}\right)$ is invertible, the only way to preserve $a_1=b_1=0$
is with $a=b=0$. Hence,  the automorphism group in case $a_3\neq 0$, $\lambda = 0$ is
the same as in case $\lambda \neq 0$.

\noindent {\em Case $a_3=0$}. If $\lambda \neq 0$, co-Jacobi implies  $a_1=b_1=0$;
conjugation by   $\phi_{\mu,\nu,a,b}$ gives
\[\delta =
\left(
\begin{array}{ccc}
0& 0 &  c_1\\
0 & 0&  0\\
0  &0 & 0
\end{array}\right)
\mapsto
\delta '=
\left(
\begin{array}{ccc}
0& 0 &  \frac{c_1}{\mu ^2+\sigma ^2}\\
0 & 0&  0\\
0  &0 & 0
\end{array}\right)
\]
so $c_1$ can be choosen up to positive scalar. We may take $0,\pm 1$ as representatives.
In case $a_3=0$ but $\lambda = 0$, co-Jacobi identity gives no further information. We study the action of the automorphisms group in this case.
$\delta_{a_3=0,\lambda = 0}$ transforms by  $\phi_{\mu,\nu,a,b}$
into 
\[\delta '=
\left(
\begin{array}{ccc}
\frac{a_1\mu +b_1\sigma}{\mu ^2+\sigma ^2} 
& \frac{-a_1\sigma +b_1\mu}{\mu ^2+\sigma ^2}
 & \frac{c_1}{\mu ^2+\sigma ^2}  \\
0 & 0  & -\frac{a_1\mu +b_1\sigma}{\mu ^2+\sigma ^2}  \\
0  &0 & \frac{a_1\sigma -b_1\mu}{\mu ^2+\sigma ^2}
\end{array}\right)
\]
The pair $(a_1,b_1)$ transforms as
 $a_1+ib_1\mapsto \frac{a_1+ib_1}{\mu +i\sigma}$ in the complex plane.
We know that there are two orbits: $(a_1,b_1)=(0,0)$ wich has trivial action
and it gives the same cobrackets as for $\lambda \neq 0$, and
$\{(a_1,b_1)\neq (0,0)\}$, which has free $\C ^*$-action. For the second case,
one can take $(a_1,b_1)=(1,0)$ as a representative. 
We conclude that a set of representatives of Lie cobrackets in case $a_3=0$, $\lambda = 0$ is given by
\[\delta =
\left(
\begin{array}{ccc}
0& 0 &  c_1\\
0 & 0&  0\\
0  &0 & 0
\end{array}\right):c_1=0,\pm 1,\
\delta_{c_1}^{(1,0)} =
\left(
\begin{array}{ccc}
1& 0 &  c_1\\
0 & 0&  -1\\
0  &0 & 0
\end{array}\right):c_1\in\R
\]
The automorphisms group of the Lie bialgebra with $\delta_{c_1}^{(1,0)} $
is
\[
\left\{
\left(\begin{array}{ccc}
1& 0&a\\
0 & 1 &b\\
0&0&1
\end{array}
\right): a,b\in\R .
\right\}
\]
\begin{theorem}
The set of isomorphism classes of Lie bialgebra with underlying Lie algebra $\r'_{3,\lambda}$ in the case
$\lambda\neq 0$ is given by the following list of cobrackets:
\[
\delta_{a_3,\lambda }=\left(
\begin{array}{ccc}
0&0&0\\
-\lambda a_3&-a_3&0\\
a_3&-\lambda a_3&0
\end{array}\right): a_3\in\R\hbox{ and }
\delta_{0,c_1}=\left(
\begin{array}{ccc}
0&0&\pm 1\\
0&0&0\\
0&0&0
\end{array}\right).
\]
In case $\lambda =0$, we have the previous set specialized in  $\lambda =0$, 
together with the following 1-parameter familly
$\delta_{c_1}^{(1,0)} =
\left(
\begin{array}{ccc}
1& 0 &  c_1\\
0 & 0&  -1\\
0  &0 & 0
\end{array}\right):c_1\in\R
$.
\end{theorem}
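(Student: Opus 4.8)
The plan is to follow the four-step strategy stated in the general discussion, using the ingredients already assembled above: the general $1$-cocycle, the co-Jacobi condition $(a_1^2+b_1^2)\lambda+a_3c_1(1+\lambda^2)=0$, the automorphism group, and the explicit formula for its action $\delta'=(\phi\w\phi)^{-1}\delta\phi$. Since the theorem is a list of orbit representatives, the work reduces to organizing the orbit analysis. First I would observe that the entries $-\lambda a_3,\ -a_3,\ a_3$ in the lower $2\times 3$ block of $\delta$ transform to themselves (they are not divided by $\mu^2+\sigma^2$), so $a_3$ is a genuine numerical invariant of the isomorphism class; this makes $a_3$ the natural primary case distinction.

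In the case $a_3\neq0$ the idea is to use the freedom in the translation parameters. Restricting to automorphisms with $\mu=1,\ \sigma=0$, the formulas $a_1'=a_1+a_3(b-\lambda a)$ and $b_1'=b_1-(a+\lambda b)$ form a linear system in $(a,b)$ whose coefficient matrix is invertible precisely because $a_3\neq0$, so I can force $a_1=b_1=0$. With $a_1=b_1=0$, co-Jacobi collapses to $a_3c_1(1+\lambda^2)=0$, hence $c_1=0$; this produces exactly $\delta_{a_3,\lambda}$, and by invariance of $a_3$ these are pairwise non-isomorphic, giving a true $1$-parameter family.

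Next, in the case $a_3=0$ the argument bifurcates on $\lambda$. If $\lambda\neq0$, co-Jacobi reads $(a_1^2+b_1^2)\lambda=0$, and here the one genuinely arithmetic point enters: over $\R$ the equation $a_1^2+b_1^2=0$ forces $a_1=b_1=0$ (this is exactly where reality is used, and is why the complex classification would differ). Only $c_1$ survives, transforming by the strictly positive factor $(\mu^2+\sigma^2)^{-1}$, so I normalize it to $0,\pm1$ to obtain $\delta_{0,c_1}$. If instead $\lambda=0$, co-Jacobi becomes vacuous, and I would analyze the induced $\C^*$-action $a_1+ib_1\mapsto(a_1+ib_1)/(\mu+i\sigma)$ on the pair $(a_1,b_1)$: it has exactly two orbits, the fixed point $(0,0)$ (which reproduces the $c_1=0,\pm1$ representatives) and the free orbit $\{(a_1,b_1)\neq(0,0)\}$, normalized to $(1,0)$.

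The hard part will be justifying that the last family $\delta_{c_1}^{(1,0)}$ is a genuine $\R$-parameter family with no further identifications. To settle this I would compute the stabilizer of the normalized data $a_3=0,\ (a_1,b_1)=(1,0)$: preserving $(1,0)$ under the $\C^*$-action forces $\mu=1,\ \sigma=0$, and when $\lambda=0$ the expression for $c_1'$ degenerates to $c_1'=c_1/(\mu^2+\sigma^2)=c_1$, so the residual translations leave $c_1$ untouched. Thus distinct $c_1$ yield non-isomorphic bialgebras, and the stabilizer is precisely the stated translation group. Collecting the three cases gives the full list; its non-redundancy follows from the invariance of $a_3$, the positivity/sign normalizations of $c_1$, and this final stabilizer computation.
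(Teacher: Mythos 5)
Your proposal is correct and follows essentially the same route as the paper: the same case split on $a_3\neq 0$ versus $a_3=0$ (then on $\lambda$), the same normalization of $(a_1,b_1)$ via the translation parameters when $a_3\neq 0$, co-Jacobi forcing $c_1=0$ there, reality forcing $a_1=b_1=0$ when $a_3=0$, $\lambda\neq 0$, and the same $\C^*$-action $a_1+ib_1\mapsto (a_1+ib_1)/(\mu+i\sigma)$ with stabilizer computation when $\lambda=0$. Your explicit observations that $a_3$ is invariant (being untouched by the action) and that the stabilizer of $(1,0)$ fixes $c_1$ are exactly what the paper uses, just stated more explicitly than in the paper's write-up.
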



\section{Lie bialgebra structures on $\su(2)$}

{\bf 1-Cocycles}.
Consider $\su(2)$ as the $\R$-span of the following matrices:
\[u=\frac{1}{2}\left(\begin{array}{cc}
i&0\\
0&-i
\end{array}\right),\
v=\frac{1}{2}\left(\begin{array}{cc}
0&i\\
i&0
\end{array}\right),\
w=\frac{1}{2}\left(\begin{array}{cc}
0&-1\\
1&0
\end{array}\right)\]
then the Lie brackets verify
$[u,v]=w$, $[v,w]=u$, $[w,u]=v$. This is a simple Lie algebra, then every 1-cocycle is a 1-coboundary.
If $r=\alpha u\w v+\beta v\w w+\gamma w\w u\in\Lambda^2\su(2)$, 
with $\alpha$, $\beta$, $\gamma\in\R$, 
the 1-cocycle associated to it is
$\delta(x)=\ad_x(r)=[x,r]$ for any $x\in\su(2)$. 
The Co-Jacobi condition 
for $\delta$ is equivalent to $[r,r]\in (\Lambda ^3\g)^\g$ with
$[r,r]=2(\alpha ^2+\beta ^2+\gamma ^2)u\w w\w v$, so it is satisfied for any $r$ since  $(\Lambda ^3\g)^\g=\Lambda ^3\g$ for $\g=\su(2)$.
We get
\begin{eqnarray*}\delta(u)=\gamma u\w v-\alpha w\w u;\ 
\delta(v)=-\beta u\w v+\alpha v \w w;\ 
\delta(w)=-\gamma v\w w+\beta w \w u;\ 
\end{eqnarray*}
or, in matrix notation
$
\delta =
\left(
\begin{array}{ccc}
\gamma &-\beta &0\\
0&\alpha &-\gamma \\
-\alpha &0&\beta 
\end{array}
\right).
$

{\bf Automorphisms and isomorphism classes}.
If $U\in \SU(2)$, then conjugation by $U$ gives an automorphism of the
Lie algebra $\su(2)$.
If we parametrize such a matrix by
\[
U=
\left(\begin{array}{cc}
a+ib&c+id\\
-c+id&a-ib
\end{array}\right)
\]
with $a,b,c,d\in\R$,   $a^2+b^2+c^2+d^2=1$,  we have the automorphism 
$\phi_{U}(M)=UMU^{-1}$,
where $M\in\su(2)$.
Straightforward computation shows the following:
\[
\begin{array}{rcccc}
\phi_{U}(u)&=&(a^2+b^2-c^2-d^2)u& +2(-ac+bd)v&-2 (bc+ad)w\\
\phi_{U}(v)&=&2(ac+bd)u& +(a^2-b^2-c^2+d^2)v&+2 (ab-cd)w\\
\phi_{U}(w)&=&-2(bc-ad)u&-2(ab+cd)v&+(a^2-b^2+c^2-d^2)w
\end{array}
\]
and $\delta':=(\phi\w\phi)^{-1}\delta\phi$ is given by $\frac{\delta'}{2}=$
\[
\left(\begin{array}{c|c|c}
\alpha (a b-cd)+\beta (ac+bd)&\alpha (ad+bc)+\gamma (ac-bd)&   0  \\
+\frac12 \gamma (a^2-b^2-c^2+d^2) &+\frac12 \beta  (-a^2-b^2+c^2+d^2) &         \\
\hline 0                                 &\frac12 \alpha (a^2-b^2+c^2-d^2)   & \alpha (cd- ab) -\beta (bd+ac)\\
                                       &\beta (ad-bc)-\gamma (ab+cd) &+\frac12 \gamma (- a^2  +b^2 +c^2 -d^2)\\
\hline
\frac12 \alpha (-a^2+b^2-c^2+ d^2) & 0&- \alpha (ad+bc)+\gamma(bd-ac)\\
+\beta (bc-ad) +\gamma (ab  + cd)&&+\frac12 \beta (a^2+b^2-c^2-d^2)
\end{array}\right)
\]

Suppose $\gamma \neq 0$, and take $b=d=0$,
then
\[
\delta'=\left(\begin{array}{ccc}
\gamma (a^2-c^2)+2\beta ac&2\gamma ac+\beta  (c^2-a^2) &        0 \\
0                              &\alpha (a^2+c^2)       & -2\beta ac+\gamma (c^2- a^2)\\
-\alpha  (a^2+c^2) & 0& -2\gamma ac+\beta (a^2-c^2)
\end{array}\right)
\]
So, if 
$c=a\left(\frac{\beta +\sqrt{\beta ^2+\gamma ^2}}{\gamma }\right)$ then 
$\gamma '=0$. 
Assuming $\gamma =0$ and letting $a=d=0$, we get 
\[
\delta'=
\left(\begin{array}{ccc}
0&2\alpha bc+\beta  (-b^2+c^2)&   0  \\
0&-\alpha (b^2-c^2) -2\beta bc     &0\\
\alpha  (b^2-c^2) +2\beta bc& 0& - 2\alpha bc+\beta (b^2-c^2)
\end{array}\right)
\]
So if  $\beta \neq 0$,  we can use  $\phi_{U}$  with $a=d=0$, 
$c=b\left(\frac{-\alpha +\sqrt{\alpha ^2+\beta ^2}}{\beta }\right)$ then $\delta'$ has $\gamma '=0$ and $\beta '=0$. So we may assume from the beginning
$\gamma =\beta =0$.
We get 
\[
\delta'=\alpha
\left(\begin{array}{ccc}
2 (ab-cd)&2 (ad+bc)&   0  \\
0                                      & (a^2-b^2+c^2-d^2)  & 2  (cd-ab)\\
 (-a^2+b^2-c^2+d^2)& 0& -2(ad+bc)\\
\end{array}\right)
\]
In case $\alpha =0$, we have the trivial cobracket. If $\alpha \neq 0$ , the condition on the automorphism preserving
the condition $\beta =\gamma =0$ is given by the equations:
\[ab=cd,\ ad=-bc\]
These equations imply $a^2b=-bc^2$, so if $b\neq 0$, then $a=c=0$ and $\delta'$ is given by
\[
\delta'=
\left(\begin{array}{ccc}
0&0&   0  \\
0  &-\alpha (b^2+d^2)  & 0\\
\alpha ( b^2 +  d^2)& 0&0\\
\end{array}\right)
\]
but $b^2+d^2=1$, so $\delta_\alpha\cong \delta_{-\alpha}$.

\noindent Next we consider the case $b=0$. We have $cd=0=ad$.
If $d=0$, 
\[\delta'=
\left(\begin{array}{ccc}
0&0&   0  \\
0  &\alpha (a^2+c^2)  & 0\\
-\alpha (a^2 + c^2)& 0&0\\
\end{array}\right)=\delta
\]
If $d\neq 0$ then $a=c=0$, $d=\pm 1$ and
$
\delta'=
\left(\begin{array}{ccc}
0&0&   0  \\
0  &-\alpha d^2 & 0\\
\alpha d^2& 0&0\\
\end{array}\right)=-\delta
$. 

\begin{proposition}
The set of isomorphism classes of Lie bialgebras with underlying
Lie algebra $\su(2)$ is given by the 1-parameter family
\[
\delta_\alpha=
 \left(\begin{array}{ccc}
0&0&   0  \\
0  &\alpha & 0\\
-\alpha & 0&0\\
\end{array}\right): \alpha\geq 0
\]
For each $\alpha\neq 0$, the automorphism group is 
\[
\left\{
\phi_{U}=
\left(\begin{array}{cccc}
a^2-c^2&-2ac&0\\
2ac&a^2-c^2&0\\
0&0&1
\end{array}\right): a,c\in \R,\ a^2+c^2=1
\right\}\cong S^1
\]
\end{proposition}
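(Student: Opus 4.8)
The plan is to exploit the simplicity of $\su(2)$ together with the fact that $\Aut(\su(2))$ acts on the space of cobrackets exactly as $SO(3)$ acts on $\R^3$. Since $\su(2)$ is simple, $H^1(\su(2),\Lambda^2\su(2))=0$, so every $1$-cocycle is a coboundary $\delta=\ad(r)$ with $r=\alpha u\w v+\beta v\w w+\gamma w\w u$, and the co-Jacobi condition is automatic, as computed above (because $[r,r]=2(\alpha^2+\beta^2+\gamma^2)u\w w\w v\in(\Lambda^3\g)^\g=\Lambda^3\g$). Hence classifying the Lie bialgebra structures reduces to classifying the triples $(\alpha,\beta,\gamma)$ up to the action of $\Aut(\su(2))$.

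First I would record the structural fact that makes the orbit picture transparent: conjugation by $\SU(2)$ realizes the full automorphism group $\Aut(\su(2))\cong SO(3)$, and under the $SO(3)$-equivariant isomorphism $\Lambda^2\su(2)\cong\su(2)$ (cross product / Hodge star) the induced action $(\phi\w\phi)$ on $r$ corresponds to the standard rotation action on the vector $(\alpha,\beta,\gamma)$. Consequently the orbits are the spheres $\alpha^2+\beta^2+\gamma^2=\mathrm{const}$, and $\sqrt{\alpha^2+\beta^2+\gamma^2}$ is a complete invariant. Concretely, I would realize this in the two reduction steps displayed above: a rotation $\phi_U$ with $b=d=0$ in the $u$--$v$ plane kills $\gamma$, and then one with $a=d=0$ kills $\beta$. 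This brings every cobracket to the normal form $\delta_\alpha$ corresponding to $r=\alpha u\w v$, and since $(\alpha,\beta,\gamma)$ and $(-\alpha,\beta,\gamma)$ lie on the same sphere the sign of $\alpha$ can be absorbed, so one may take $\alpha\geq 0$.

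I would then identify the automorphism group of $(\su(2),\delta_\alpha)$ for $\alpha\neq0$ as the stabilizer of $r=\alpha u\w v$ in $SO(3)$, namely the rotations fixing the $w$-axis, \ie the rotations of the $u$--$v$ plane, which form $SO(2)\cong S^1$. In the explicit parametrization this means demanding that $\phi_U$ preserve $\beta=\gamma=0$; the vanishing of the two relevant off-diagonal entries of $\delta'$ gives the equations $ab=cd$ and $ad=-bc$, which I would solve by the case analysis on whether $b$ vanishes. The solutions split into the genuine stabilizer $b=d=0$, $a^2+c^2=1$, yielding $\delta'=\delta$ and the stated $S^1$ of matrices, together with the extra family $a=c=0$, $d=\pm1$, which yields $\delta'=-\delta$ and thus re-confirms $\delta_\alpha\cong\delta_{-\alpha}$.

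The main obstacle is precisely this last step: the constraints $ab=cd$, $ad=-bc$ are quadratic and possess solutions beyond the intended $SO(2)$, so one must argue carefully that the extra solutions act by $\alpha\mapsto-\alpha$ rather than fixing $\delta_\alpha$. Handling these correctly is what simultaneously pins down the exhaustive list $\{\delta_\alpha:\alpha\geq0\}$ and the stabilizer $S^1$; everything else follows formally from the $SO(3)$-equivariance of the $\Lambda^2$-action and the resulting sphere-orbit description.
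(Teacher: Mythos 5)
Your proposal is correct, but its logical skeleton is genuinely different from the paper's, which never mentions $\mathrm{SO}(3)$ at all: the paper works entirely with explicit $\SU(2)$-conjugations, kills $\gamma$ and then $\beta$ by hand-picked parameter choices, and obtains both the non-isomorphy of distinct normal forms and the automorphism group by solving the quadratic system $ab=cd$, $ad=-bc$ case by case. Your equivariance argument --- $\Aut(\su(2))\cong \mathrm{SO}(3)$ together with an equivariant identification $\Lambda^2\su(2)\cong\su(2)$ (one can simply use the bracket map $X\w Y\mapsto[X,Y]$, which sends $u\w v,\ v\w w,\ w\w u$ to $w,\ u,\ v$) --- buys three things: (i) orbits are spheres, so the norm is a complete invariant, distinct $\alpha\geq 0$ give non-isomorphic bialgebras, and the sign of $\alpha$ is absorbed for free; (ii) the stabilizer of $r=\alpha u\w v$, $\alpha\neq 0$, is immediately the rotation group $\mathrm{SO}(2)\cong S^1$ about the $w$-axis; and (iii) it makes explicit the fact, used only tacitly in the paper, that \emph{every} automorphism of $\su(2)$ is a conjugation by $\SU(2)$ --- without this, the paper's computations would only bound the isomorphism classes from above and would not pin down the full automorphism group. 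Two small caveats. First, identifying bialgebra automorphisms of $(\su(2),\delta_r)$ with the stabilizer of $r$ uses injectivity of $r\mapsto \ad_{(-)}(r)$, \ie $(\Lambda^2\su(2))^{\su(2)}=0$; this holds (it is recorded in the paper's table) but deserves a sentence. Second, in your account of the explicit stabilizer check, the solutions of $ab=cd$, $ad=-bc$ outside the stabilizer form the whole circle $a=c=0$, $b^2+d^2=1$ (rotations by $\pi$ about axes in the $u$--$v$ plane), not only $d=\pm 1$; since all of them send $\delta$ to $-\delta$, your conclusions are unaffected.
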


\begin{remark}
For any $r=\alpha u\w v+\beta v\w w+\gamma w\w u\in \Lambda^2\su(2)$, 
$
[r,r]=2(\alpha ^2+\beta ^2+\gamma ^2)u\w w\w v\in(\Lambda^3\su(2))^{\su(2)}
$. 
We have two possibilities:
\begin{itemize}
\item $[r,r]=0$ if and only if $r=0$  if and only if   $\delta=0$.
Notice that the the only triangular structure is the trivial one.
\item $0\neq [r,r]$, and so $(\su(2),\delta)$ with $\delta (-)=\ad_{(-)}(r)$ is almost factorizable.
\end{itemize}
It was known (see \cite{AJ})
that $\su(2)$ admites a unique almost factorizable structure up to scalar multiple. 
This is in perfect agreement with the results of this section.
\end{remark}
\section{Lie bialgebra structures on $\sl(2,\R)$\label{seccionsl2}}

{\bf 1-Cocycles in $\sl(2,\R)$}.
The Lie algebra $\sl(2,\R)$ is usually presented as generated by $\{x,y,h\}$ with brackets
$[h,x]=2x$, $[h,y]=-2y$, $[x,y]=h$. 
But it is convenient to consider instead the ordered basis
$\{u,v, w\}$ of $\sl(2,\R)$  and $\{u\w v,v\w w,w\w u\}$ of $\Lambda^2\sl(2,\R)$, 
where $u=h/2$,  $v=(x+y)/2$ and $w=(x-y)/2$ {\em i.e.}
\[
u=\frac12
\left(\begin{array}{cc}
1&0\\
0&-1
\end{array}\right),\
v=\frac12
\left(\begin{array}{cc}
0&1\\
1&0
\end{array}\right),\
w=\frac12
\left(\begin{array}{cc}
0&1\\
-1&0
\end{array}\right)
\]
In this basis, the brackets are given by
$[u,v]=w,\
[v,w]=-u,\
[w,u]=-v
$. 
As in the case $\su(2)$, the Lie algebra
$\sl(2,\R)$ is simple, then every 1-cocycle is a 1-coboundary
and the general considerations made for that case hold here.
If $r=\alpha u\w v+\beta v\w w+\gamma w\w u$, then the 1-cocycle associated to it is
$\delta(z)=\ad_z(r)=[z,r]$ for any $z\in\sl(2,\R)$. Hence
\[\delta(u)=-\gamma u\w v-\alpha w\w u,\
\delta(v)=\beta u\w v+\alpha v\w w,\
\delta(w)=\gamma v\w w-\beta w\w u,\]
in matrix notation, 
$
\delta =\left(
\begin{array}{ccc}
-\gamma  & \beta &0\\
0  & \alpha &\gamma \\
-\alpha &0&-\beta 
\end{array}
\right)
$.
Co-Jacobi is automatically satisfied.

\noindent {\bf Automorphisms}.
In a similar way to the $\su(2)$ case, 
if $S\in \SL(2,\R)$, then conjugation by $S$ gives the automorphisms of the
Lie algebra $\sl(2,\R)$.
If $S=
\left(\begin{array}{cc}
a&b\\
c&d
\end{array}\right)$, 
with $a,b,c,d\in\R$,  $ad-bc=1$, then the automorphism
$\phi_{S}$ given by  $\phi_{S}(M)=SMS^{-1}$ for any $M\in\sl(2)$, maps
$u,v,w$ to
\[
\begin{array}{rcccccc}
\phi_{S}(u)&=&(ad+bc)u&+&(cd-ab)v&-&(cd+ab)w\\
\phi_{S}(v)&=&(bd-ac)u&+&\frac{a^2-b^2-c^2+d^2}{2}v&+&\frac{
a^2-b^2+c^2-d^2}{2}w\\
\phi_{S}(w)&=&-(bd+ac)u&+&\frac{a^2+b^2-c^2-d^2}{2}v&+&\frac{a^2+b^2+c^2+d^2}{2}w\\
\end{array}
\]
In matrix notation, 
\[
\phi_{S}=\left(
\begin{array}{ccc}
ad+bc    &         bd-ac                             &       -(bd+ac)\\
cd-ab     &\frac{a^2-b^2-c^2+d^2}{2}&\frac{a^2+b^2-c^2-d^2}{2}\\
-(cd+ab)&\frac{a^2-b^2+c^2-d^2}{2}&\frac{a^2+b^2+c^2+d^2}{2}
\end{array}\right)
\]
Let us denote by 
$\kappa =a^2+b^2+c^2+d^2$, 
$\kappa _{1,3}=-a^2+b^2-c^2+d^2$,
 $\kappa _{3,4}=a^2+b^2-c^2-d^2$, $\kappa _{1,4}=-a^2+b^2+c^2-d^2$, etc. 
{\em i.e.} the subindices point out the places of the negative signs.
If $\delta':=(\phi\w\phi)^{-1}\delta\phi$ then it  is given by $\delta'=$
\[
\left(
\begin{array}{ccc}
  \frac{ \alpha \kappa _{1,3} +2\beta ( ac - bd)+\gamma \kappa _{1,4}}{2}&{}^{ -\alpha (ab+cd)+\beta (bc+ad) +\gamma (-ab+cd)}& {}^0\\
{}^0&\frac{\alpha \kappa -2\beta (ac+bd)+c\kappa _{3,4}}{2}&- \frac{ \alpha\kappa _{1,3}
+2\beta (ac-bd)+\gamma\kappa _{1,4}}{2}\\
 \frac{-\alpha \kappa +2\beta (ac+bd)-\gamma\kappa _{3,4}}{2}& {}^0&{}_{\alpha (ab+cd)-\beta(ad+bc) + \gamma (ab-cd)}
\end{array}\right)
\]
If $a=d$ and $c=-b$, with $ad-bc=a^2+b^2=1$, we get
\[
\delta'=
\left(
\begin{array}{ccc}
 -2\beta ab-\gamma (a^2-b^2)&\beta (a^2-b^2) -2 \gamma ab& 0\\
0&\alpha & 2\beta ab+\gamma (a^2-b^2)\\
-\alpha &0&-\beta (a^2-b^2) +2\gamma ab 
\end{array}\right)
\]
Because $a^2+b^2=1$, there exists $\theta\in\R$ such that $a=\cos(\theta)$ and $b=\sin(\theta)$. It
follows that
\[
\begin{array}{rcc}
\beta '&=&\beta (a^2-b^2) -2 \gamma ab=\beta \cos(2\theta)-\gamma \sin(2\theta)\\
\gamma '&=&2\beta ab+\gamma ( a^2-b^2)=\beta \sin(2\theta)+\gamma \cos(2\theta)
\end{array}
\]
Namely, the pair $(\beta ,\gamma)$ transform as a rotation, so we can change it, for example,  into $(\sqrt{\beta ^2+\gamma ^2},0)$. In other words, we can assume that $\gamma =0$ and that $\beta \geq 0$.

\

Now if 
$\delta =\left(
\begin{array}{ccc}
0 & \beta &0\\
0  & \alpha &0\\
-\alpha  &0&-\beta
\end{array}
\right)$ with $\beta\geq 0$, we can take an automorphism with $d=a$, $b=c$
and $ad-bc=a^2-b^2=1$. Such $a$, $b$ may be writen in the form
$a=\cosh \theta$, $b=\sinh\theta$ for a $\theta\in\R$. Under an automorphism like this, $\delta$
changes into
\[
\delta'=
\left(
\begin{array}{ccc}
0&\beta (a^2+b^2)-2\alpha ab&0\\
0&-2\beta ab+\alpha (a^2+b^2)&0\\
2\beta ab-\alpha (a^2+b^2)&0&-\beta(a^2+b^2)+2\alpha ab\\
 \end{array}\right)
\]
This says, for the coefficients of $\delta '$, that $\gamma '=0$, 
\[
\begin{array}{rcc}
\beta '&=&\beta (a^2+b^2)-2\alpha ab=\beta \cosh(2\theta)-\alpha \sinh(2\theta)\\
\alpha '&=&-2\beta ab+\alpha (a^2+b^2)=-\beta\sinh(2\theta)+\alpha \cosh(2\theta)
\end{array}
\]
There are three possibilities:
\begin{enumerate}
\item $\beta ^2-\alpha ^2>0$. In this case, we can choose $\theta$ such that $\alpha '=0$.
\item $\beta ^2-\alpha ^2<0$. In this case, we can choose $\theta$ such that $\beta '=0$.
\item $\beta ^2-\alpha ^2=0$. In this case $\alpha =\pm \beta $. But the automorphism
with $a=d=0$, $b=1=-c$ changes $\alpha '=\alpha $ and $\beta '=-\beta $. So, 
we can assume $\beta =\alpha $.
\end{enumerate}


\noindent {\em Case $\alpha =0$, $\beta\neq 0$}.
In this case, under a general automorphism, $\delta$ changes into
\[
\delta'=\left(
\begin{array}{ccc}
    \beta (ac-bd)& \beta (ad+bc)&0\\
0     &-\beta (ac+bd) & -\beta (ac-bd)\\
\beta (ac+bd)&0& -\beta (ad+bc)
\end{array}\right)
\]
An automorphism preserving $\alpha  '=0=\gamma '$ must satisfy
$ac=0=bd$. 
If $a\neq 0$ then $c=0$; but  $ad-bc=1$, so $d\neq 0$,
$b=0$ and $d=a^{-1}$; in this case $\delta'=\delta$.

If $a=0$, then $b\neq 0$ since $ad-bc=1$; so $d=0$ and $c=-b^{-1}$. 
In this case $\delta'=-\delta$. We conclude that $\beta$ may be chosen up to a sign,
so a list of representatives of the isomorphism classes is given by
\[\left\{
\delta _\beta=\left(
\begin{array}{ccc}
    0& \beta &0\\
0&0&0\\
0&0&-\beta
\end{array}\right): \beta >0\right\}
\]
For each of these, the automorphism group is
$
\left\{\phi_{S}: S=\left(
\begin{array}{ccc}
   a&0\\
0&a^{-1}\end{array}\right); 0\neq a\in\R\right\}
$.
Notice that $\phi_{S}=\phi_{-S}$, so this group is connected, and,
it is isomorphic to $(\R ,+)$.

\

\noindent {\em Case $\alpha\neq 0$, $\beta =0$:}
Under an automorphism $\phi_S$, $\delta=\left(
\begin{array}{ccc}
    0& 0&0\\
0&\alpha &0\\
-\alpha &0&0
\end{array}\right)\mapsto
\delta'=$
\[=
\frac{\alpha}{2}\left(
\begin{array}{ccc}
-a^2 + b^2 - c^2 + d^2& -2(ab +cd)& 0\\
0& a^2 + b^2 + c^2 + d^2& a^2 - b^2 + c^2 - d^2)\\
-(a^2 + b^2 - c^2 + d^2)& 0& 2 (ab+cd)
\end{array}\right)
\]
If $\beta '=\gamma '=0$ then 
\begin{eqnarray}-a^2 + b^2 - c^2 + d^2=0\\
ab +cd=0\label{eq2}\\
ad -bc=1 
\end{eqnarray}
If $c=0$, then
\[-a^2 + b^2 + d^2=0,\quad
a b  =0,\quad
a d =1\]
The last two equations imply $a\neq 0$, $b=0$, $d=a^{-1}$ and the first one gives
$a^4=1$ then $a=\pm 1$. In both cases, the automorphism acts trivially.

\noindent If $c\neq 0$ we solve $d=-ab/c$ from equation (\ref{eq2}) and the other equations transform into
\[
\frac{(a^2+c^2)(c^2-b^2)}{c^2}=0,
\quad
-b\frac{a^2+c^2}{c}=1
\]
Since $c\neq 0$, the first equation is equivalent to $c^2=b^2$ then $b=\pm c$. But if
$b=c$,
the last equation gives $-(a^2+c^2)=1$, which is absurd, so $b=-c$, hence  $a^2+c^2=1$ and $d=a$.
If this is the case, $\delta'=\delta$. The set of isomorphism classes of this type is the 1-parameter family of representatives given by
\[\left\{\delta _\alpha=
\alpha\left(
\begin{array}{ccc}
0&0&0\\
0&1&0\\
-1&0&0
\end{array}\right): \alpha\in\R,\ \alpha\neq 0\right\}
\]
For each of theses classes, the automorphisms group consists of
\[
\left\{\phi_{S}: S=\left(
\begin{array}{ccc}
   a&-c\\
c&a\end{array}\right),
 a^2+c^2=1\right\}
=
\left\{\left(
\begin{array}{ccc}
   a^2-c^2&-2ac&0\\
2ac&a^2-c^2 & 0\\
0 & 0 & 1
\end{array}\right):
a^2+c^2=1\right\}
\]


\noindent {\em Case $\alpha =\beta$:}
If  $\delta=\left(
\begin{array}{ccc}
    0&\alpha &0\\
0&\alpha &0\\
-\alpha &0&-\alpha
\end{array}\right)
$
then under an isomorphism $\Phi _S$ we get
\[\delta' =
\alpha
\left(\begin{array}{ccc}
\frac{-a^2 + b^2 + 2      ac - c^2 - 2 bd + d^2}{2}&-2(a-c)(b-d)& 0\\
0&\frac{ a^2 + b^2 - 2      ac + c^2 - 2 bd+d^2}{2}&
\frac{a^2 - b^2 - 2      ac + c^2 +2 bd  -d^2}{2}\\
\frac{- a^2 - b^2 + 2      ac - c^2 + 2 bd-d^2 }{2}& 0& 2 (a -c)(b - d)\\
\end{array}
\right)
\]
If one wants to preserve $\alpha '=\beta '$, $\gamma '=0$ then we have two possibilities:
$\alpha =0$, or
\begin{eqnarray*}
-a^2 +b^2 + 2 ac - c^2 - 2 bd + d^2&=&0,\\
a^2 + b^2 - 2 ac + c^2 - 2 bd + d^2&=&-2  (a-c) (b-d)
\end{eqnarray*}
which 
are equivalent to
$(b-d)^2=-(a-c)(b-d)$ and $(a-c)^2=(c-a)(b-d)$, 
then $c-a=b-d$, so, setting $d=a-c+b$ we get 
$\delta'=\alpha (a-c)^2
\left(
\begin{array}{ccc}
0&1&0\\
0&1&0\\
-1&0&-1
\end{array}
\right)
$.
The condition on $a,b,c,d$ is $ad-bc=1$, then
\[
\begin{array}{ccccc}
1 &=& ad-bc &=& a(a-c+b)-bc\\
&=& a(a+b)-c(a+b) &=&  (a+b)(a-c)
\end{array}
\]
So, there is no restriction on the value of $a-c$, except being different from zero. 
Inside its isomorphism class, $\alpha $ is determined up to positive scalar and 
it is enough to take $\alpha =\pm 1$.
Hence, there are only three isomorphism classes of triangular Lie bialgebras on $\sl(2)$:
\[
\delta _0=\left(\begin{array}{ccc}
0&0&0\\
0&0&0\\
0&0&0
\end{array}
\right);\
\delta _1=\left(\begin{array}{ccc}
0&1&0\\
0&1&0\\
-1&0&-1
\end{array}
\right);\
\delta _{-1}=
\left(\begin{array}{ccc}
0&-1&0\\
0&-1&0\\
1&0&1
\end{array}
\right)
\]
\begin{proposition}
The set of isomorphism classes of Lie bialgebras on $\sl(2,\R)$ are given by the following representatives
\[
\hbox{ Factorizable:\ }
\delta _{\beta}=\left(
\begin{array}{ccc}
    0& \beta  &0\\
0&0&0\\
0&0&-\beta 
\end{array}\right): \beta >0;
\quad
\delta _{\alpha}=\left(
\begin{array}{ccc}
0&0&0\\
0&\alpha &0\\
-\alpha &0&0
\end{array}\right): \alpha \in\R,\ \alpha \neq 0.
\]
\[
\hbox{Triangular:\ }
\delta =0;\quad 
\delta _1=\left(\begin{array}{ccc}
0&1&0\\
0&1&0\\
-1&0&-1
\end{array}
\right)
;\quad 
\delta _{-1}=\left(\begin{array}{ccc}
0&-1&0\\
0&-1&0\\
1&0&1
\end{array}
\right).
\]
Besides, the Lie bialgebra corresponding to $\delta _{\beta}$ verifies
$\g\cong\g ^{cop} $. On the other hand,
although $\delta _{-1}=-\delta _{1}$, the corresponding Lie bialgebras are non isomorhphic.

\end{proposition}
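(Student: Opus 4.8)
The plan is to exploit that, since $\sl(2,\R)$ is simple, every cobracket is a coboundary $\delta=\ad_{(-)}(r)$ with $r=\alpha u\w v+\beta v\w w+\gamma w\w u$, that co-Jacobi holds for all $r$, and that Lie bialgebra isomorphisms are exactly the conjugations $\phi_S$, $S\in\SL(2,\R)$, whose action on $(\alpha,\beta,\gamma)$ is the one computed above. Thus the classification reduces to describing the orbits of $\Aut(\sl(2,\R))=\mathrm{Ad}\,\SL(2,\R)$ on the parameter space $\{(\alpha,\beta,\gamma)\}$ together with the residual stabilizers. I would organize the reduction around the indefinite quadratic invariant whose level sets control these orbits: the ``rotation'' automorphisms ($a=d$, $c=-b$, $a^2+b^2=1$) rotate the pair $(\beta,\gamma)$, so one may assume $\gamma=0$ and $\beta\ge 0$; the ``boost'' automorphisms ($a=d$, $b=c$, $a^2-b^2=1$) then act hyperbolically on $(\alpha,\beta)$, preserving $\beta^2-\alpha^2$. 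Since the whole chain of reductions is exhaustive, this splits the analysis into the three cases $\beta^2-\alpha^2>0$, $<0$, $=0$, exactly as in the computation above, and proves that the displayed list is complete.

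In the case $\beta^2-\alpha^2>0$ one normalizes to $\alpha=0$, $\beta>0$, giving the factorizable family $\delta_\beta$; here $[r,r]\neq 0$, so $\delta_\beta$ is not triangular, and the automorphism $S=\left(\begin{smallmatrix}0&b\\-b^{-1}&0\end{smallmatrix}\right)$ sends $\delta_\beta$ to $-\delta_\beta$, which is precisely the statement $\g\cong\g^{cop}$ for this family. In the case $\beta^2-\alpha^2<0$ one normalizes to $\beta=0$, $\alpha\neq 0$, giving $\delta_\alpha$; the residual stabilizer is the rotation circle $a^2+c^2=1$, which fixes $\alpha$, so here \emph{both} signs of $\alpha$ survive as distinct classes (in contrast to the first family, $\delta_\alpha\not\cong\delta_{-\alpha}$). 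In the null case $\beta^2=\alpha^2$ one first applies the flip $a=d=0$, $b=1=-c$ to pass from $\alpha=-\beta$ to $\alpha=\beta$, obtaining the triangular cobrackets $\delta_0,\delta_1,\delta_{-1}$.

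The only delicate point — and the main obstacle — is to show that $\delta_1$ and $\delta_{-1}=-\delta_1$ are genuinely non-isomorphic, since in the other families passing to the opposite cobracket does not leave the isomorphism class. The numerical invariants of Section~1 are powerless here: because $\delta_{-1}=-\delta_1$ we have $\D_{-1}=-\D_1$ for $\D=[-,-]\circ\delta$, and one checks that $\D_1$ is nilpotent, so $-\D_1$ is conjugate to $\D_1$ and $\tr(\D)$, $\det(\D)$, the characteristic polynomial and the Jordan form all agree on the two. Instead I would track the exact effect of the stabilizer in the null case: requiring an automorphism to preserve the normal form forces $c-a=b-d$, and together with $ad-bc=1$ this yields $(a+b)(a-c)=1$ and $\delta'=\alpha\,(a-c)^2\,M_0$, where $M_0=\left(\begin{smallmatrix}0&1&0\\0&1&0\\-1&0&-1\end{smallmatrix}\right)$. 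Since the multiplier $(a-c)^2$ is a nonzero square, the \emph{sign} of $\alpha$ is an orbit invariant: the orbit of $\delta_1$ is the set of positive multiples of $M_0$ and that of $\delta_{-1}$ the negative multiples, and these are disjoint. Conceptually this reflects that $\mathrm{Ad}\,\SL(2,\R)$ is the connected Lorentz group $\mathrm{SO}^+(2,1)$, which preserves the time-orientation of the light cone and therefore cannot interchange the two antipodal null rays through $r_1$ and $-r_1$. Reading off the residual stabilizers in each of the cases then records the Lie bialgebra automorphism groups and completes the proof.
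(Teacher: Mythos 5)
Your proposal reproduces the paper's own proof essentially step by step: the same coboundary setup, the same use of rotations to normalize $\gamma=0$, $\beta\geq 0$, the same boost-invariant $\beta^2-\alpha^2$ and resulting trichotomy, the same stabilizer computations in the two factorizable cases, and the same key argument in the null case, namely that an automorphism preserving the normal form rescales $\delta$ by the positive factor $(a-c)^2$, so that the sign of $\alpha$ appears as an orbit invariant. Your two additions (that $\D_{\pm 1}$ is nilpotent, so the invariants of Section 1 cannot separate $\delta_1$ from $\delta_{-1}$, and the Lorentzian reading of the orbit picture) are correct and not in the paper.

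There is, however, a genuine gap, and it is one your write-up turns into an explicit premise: the assertion that Lie bialgebra isomorphisms of $\sl(2,\R)$ are \emph{exactly} the conjugations $\phi_S$ with $S\in\SL(2,\R)$. A Lie bialgebra isomorphism is any Lie algebra automorphism intertwining the cobrackets, and $\mathrm{Ad}\,\SL(2,\R)\cong\mathrm{SO}^+(2,1)$ is only the identity component of $\Aut(\sl(2,\R))\cong\mathrm{PGL}(2,\R)\cong\mathrm{SO}(2,1)$, which has two components. Concretely, conjugation by $\left(\begin{smallmatrix}1&0\\0&-1\end{smallmatrix}\right)$ is a Lie algebra automorphism $\phi$ of $\sl(2,\R)$ given by $u\mapsto u$, $v\mapsto -v$, $w\mapsto -w$; it is not of the form $\phi_S$ with $S\in\SL(2,\R)$ (such an $S$ would commute with $u$, hence be diagonal, $S=\mathrm{diag}(s,s^{-1})$, and $\mathrm{Ad}(S)v=-v$ would force $s^2=-1$). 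On the coefficients of $r$ it acts by $(\alpha,\beta,\gamma)\mapsto(-\alpha,\beta,-\gamma)$; composing with the paper's own inner flip $(\alpha,\beta,\gamma)\mapsto(\alpha,-\beta,-\gamma)$, it carries $(1,1,0)$ to $(-1,-1,0)$, i.e.\ $\delta_1$ to $\delta_{-1}$, and it carries $(\alpha,0,0)$ to $(-\alpha,0,0)$, i.e.\ $\delta_\alpha$ to $\delta_{-\alpha}$ in the second factorizable family. Your closing conceptual remark pinpoints exactly what is being assumed: the argument uses that the acting group is the time-orientation-preserving group $\mathrm{SO}^+(2,1)$, but the full automorphism group $\mathrm{SO}(2,1)$ does reverse time-orientation and does interchange the two antipodal null rays. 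So the step ``the sign of $\alpha$ is an orbit invariant'' is valid only for orbits under the identity component; under the full group $\Aut(\sl(2,\R))$ the non-isomorphism claims (for $\delta_1$ versus $\delta_{-1}$, and for $\delta_\alpha$ versus $\delta_{-\alpha}$) fail. This defect is inherited from the paper's proof, which restricts to $S\in\SL(2,\R)$ without justification; a complete argument must either prove that every automorphism is of this form (it is not) or redo the orbit analysis under the full automorphism group, which changes the final list.
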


\begin{remark}
For any $r=\alpha u\w v+\beta v\w w+\gamma w\w u\in \Lambda^2\sl(2,\R)$, 
$
[r,r]=2(\alpha ^2-\beta ^2-\gamma ^2)u\w v\w w\in \Lambda^3\sl(2,\R)
=(\Lambda^3\sl(2,\R))^{\sl(2,\R)}
$.
We have the possibilities:
\begin{itemize}
\item $[r,r]=0$ if and only if $\alpha ^2-\beta ^2-\gamma ^2=0$. So, unlike the $\su(2)$ case, there are non trivial triangular structures, explicitly
given by $\delta _{\pm 1}$ with $\alpha=\beta =\pm 1$.
\item $0\neq [r,r]$, and so $(\sl(2,\R),\delta)$ with $\delta (-)=\ad_{(-)}(r)$ is factorizable.
\end{itemize}
The  factorizable structures for $\sl(2,\R)$, up to a scalar multiple, are well known (see \cite{AJ}).
This is in perfect agreement with the results of this section.
\end{remark}


\begin{thebibliography}{99}



\bibitem[A-J]{AJ} N.  Andruskiewitsch, A.P. Jancsa, {\em On Simple Real Lie Bialgebras.} I.M.R.N. 2004 Nro.3.

\bibitem[G]{G} X. Gomez, {\em Classification of three-dimensional Lie bialgebras.}
J. Math. Phys. 41, No.7, 4939-4956 (2000). 

\bibitem[G-O-V]{GOV}
Gorbatsevich, V.V.; Onishchik, A.L.; Vinberg, E.B, {\em  Structure of Lie groups and Lie algebras, 
Lie groups and Lie algebras III}, 
 Encyclopedia of Mathematical Sciences 41, Springer-Verlag, Berlin,  1994.

\bibitem[K-S]{K-S} L. Korogodsky, Y. Soibelman, {\em Algebras of functions on quantum groups. I}, Amer. Math. Soc., 1997.

\bibitem[RA-H-R]{RA-H-R}  Rezaei-Aghdam, A.; Hemmati, M.; Rastkar, A.R.
Classification of real three-dimensional Lie bialgebras and their Poisson-Lie groups. 
[J] J. Phys. A, Math. Gen. 38, No. 18, 3981-3994 (2005).

\end{thebibliography}
\end{document}